\newif\ifCM\CMtrue 
\newif\iffurther %
\newcommand\alg[1]{{{{{\bf{#1}}}}}}
\def\isom{{\;\cong\;}} 
\newcommand\subjectto{{\,|\ }}
\newcommand\tensor[1][{}]{{\,\otimes_{\!#1}\,}}
\newcommand\Tensor[1][{}]{{\,\bigotimes}}
\newcommand\Cref[1]{{Corollary~\ref{#1}}}
\newcommand\Pref[1]{{Proposition~\ref{#1}}}
\newcommand\Rref[1]{{Remark~\ref{#1}}}
\newcommand\Tref[1]{{Theorem~\ref{#1}}}
\newcommand\Fref[1]{{Figure~\ref{#1}}}
\newcommand\Sref[1]{{Section~\ref{#1}}}
\newcommand\Ssref[1]{{Subsection~\ref{#1}}}
\newcommand\M[1][2]{{\operatorname{M}_{#1}}}
\newcommand\rcfin[1][F]{{\operatorname{M}_{\mbox{\tiny{rc-fin}}}(#1)}}
\def\Z{\mathbb Z}
\def\CP{\mathcal{P}}
\def\N{{\mathbb{N}}}
\def\id{{\operatorname{id}}}
\def\ann{{\operatorname{ann}}}
\newcommand\Br[1]{{\operatorname{Br}^{#1}}}
\newcommand\Brp[2][p]{{{}_{#1}\!\operatorname{Br}^{#2}}}
\long\def\forget#1\forgotten{}
\long\def\hide#1\hidden{{\bf{[}}{\tiny{#1}}{\bf{]}}}
\newcommand{\set}[1]{{\left\{#1\right\}}}
\newcommand\suchthat{{\,:\ \,}}
\def\hra{{\hookrightarrow}}
\def\lam{{\lambda}}
\newcommand\eq[1]{{(\ref{#1})}}
\newcommand\ideal[1]{{\left<{#1}\right>}}
\newcommand\sg[1]{{\ideal{#1}}}
\def\co{{\,{:}\,}}
\def\ra{{\rightarrow}}
\def\sub{\subseteq}
\renewcommand{\span}{\operatorname{span}} %
\def\({\left(}
\def\){\right)}
\newcommand\End{{\operatorname{End}}}
\newcommand\divides{{\,|\,}}
\newcommand\CC[2][F]{\ \!\mathcal {C}^{#2}_{#1}} 
\newcommand\lcm{{\operatorname{lcm}}}
\newcommand\abs[1]{{\left|{#1}\right|}}
\newcommand\fd{{finite dimensional}}
\newcommand\fdcsa[1][] {{\fd{} central simple {#1}algebra}}
\newcommand\rank{{\operatorname{rank}}}
\newif\ifXY 
\newtheorem{thm}{Theorem}[section] 
\newtheorem{cor}[thm]{Corollary}
\newtheorem{defn}[thm]{Definition}
\newtheorem{exmpl}[thm]{Example}
\newtheorem{prop}[thm]{Proposition}
\newtheorem{rem}[thm]{Remark}
\newtheorem{construction}[thm]{Construction}
\newcommand\smat[4]{{\left(\begin{array}{cc}\!{#1}\!&\!{#2}\!\\[-0.05cm] \!{#3}\!&\!{#4}\!\end{array}\right)}}
\DeclareMathOperator{\Hom}{Hom} %
\newcommand\dlim[1][]{{\if!#1!{\underrightarrow{\lim}\;\!}\else{\varinjlim_{\:\!#1}}\fi}}   
\def\nn{{\bf{n}}}
\def\mm{{\bf{m}}}
\def\DM{{\mathcal{E}}}
\begin{document}

\title{The algebra of supernatural matrices}
\def\TBO{Tamar Bar-On}\def\SG{Shira Gilat}\def\UV{Uzi Vishne}\def\EM{Eliyahu Matzri}
\def\UVemail{vishne@math.biu.ac.il}
\def\TBOemail{tamarnachshoni@gmail.com}
\def\SGemail{shira.gilat@live.biu.ac.il}
\def\EMemail{elimatzri@gmail.com}
\address{Bar Ilan University, Ramat Gan 52900, Israel}

\author{\TBO}\email{\TBOemail}
\author{\SG}\email{\SGemail}
\author{\EM}\email{\EMemail}
\author{\UV}\email{\UVemail}
\thanks{The authors are partially supported by Israeli Science Foundation grants no.
	630/17 
	and 1994/20. 
}
\keywords{supernatural matrices, Leavitt path algebras, deep matrices, locally finite dimensional algebras, central simple algebras, direct limit, infinite Brauer monoid, matrix cancellation}

\subjclass[2020]{16D30 (primary), 16S50, 16S88, 46L99 (secondary).}

\maketitle

\begin{abstract}
The algebra of supernatural matrices is a key example in the theory of locally finite central simple algebras, which studied in a previous paper of the authors (\cite{Local}). It is also a stand-alone object admits a rich study and various connections to other fields. The goal of this paper is to expose some new information about supernatural matrices, mainly in terms of the "inner" ways to identify such algebras, and their appearance as minimal solutions to equations of the form $M_n(A)\cong A$. Viewing a natural representation of this algebra, we show that supernatural matrices generalize both McCrimmon's deep matrices algebra and $m$-petal Leavitt path algebra. We also study their simple representations.
\end{abstract}

\section*{Introduction}

In the paper \cite{Local} the authors developed a comprehensive theory of algebras over a field $F$, which are locally $F$-central simple of finite dimension. Namely, every finitely generated subalgebra is contained in a finite dimensional simple subalgebra whose center is equal to~$F$.
Basic examples of such algebras are the locally matrix algebras, and in particular, the countably generated locally matrix algebras, which we refer as the \textit{supernatural matrices}- as they are defined by their "supernatural" degree. A \textbf{supernatural number} $\nn$, as presented in \cite{ribes2000profinite} in the context of the order of a profinite group, is a formal product of the form $\prod p^{\alpha_p}$, where $p$ ranges over the set of natural primes and where each $\alpha_p$ belongs to $\mathbb{N}\cup \{\infty\}$. Such numbers are also called \textbf{Steinitz numbers}, see, for example, \cite{bezushchak2020unital}. Locally martix algebras are simply direct (and hence injective) limits of matrix algebras over the same field, over arbitrary directed systems and with arbitrary maps. The supernatural matrix algebra of degree $\nn$ is defined as the direct limit over the set of natural divisors of $\nn$, with the diagonal maps $M_m(F)\to M_n(F)$, $x\to x\otimes 1=\operatorname{diag}(x)$, whenever $m|n$. In \cite{Local} it was proved that every countably generated locally matrix algebra is in fact a supernatural algebra, which is uniquely determined by its supernatural degree $\nn$, defined as the lcm of the degrees of the matrix subalgebras.   

Locally matrix algebras have already been studied in 1942 by Kurosh in his paper \cite{kurosh1942direct}, were they have been called "locally marticial rings", and in 1979 in the book \cite{goodearl1979neumann}, were they have been called "ultramatricial rings". Recently, there have been a lot of research regarding different aspects of locally matrix algebras, such as their primary decomposition and Morita equivalent, in the various of papers by Bezushchak and Bogdana. For example, see \cite{bezushchak2020morita,bezushchak2020primary,bezushchak2020unital,bezushchak2021derivations}. Supernatural matrices also have a key role when considered over $\mathcal{C}$ in the theory of $\mathcal{C}^*$ algebras. See, for example \cite{davidson1996c}.

In the current paper we study several new aspects of supernatural matrices, and point out their connection to other famous families of algebras. The paper is organized as follows: In Section 1  we describe their matrix units and present a simple module which allows a concrete description of the supernatural matrices inside the algebra of row- and column-finite matrices. In Section 2 we present their special rule in the class of locally finite central simple algebras. In Section 3 we study several forms of the equation  $\M[p](F) \tensor X \isom X$ and the connections between them. McCrimmon's algebra of deep matrices is the subject of Section 4. We show that supernatural matrices of the form $\M[m^{\infty}](F)$ are a  homomorphic image of the algebra of balanced deep matrices. In this sense, supernatural matrices~$\M[\nn](F)$  generalize deep matrices. A similar connection is presented to Leavitt path algebra (\Sref{sec:LPA}), leading to a new description of the Leavitt path algebra $L_F(1,m)$ as rectangular recurrent infinite matrices. We conclude with a description of elementary gradings of supernatural matrices. Eventually, in Section 6 we give some results regarding simple presentations of supernatural matrix algebras.

\medskip
We thank Be'eri Greenfeld for his helpful discussions on Leavitt path algebras.
\section{Identifying supernatural matrices}\label{s:snmat}
\subsection{Supernatural matrix units}\label{ss:units}

The algebra of $n$-by-$n$ matrices is generated by matrix units $e_{ij}$, subject to the relations $e_{ij}e_{i'j'} = \delta_{ji'} e_{ij'}$ and $\sum_{i=0}^{n-1} e_{ii} = 1$ (we find it convenient to start indexation from zero). In order to compare matrices of varying dimensions, we adopt a sequential notation for the indices.

Fix a strictly increasing sequence of natural numbers greater then 1 $n_1 \divides n_2 \divides n_3 \divides \cdots$, whose least common multiple is a prescribed supernatural number~$\nn$; and artificially set $n_0 = 1$. We present a natural number $k_0+k_1n_1+k_2n_2+\cdots+k_{t-1}n_{t-1}$ by the ``$\nn$-adic representation'' $k_{t-1}\cdots k_0$, where the digits satisfy $0 \leq k_i < n_{i+1}/n_i$. By definition each word $w = k_{t-1}\cdots k_0$ has a well-defined length $\abs{w} = t$, in spite of potential leading zeros.

Interpreting indices by the $\nn$-adic representation, $e_{k_{t-1}\cdots k_0,k_{t-1}'\cdots k_0'}$ are all the matrix units of $\M[n_t](F)$.

\begin{thm}
	The supernatural matrix algebra $\M[\nn](F)$ is generated (as a unital algebra) by the matrix units $e_{w,w'}$, where $w,w'$ are $\nn$-adic sequences of the same length, subject to three families of relations:
	\begin{equation}\label{dmrel1}
		e_{uu'}e_{ww'} = \delta_{u',w} e_{uw'}
	\end{equation}
	whenever $\abs{u} = \abs{u'} = \abs{w} = \abs{w'}$;
	\begin{equation}\label{dmrel0}
		e_{\emptyset\emptyset} = 1;
	\end{equation}
	where $\emptyset$ is the empty word; and
	\begin{equation}\label{dmrel3}
		e_{ww'} = \sum_{i=0}^{n_{t+1}/n_t-1} e_{iw,iw'}
	\end{equation}
	for every $w,w'$ of the same length $t \geq 0$. 
\end{thm}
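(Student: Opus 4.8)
The plan is to realize $\M[\nn](F)$ concretely as the union (directed colimit) of the finite subalgebras $\M[n_t](F)$ and then check that the stated presentation describes exactly this colimit. First I would fix, once and for all, the compatible system of identifications $\M[n_t](F) \hookrightarrow \M[n_{t+1}](F)$ given by $x \mapsto \operatorname{diag}(x)$, i.e. by the $\nn$-adic bookkeeping in the excerpt: under this embedding the matrix unit $e_{w,w'}$ of $\M[n_t](F)$ (indexed by length-$t$ $\nn$-adic words) is sent to $\sum_{i=0}^{n_{t+1}/n_t-1} e_{iw,iw'}$, since prepending a leading digit $i$ to both words runs over the diagonal blocks. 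So relation \eqref{dmrel3} is forced, and relations \eqref{dmrel1}, \eqref{dmrel0} are simply the usual matrix-unit relations inside each $\M[n_t](F)$, with \eqref{dmrel0} coming from $n_0 = 1$. This shows $\M[\nn](F)$ is generated by the $e_{w,w'}$ and that all three families of relations hold.

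The substance is the converse: that no further relations are needed. Let $B$ be the abstract unital $F$-algebra presented by the generators $e_{w,w'}$ (over all pairs of equal-length $\nn$-adic words) modulo \eqref{dmrel1}--\eqref{dmrel3}; there is a surjection $\pi \colon B \twoheadrightarrow \M[\nn](F)$ by the previous paragraph, and I must show it is injective. The key step is to produce, for each $t \geq 0$, a unital homomorphism $\M[n_t](F) \to B$ sending $e_{w,w'} \mapsto e_{w,w'}$: this is legitimate because the images $\{e_{w,w'} : \abs{w} = \abs{w'} = t\}$ in $B$ satisfy the defining relations of $\M[n_t](F)$, namely \eqref{dmrel1} for length $t$ together with $\sum_{\abs{w}=t} e_{w,w} = 1$ — and the latter follows by iterating \eqref{dmrel3} down from \eqref{dmrel0}: $1 = e_{\emptyset\emptyset} = \sum_i e_{i,i} = \cdots = \sum_{\abs{w}=t} e_{w,w}$. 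Relation \eqref{dmrel3} moreover says precisely that these homomorphisms are compatible with the diagonal embeddings $\M[n_t](F) \to \M[n_{t+1}](F)$, so they assemble into a homomorphism $\varinjlim_t \M[n_t](F) = \M[\nn](F) \to B$. One then checks this is a two-sided inverse to $\pi$ on generators, hence $B \cong \M[\nn](F)$.

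For this to close up I also need the small lemma that every element of $B$ is an $F$-combination of the generators $e_{w,w'}$ themselves (so that $B$ is spanned by the matrix units and the homomorphism out of the colimit is surjective): using \eqref{dmrel3} one can always raise two generators to a common length before multiplying, and then \eqref{dmrel1} collapses the product to a single $e$ or to $0$; an induction on word length, padding shorter words with leading zeros via \eqref{dmrel3}, handles sums of such monomials. The main obstacle is bookkeeping rather than conceptual: one must verify that the $\nn$-adic digit conventions ($0 \le k_i < n_{i+1}/n_i$, leading zeros allowed, indices read right-to-left) make the block decomposition of the diagonal embedding match \eqref{dmrel3} on the nose, and that padding to a common length is unambiguous — i.e. that the colimit of the $\M[n_t](F)$ along these embeddings really is the algebra called $\M[\nn](F)$, which is exactly the definition recalled in the Introduction. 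No single step is deep; the care is in making the two directions of $B \cong \M[\nn](F)$ genuinely inverse and in the length-normalization arguments.
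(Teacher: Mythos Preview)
Your proposal is correct and follows essentially the same line as the paper's proof: derive $\sum_{|w|=t} e_{w,w}=1$ from \eqref{dmrel0} and \eqref{dmrel3} by induction, observe that together with \eqref{dmrel1} this presents $\M[n_t](F)$ at each level, and read \eqref{dmrel3} as exactly the diagonal embedding $\M[n_t](F)\hookrightarrow\M[n_{t+1}](F)$, so the presentation is that of the direct limit. The paper's write-up is terser---it does not spell out the inverse map $\M[\nn](F)\to B$ or the spanning argument---but the underlying argument is the same as yours.
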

\begin{proof}
	Induction on $t$, based on \eq{dmrel0} which is the case $t = 0$, \eq{dmrel3} implies
	\begin{equation}\label{dmrel2}
		\sum_{\abs{w} = t} e_{ww} = 1.
	\end{equation}
	
	Now the relations \eq{dmrel1} and \eq{dmrel2}, for indices of length $t$, define the matrix algebra $\M[n_t](F)$; and \eq{dmrel3} identifies its matrix units as sums of the respective matrix units in the $n_{t+1}/n_t$ blocks of size $n_t$ composing the matrices of size $n_{t+1}$, thereby defining the embedding $\M[n_t](F) \hra \M[n_{t+1}](F)$. The relations thus define the direct limit $\dlim \M[n_t](F) = \M[\nn](F)$.
\end{proof}

The relation \eq{dmrel3} shifts generators deeper into the algebra, which when plugged in \eq{dmrel1} provides a multiplication rule for generators of arbitrary lengths, namely
\begin{equation}\label{dmall}
	e_{u,u'} e_{w,w'} = \begin{cases}
		e_{vu,w'} & \mbox{if $w = vu'$} \\ e_{u,vw'} & \mbox{if $u' = vw$} \\ 0 & \mbox{if neither $u'$ nor $w$ is a tail of the other.} \end{cases}
\end{equation}

\subsection{A recognition lemma}

Let us provide another presentation of $\M[\nn](F)$, based on a matrix recognition theorem of Agnarsson, Amitsur and Robson:
\begin{rem}[{\cite[Theorem~7.5]{Lam:cm}}]\label{AAR}
	A ring is an $n \times n$ matrix ring over its center if and only if it has elements $a,b,c$ such that $b^n = 0$ and $ab^k+b^{n-k}c=1$, where $k \in \set{1,\dots,n-1}$ is arbitrary.
	
	Indeed, the relations are satisfied in $\M[n](F)$ by $\tilde{a} = e_{0,k}+\cdots+e_{n-k-1,n-1}$, $\tilde{b} = e_{1,0}+\cdots+e_{n-1,n-2}$ and $\tilde{c} = e_{0,n-k}+\cdots+e_{k-1,n-1}$.
\end{rem}

We need a similar presentation for a pair of matrix algebras:
\begin{prop}\label{AARme}
	Suppose the subalgebra $\M[n](F) \sub R$ is generated by $a,b,c$ as in \Rref{AAR} (where $n>1$ and $k$ is arbitrary). There is an intermediate matrix algebra $\M[n](F) \sub \M[nm](F) \sub R$ if and only if $b$ has an $m^{\operatorname{th}}$ root in~$R$.
\end{prop}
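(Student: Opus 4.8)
The plan is to reduce the statement to a question about the centralizer $C := C_R(\M[n](F))$. Since $\M[n](F)$ shares its unit with $R$, the classical matrix‑unit decomposition gives $R \isom \M[n](C)$, with $\M[n](F)$ corresponding to the scalar‑entry matrices. I would then argue that \emph{an intermediate algebra $\M[n](F) \sub \M[nm](F) \sub R$ exists if and only if $C$ contains a copy of $\M[m](F)$ with unit $1_R$}: given the intermediate algebra, its centralizer inside $\M[nm](F)$ is, by the double centralizer theorem and cancellation of finite‑dimensional central simple algebras, such a copy of $\M[m](F)$, and it lies in $C$; conversely, if $\M[m](F) \sub C$ with unit $1_R$, then $\M[n](F)$ and this copy commute, are central simple over $F$, and hence generate a subalgebra $\isom \M[n](F)\tensor[F]\M[m](F) \isom \M[nm](F)$ containing $\M[n](F)$.

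For the implication ``intermediate algebra $\Rightarrow$ $m$‑th root'', given matrix units $f_{st}$ ($0\le s,t\le m-1$) of a copy of $\M[m](F)$ inside $C$, I would simply put $\beta = \sum_{s=0}^{m-2} f_{s+1,s} + b\,f_{0,m-1}$. As $b\in\M[n](F)$ commutes with every $f_{st}$, a ``once around the cycle $0\to 1\to\cdots\to m-1\to 0$'' computation yields $\beta^m = b\sum_{s} f_{ss} = b$, so $\beta$ is the required root.

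The substantive direction is producing $\M[m](F)\sub C$ from a root $\beta$ with $\beta^m=b$. First I would note that the AAR relations force $b$ to be a regular nilpotent of $\M[n](F)$: applying $ab^k+b^{n-k}c=1$ to $\Ker b^k$ shows $\Ker b^k \sub \Im b^{n-k}$, so $n\le \rank b^k+\rank b^{n-k}$, and an elementary partition estimate excludes every Jordan type of $b$ except a single block of size $n$. Choosing a Jordan basis, $b$ becomes the standard shift $N$ on $C^n$ under $R\isom\M[n](C)$. Now $\beta$ commutes with $N=\beta^m$, hence preserves the flag $C^n \supseteq NC^n \supseteq \cdots \supseteq N^nC^n = 0$ and so is lower triangular in the Jordan basis; comparing its action on the successive rank‑one quotients, which $N$ carries isomorphically one into the next, forces all diagonal entries of $\beta$ to equal one element $p\in C$. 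Writing $\beta = pI + T$ with $T$ strictly lower triangular and reading off the diagonal and sub‑diagonal of $\beta^m = N$, one gets $p^m = 0$ and $\sum_{a=0}^{m-1} p^a r p^{m-1-a} = 1$ in $C$, where $r = T_{1,0}$. Rewriting the second relation as $r p^{m-1} + p\bigl(\sum_{a=0}^{m-2} p^a r p^{m-2-a}\bigr) = 1$ exhibits elements of $C$ satisfying the AAR relations for $\M[m]$, so by \Rref{AAR} we get $C \isom \M[m](Z(C))$; since $F\cdot 1_R \sub Z(C)$, this contains the desired unital copy of $\M[m](F)$.

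The main obstacle is exactly this last step, namely controlling the center. It is tempting to apply \Rref{AAR} directly to $\langle a,\beta,c\rangle\sub R$, using $\beta^{nm}=0$ and $a\beta^{km}+\beta^{(n-k)m}c = ab^k+b^{n-k}c = 1$; but that only gives $\langle a,\beta,c\rangle \isom \M[nm](Z)$ with $Z$ its center, and nothing forces $Z=F$. Passing through the centralizer $C$ and the triangular form of $\beta$ is what pins the center down to (a ring containing) $F$, and I expect the triangularization together with the ``all diagonal entries equal'' argument to need the most care.
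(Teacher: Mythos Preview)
Your argument is correct, and it takes a genuinely different route from the paper's.

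The paper proves the direction ``root $\Rightarrow$ intermediate algebra'' in one line: from $b'^m=b$ it observes that $a,b',c$ satisfy the AAR relations for degree $nm$ (namely $b'^{nm}=b^n=0$ and $ab'^{km}+b'^{(n-k)m}c=ab^k+b^{n-k}c=1$), and then invokes \Rref{AAR} directly to conclude that $a,b',c$ generate a copy of $\M[nm](F)$ containing $F[a,b,c]=\M[n](F)$. For the converse, given $\M[n](F)\sub\M[nm](F)$, the paper simply writes down an explicit $m$th root of the standard $\tilde b$ inside $\M[nm](F)$ and appeals to conjugation (Skolem--Noether inside the finite-dimensional algebra $\M[nm](F)$) to transport it to a root of the given~$b$.

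Your approach instead passes through the centralizer $C=C_R(\M[n](F))$, reducing both directions to the existence of a unital copy of $\M[m](F)$ inside $C$. The substantive work is your Toeplitz analysis of $\beta$ in $\M[n](C)$: commutation with $N=\beta^m$ forces $\beta$ to be lower-triangular Toeplitz, the constant diagonal $p$ satisfies $p^m=0$, and the first subdiagonal yields $\sum_{a}p^a r p^{m-1-a}=1$, which are precisely the AAR relations for $\M[m]$ inside $C$. This is more laborious but entirely self-contained. In particular, the concern you raise---that a direct appeal to \Rref{AAR} on $\langle a,\beta,c\rangle$ only yields $\M[nm](Z)$ with $Z$ the center of that subring---is legitimate at the level of exposition; the paper's one-line argument implicitly relies on the compatibility of the AAR matrix-unit construction for the nested data $(a,b,c)$ and $(a,b',c)$ to place $\M[n](F)$ inside the resulting $\M[nm](F)$, which it does not spell out. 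Your centralizer route sidesteps this entirely, since once $\M[m](F)\sub C$ is established (over $F$, as $F\cdot 1\sub Z(C)$), the tensor product $\M[n](F)\otimes_F\M[m](F)\isom\M[nm](F)$ is automatic. The price is a longer argument; the gain is that no hidden compatibility or conjugation step is needed.

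One small correction: when you conclude ``by \Rref{AAR} we get $C\isom\M[m](Z(C))$'', you are applying AAR to the ring $C$ itself (which has the elements $p,r,c'$), not to the subring they generate---this is the correct reading, and is exactly why the center in your argument is pinned down to $Z(C)\supseteq F$ rather than the center of some unknown subalgebra.
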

\begin{proof}
	Let $a,b,c$ satisfy $b^n = 0$ and $ab^k+b^{n-k}c = 1$. First assume $b = b'^m$ for some $b' \in R$. Then $b'^{nm} = b^n = 0$ and $ab'^{km}+b'^{(n-k)m}c = ab^k+b^{n-k}c=1$, so $a,b',c$ generate a copy of $\M[nm](F)$ by \Rref{AAR}, which contains $F[a,b'^m,c] \isom \M[n](F)$. Conversely, suppose $F[a,b,c]$ is contained in a copy of $\M[nm](F)$ in $R$. Then $b' = \sum_{i=1}^{n}\sum_{j=1}^{m} e_{jn-i,(j+1)n-i} + \sum_{i=1}^{n-1}e_{mn-i,n-i-1}$ is an $m$th root of $\tilde{b}$, and we may assume $a,b,c$ are $\tilde{a},\tilde{b},\tilde{c}$ by conjugation.
\end{proof}

\begin{thm}
	Let $\nn$ be a supernatural number. Suppose $\nn = \prod m_t$ where $m_t \in \N$, $m_1 > 1$. An $F$-algebra contains $\M[\nn](F)$ if and only if it has elements $a$, $c$ and $b_1,b_2,\dots$, such that (for $b_0 = 0$)
	\begin{eqnarray}
		b_{t}^{m_t} & = & b_{t-1} \quad (t\geq 1), \label{P1}\\
		a b_1 + b_1^{m_1-1} c &=& 1. \label{P2}
	\end{eqnarray}
\end{thm}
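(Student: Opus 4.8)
The plan is to reduce both implications to the standard tower of matrix subalgebras together with the recognition results \Rref{AAR} and \Pref{AARme}. Set $n_0 = 1$ and $n_t = m_1 m_2 \cdots m_t$, so that $1 = n_0 \divides n_1 \divides n_2 \divides \cdots$, $\lcm_t n_t = \prod_t m_t = \nn$, and $\M[\nn](F) = \dlim \M[n_t](F)$ along the standard diagonal embeddings $\M[n_t](F) \hra \M[n_{t+1}](F)$. I will also use the exponent $k_t := n_t/m_1 = m_2 m_3 \cdots m_t$ (with $k_1 = 1$); since $m_1 > 1$ it lies in $\set{1, \dots, n_t-1}$, the admissible range in \Rref{AAR}.

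For ($\Leftarrow$): given $a, c, b_1, b_2, \dots$ in an $F$-algebra $R$ satisfying \eq{P1} and \eq{P2}, I would first note that \eq{P1} gives $b_t^{k_t} = b_1$, hence $b_t^{n_t} = (b_t^{k_t})^{m_1} = b_1^{m_1} = b_0 = 0$ and $b_t^{n_t - k_t} = (b_t^{k_t})^{m_1-1} = b_1^{m_1-1}$, so that \eq{P2} becomes exactly $a b_t^{k_t} + b_t^{n_t-k_t} c = 1$. Thus $a, b_t, c$ satisfy the relations of \Rref{AAR} for $n = n_t$ and $k = k_t$, so $F[a,b_t,c]$ is an $n_t \times n_t$ matrix ring over its center, with $a, b_t, c$ mapped to the standard generators $\tilde a, \tilde b, \tilde c$. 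Since these generators --- and hence all of $F[a,b_t,c]$ --- lie in the $F$-linear span of the associated matrix units, a copy of $\M[n_t](F)$, comparing $F$-dimensions forces the center to be $F$ and $F[a,b_t,c] \isom \M[n_t](F)$. The relation $b_t = b_{t+1}^{m_{t+1}}$ gives $F[a,b_t,c] \sub F[a,b_{t+1},c]$, all sharing the unit $a b_1 + b_1^{m_1-1} c$, so $R$ contains $\bigcup_t F[a,b_t,c]$: a countably generated locally matrix $F$-algebra all of whose finite-dimensional simple subalgebras have degree dividing some $n_t$. By the classification recalled in the introduction (from \cite{Local}), this union is the supernatural matrix algebra of degree $\lcm_t n_t = \nn$, so $R$ contains $\M[\nn](F)$.

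For ($\Rightarrow$): if $R$ contains $\M[\nn](F)$, it suffices to build the elements inside $\M[\nn](F) = \bigcup_t \M[n_t](F)$. Take $a = \tilde a$, $b_1 = \tilde b$, $c = \tilde c$ to be the \Rref{AAR} generators of $\M[n_1](F) = \M[m_1](F)$ with $k = 1$; then $b_1^{m_1} = b_1^{n_1} = 0 = b_0$ and $a b_1 + b_1^{m_1-1} c = 1$, giving \eq{P2} and the case $t=1$ of \eq{P1}. For $t \geq 2$ I would define $b_t \in \M[n_t](F)$ recursively as a "mixed-radix shift": writing $0 \leq q < n_t$ uniquely as $q = s n_{t-1} + r$ with $0 \leq r < n_{t-1}$ and $0 \leq s < m_t$, take the $q$-th column of $b_t$ to be $e_{q+n_{t-1},\,q}$ when $s \leq m_t - 2$, the $r$-th column of $b_{t-1}$ moved to column $q$ when $s = m_t-1$, and zero when $s = m_t-1$ and the $r$-th column of $b_{t-1}$ is zero. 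Tracing the associated partial injection through $m_t$ steps should yield $b_t^{m_t} = b_{t-1}$ and $b_t^{n_t}=0$, which gives \eq{P1}. Alternatively one may argue conceptually, as in ($\Leftarrow$): the standard inclusion $\M[n_t](F) \sub \M[n_{t+1}](F) = \M[n_t m_{t+1}](F)$ forces $b_t$ to have an $m_{t+1}$th root by \Pref{AARme}, which I would take for $b_{t+1}$, checking that $a, b_{t+1}, c$ again satisfy \Rref{AAR} for $\M[n_{t+1}](F)$ (with exponent $k_{t+1} = k_t m_{t+1}$) so the induction continues.

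The hard part will be two pieces of bookkeeping rather than any single deep step. First, one must check that the \Rref{AAR} exponent stays admissible at each level, $1 \leq k_t \leq n_t-1$, which is precisely where $m_1 > 1$ is used ($k_t = n_t/m_1$). Second --- the point I expect to need the most care --- is the claim in ($\Leftarrow$) that the matrix ring $F[a,b_t,c]$ produced by \Rref{AAR} has center exactly $F$, not a proper finite extension; this is the dimension comparison sketched above. The remaining identities ($b_t^{k_t} = b_1$, $b_t^{n_t}=0$, the partial-injection computation $b_t^{m_t}=b_{t-1}$, and the exponent transformation $k_t \mapsto k_t m_{t+1}$) are routine.
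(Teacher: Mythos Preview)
Your proposal is correct and takes the same route as the paper: the paper's one-paragraph proof applies \Rref{AAR} to $a,b_1,c$ to obtain $\M[m_1](F)$ and then iterates \Pref{AARme} to build the chain $\M[m_1](F)\sub\M[m_1m_2](F)\sub\cdots$, which is exactly what your direct verification that $(a,b_t,c)$ satisfies the AAR relations for $(n_t,\,k_t=n_t/m_1)$ unwinds. Your write-up is in fact more thorough --- you also supply the $(\Rightarrow)$ direction, which the paper leaves implicit, and the ``center $=F$'' point you flagged is a genuine subtlety that the paper's proof of \Pref{AARme} glosses over in the same way.
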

\begin{proof}
	Write $n_t = m_1 \cdots m_t$. If the algebra has $a,b_1,c$ satisfying the given relation, then it contains a copy of $\M[m_1](F)$ by \Rref{AAR}, which is contained in a chain $\M[m_1](F) \sub \M[m_1m_2](F) \sub \cdots$ by \Pref{AARme},  whose union is by definition $\M[\nn](F)$.
\end{proof}

(The series of relations \eq{P1} can be encoded by the informal expression $(b_t) = 0^{1/\nn}$).

\subsection{A faithful simple module}\label{ss:nadic}

Consider the vector space $F^{\omega}$ of all sequences over $F$ (strictly containing the vector space $F^{<\omega}$ of eventually finite sequences).

Fix a supernatural number $\nn$, with a sequence $1=n_0 \divides n_1 \divides n_2 \divides \cdots$ whose lcm is $\nn$ as before. For each $t\geq 0$, view $F^{n_t}$ as the space spanned by $\nn$-adic words $u$ of length $\abs{u} = t$. We embed $F^{n_t} \hra F^{n_{t+1}}$ by sending $u$ to the formal sum $\sum_i {iu}$. This system defines a direct limit $F^{\nn} = \dlim F^{n_t}$. We may identify $F^{\nn} \sub F^{\omega}$ as the subspace of sequences which are $n_t$-periodic for some $t$. The natural action of $\M[n_t](F)$ on $F^{n_t}$ extends to an action of $\M[\nn](F) = \dlim \M[n_t](F)$ on $F^{\nn} = \dlim F^{n_t}$ in the obvious manner. In fact, whenever $u,v,w$ are (finite) $\nn$-adic words and $\abs{u} =\abs{v}$, we have
\begin{equation}\label{Mmact}
	e_{u,v} w = \begin{cases} {w' u} & \mbox{if $w = w'v$},\\
		0 & \mbox{otherwise.}\end{cases}
\end{equation}

It is easy to see that $F^{\nn}$ is a faithful simple module of $\M[\nn](F)$. This action will be extended in \Sref{s:deep}. To put the action into a concrete form, let $\rcfin$ denote the algebra of row- and column-finite matrices. We say that a matrix $x \in \rcfin$ is {\bf{$\nn$-recurrent}} if there is a natural divisor $n \divides \nn$ and a matrix $a \in \M[n](F)$, such that $x = a \oplus a \oplus \cdots$.
\begin{cor}\label{recurr}
	$\M[\nn](F)$ can be identified as the subalgebra of $\rcfin$ composed of the $\nn$-recurrent matrices.
\end{cor}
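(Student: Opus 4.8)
The plan is to build an explicit unital algebra embedding $\phi\colon\M[\nn](F)\hra\rcfin$ and then compute its image. For each $t$ I would let $\M[n_t](F)$ act block-diagonally: identify an $\nn$-adic word $w$ of length $t$ with the natural number $<n_t$ that it represents, and send the matrix unit $e_{w,w'}\in\M[n_t](F)$ to the row- and column-finite matrix $E^{(n_t)}_{w,w'}\oplus E^{(n_t)}_{w,w'}\oplus\cdots$, that is, the matrix whose only nonzero entries are $1$'s in the positions $(kn_t+w,\,kn_t+w')$ for $k\ge0$. Extending linearly gives $\phi_t\colon\M[n_t](F)\to\rcfin$, which is plainly an injective unital homomorphism, since block-diagonal matrices of a fixed block size add and multiply block-wise and the original matrix reappears as a diagonal block.

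The main point to check is that the $\phi_t$ are compatible with the transition embeddings $\M[n_t](F)\hra\M[n_{t+1}](F)$ of \eq{dmrel3}, so that they assemble into a single map $\phi\colon\M[\nn](F)=\dlim\M[n_t](F)\to\rcfin$. By \eq{dmrel3} the image of $e_{w,w'}$ in $\M[n_{t+1}](F)$ is $\sum_{i=0}^{n_{t+1}/n_t-1}e_{iw,iw'}$, and since the word $iw$ represents the integer $in_t+w$, the matrix $\phi_{t+1}\bigl(\sum_i e_{iw,iw'}\bigr)$ has its $1$'s exactly at the positions $(mn_t+w,\,mn_t+w')$, $m\ge0$: inside the $j$-th block of size $n_{t+1}$ they occur at $(jn_{t+1}+in_t+w,\,\cdot)$ with $0\le i<n_{t+1}/n_t$, and $jn_{t+1}+in_t$ ranges over all multiples of $n_t$ as $j$ and $i$ vary. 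Hence $\phi_{t+1}\circ\iota_t=\phi_t$ on matrix units, so on all of $\M[n_t](F)$; therefore $\phi$ is well defined, it is a homomorphism because each $\phi_t$ is one, and it is injective because any $x\in\M[\nn](F)$ lies in some $\M[n_t](F)$, on which $\phi$ restricts to the injective $\phi_t$.

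It remains to identify $\Im\phi$ with the set of $\nn$-recurrent matrices. By construction $\Im\phi=\bigcup_t\{\,a\oplus a\oplus\cdots : a\in\M[n_t](F)\,\}$, and each $n_t\divides\nn$, so every element of $\Im\phi$ is $\nn$-recurrent. Conversely, if $x=a\oplus a\oplus\cdots$ with $a\in\M[n](F)$ and $n\divides\nn$, then $n$ (being an ordinary natural number, a product of finitely many prime powers each bounded by the corresponding exponent of $\nn$) divides $n_t$ for some $t$, and regrouping blocks gives $x=(a^{\oplus n_t/n})\oplus(a^{\oplus n_t/n})\oplus\cdots=\phi_t(a^{\oplus n_t/n})\in\Im\phi$. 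Thus $\phi$ is an isomorphism onto the subalgebra of $\nn$-recurrent matrices. As a consistency check with \Ssref{ss:nadic}, one sees that $\phi(a)$, viewed as an operator on $F^\omega$, restricts to the original action of $a$ on the faithful submodule $F^\nn$: the matrix $E^{(n_t)}_{u,v}\oplus E^{(n_t)}_{u,v}\oplus\cdots$ carries the $n_t$-periodic indicator of the residue class $v$ to that of $u$ and annihilates the other length-$t$ basis vectors, exactly as in \eq{Mmact}.

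The only step requiring genuine care is the compatibility computation, i.e.\ matching the $\nn$-adic bookkeeping (prepending a digit $i$ to a length-$t$ word adds $in_t$ to the represented integer) against the passage from blocks of size $n_{t+1}$ to blocks of size $n_t$. Everything else --- that each $\phi_t$ is an injective homomorphism, and that the images of the $\phi_t$ exhaust the $\nn$-recurrent matrices --- is routine.
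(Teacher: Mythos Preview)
Your argument is correct and is essentially the explicit version of what the paper leaves implicit: the paper states \Cref{recurr} without proof, as an immediate reformulation of the faithful action of $\M[\nn](F)$ on $F^{\nn}\sub F^{\omega}$ described just before it. Your construction of $\phi_t$ via block-diagonal repetition, the compatibility check against \eq{dmrel3}, and the identification of the image are exactly the details one would supply to justify the corollary; the final ``consistency check'' paragraph is precisely the link back to the paper's module-theoretic phrasing.
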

\section{Locally finite dimensional central simple algebras}\label{sec:CC}

To put supernatural matrices in structural context, let us briefly discuss a new family of algebras, which is the subject of \cite{Local}, where we give complete proofs and more details. Some of these results can already been found in previous paper as stated in the introduction.

Let $F$ be a field.
Let $\CC{}$ denote the class of $F$-algebras which locally are central simple and finite dimensional. Here, an algebra is ``locally $\CP$'' if every finitely generated subalgebra is contained in a finitely generated subalgebra satisfying the property~$\CP$. Every algebra in $\CC{}$ is an injective direct limit of finite dimensional central simple algebras. Algebras in $\CC{}$ are central simple. since the class of von Neumann regular algebras is closed under direct limits, these algebras are immediately von Neumann regular. Observe that since matrix algebras over a field $F$ are finite-dimensional $F$-central simple, then locally matrix algebras are locally finite-dimensional central simple algebras.

Many of the basic ingredients in the theory of finite dimensional central simple algebras extend nicely to this new class. Let us start with the degree, defined in the finite dimensional theory as the square root of the dimension.
\begin{defn}
	Let $\alg{A}$ be an algebra in $\CC{}$. The {\bf{degree}} of $A$ is the $\lcm$ of the degrees of its \fdcsa[sub]{s} $A_0 \sub \alg{A}$.
\end{defn}

Equivalently, writing $\alg{A} = \dlim_{\gamma\in\Gamma} A_{\gamma}$ as a direct limit of finite dimensional central simple algebras, over an arbitrary directed set $\Gamma$, the degree is $\deg(\alg{A}) = \lcm \set{\deg{A_{\gamma}}}$. More generally, we have:
\begin{prop}
	Let $\alg{A} = \dlim[{\gamma}] \alg{A}_{\gamma}$ where $\alg{A}_{\gamma} \in \CC{}$. Then $\deg(\alg{A}) = \lcm\set{\deg(\alg{A}_\gamma)}$.
\end{prop}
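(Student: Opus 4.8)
The plan is to reduce the general direct-limit statement to the already-established finite-dimensional case by a cofinality argument. Write $\alg{A} = \dlim_{\gamma\in\Gamma}\alg{A}_\gamma$ with each $\alg{A}_\gamma\in\CC{}$, and in turn write each $\alg{A}_\gamma = \dlim_{\delta\in\Delta_\gamma}A_{\gamma,\delta}$ as a direct limit of \fdcsa{s}, which is possible since every algebra in $\CC{}$ is an injective direct limit of finite dimensional central simple algebras. By the preceding definition (and the equivalent formulation noted right after it), $\deg(\alg{A}_\gamma)=\lcm_\delta\set{\deg(A_{\gamma,\delta})}$, and $\deg(\alg{A})=\lcm_{\gamma,\delta}\set{\deg(A_{\gamma,\delta})}$ once we observe that the double system $\set{A_{\gamma,\delta}}$ is a directed system of \fdcsa{s} whose direct limit is $\alg{A}$. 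The identity $\lcm_{\gamma,\delta}=\lcm_\gamma\lcm_\delta$ for supernatural numbers is then purely formal: the $\lcm$ of a family of supernatural numbers is computed primewise as the supremum of $p$-adic valuations in $\N\cup\set{\infty}$, and suprema are associative over any partition of the index set.

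The steps, in order: first, I would spell out that the natural maps $A_{\gamma,\delta}\to \alg{A}_\gamma\to\alg{A}$ exhaust $\alg{A}$, so that every finitely generated subalgebra of $\alg{A}$ lies in the image of some $A_{\gamma,\delta}$; this uses that the transition maps in a direct limit of algebras have union equal to the limit, applied twice. Second, I would record the two inequalities: $\deg(A_{\gamma,\delta})\divides\deg(\alg{A}_\gamma)\divides\deg(\alg{A})$ gives $\lcm_{\gamma,\delta}\set{\deg(A_{\gamma,\delta})}\divides\deg(\alg{A})$, while conversely any \fdcsa{} subalgebra $A_0\sub\alg{A}$ is finitely generated, hence contained in the image of some $A_{\gamma,\delta}$, and a central simple subalgebra of a central simple algebra has degree dividing that of the ambient one inside a common finite-dimensional central simple overalgebra — so $\deg(A_0)\divides\deg(A_{\gamma,\delta})$, giving $\deg(\alg{A})=\lcm\set{\deg(A_0)}\divides\lcm_{\gamma,\delta}\set{\deg(A_{\gamma,\delta})}$. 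Third, combine with the finite-dimensional formula to conclude.

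The main obstacle is the second inequality, specifically the claim that a \fdcsa{} subalgebra $A_0$ of $\alg{A}$ has degree dividing $\deg(A_{\gamma,\delta})$ when $A_0\sub$ (image of) $A_{\gamma,\delta}$. One must produce a single \fdcsa{} containing both $A_0$ and (the image of) $A_{\gamma,\delta}$ so that the double-centralizer theorem applies; this is exactly where the defining ``locally'' property of $\CC{}$ is used — the finitely generated subalgebra generated by $A_0$ together with a set of generators of the image of $A_{\gamma,\delta}$ sits inside some \fdcsa{} $A_1\sub\alg{A}$, and then both $\deg(A_0)$ and $\deg(A_{\gamma,\delta})$ divide $\deg(A_1)$, while $A_1$ itself is one of the $A_0$'s in the definition of $\deg(\alg{A})$, so this does not even worsen the bound. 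I expect everything else to be bookkeeping: the primewise computation of $\lcm$, and the compatibility of the double limit, are routine. A mild subtlety worth a sentence is that $\Delta_\gamma$ may depend on $\gamma$ and the resulting index set $\set{(\gamma,\delta)}$ must be checked to be directed, which follows from directedness of $\Gamma$ together with the fact that the transition maps $\alg{A}_\gamma\to\alg{A}_{\gamma'}$ carry finitely generated (hence \fd{}) subalgebras into \fd{} subalgebras, which are absorbed into some $A_{\gamma',\delta'}$.
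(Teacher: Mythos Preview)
The paper does not actually prove this proposition: Section~\ref{sec:CC} is a summary of results from \cite{Local}, and the statement is recorded there without argument. So there is no ``paper's own proof'' to compare against.

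Your argument is correct. Two remarks. First, it can be streamlined: since each $\alg{A}_\gamma$ is simple (algebras in $\CC{}$ are central simple), the structure maps $\alg{A}_\gamma\to\alg{A}$ are automatically injective, so any \fdcsa{} subalgebra of $\alg{A}_\gamma$ is already a \fdcsa{} subalgebra of $\alg{A}$; this gives $\deg(\alg{A}_\gamma)\divides\deg(\alg{A})$ directly, and conversely any \fdcsa{} $A_0\sub\alg{A}$, being finitely generated, lands in some $\alg{A}_\gamma$ and hence $\deg(A_0)\divides\deg(\alg{A}_\gamma)$ by the definition of degree (it is contained in some \fdcsa{} subalgebra of $\alg{A}_\gamma$, where the double centralizer theorem applies). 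The intermediate layer $A_{\gamma,\delta}$ and the directedness check for the double index set are thus not strictly needed. Second, you should make explicit the injectivity of the structure maps (via simplicity of the $\alg{A}_\gamma$), since without it ``image of $A_{\gamma,\delta}$'' need not be a \fdcsa{} and the divisibility $\deg(A_0)\divides\deg(A_{\gamma,\delta})$ would not follow.
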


It is easy to see that $\deg(\M[\nn](F)) = \nn$, thereby proving \Pref{S1}.
The class $\CC{}$ is closed to tensor product, which fits naturally with the degree:
\begin{prop}
	For $\alg{A}, \alg{B} \in \CC{}$, we have that $\deg(\alg{A}\tensor \alg{B})=\deg(\alg{A})\deg(\alg{B})$.
\end{prop}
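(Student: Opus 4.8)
The plan is to reduce the claim $\deg(\alg{A}\tensor\alg{B}) = \deg(\alg{A})\deg(\alg{B})$ to the finite-dimensional case together with the direct-limit compatibility already recorded above. First I would write $\alg{A} = \dlim_{\gamma} A_\gamma$ and $\alg{B} = \dlim_{\delta} B_\delta$ as injective direct limits of \fdcsa{s} over directed sets. Since tensor product commutes with direct limits, $\alg{A}\tensor\alg{B} = \dlim_{(\gamma,\delta)} (A_\gamma \tensor B_\delta)$, a direct limit over the product directed set. Each $A_\gamma \tensor B_\delta$ is a finite-dimensional $F$-central simple algebra (the tensor product of two such over the common center $F$ is again central simple, a standard fact), so this exhibits $\alg{A}\tensor\alg{B}$ as a member of $\CC{}$ presented as a direct limit of \fdcsa{s}. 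Applying the degree formula for direct limits (the displayed \Pref{} preceding the statement), we get
\[
\deg(\alg{A}\tensor\alg{B}) = \lcm_{(\gamma,\delta)} \deg(A_\gamma \tensor B_\delta).
\]

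**The finite-dimensional identity and the arithmetic step.** For finite-dimensional central simple algebras the degree is multiplicative under tensor product: $\deg(A_\gamma \tensor B_\delta) = \deg(A_\gamma)\deg(B_\delta)$, since $\dim_F(A\tensor B) = \dim_F(A)\dim_F(B)$ and degree is the square root of dimension. So the right-hand side becomes $\lcm_{(\gamma,\delta)} \deg(A_\gamma)\deg(B_\delta)$. It remains to check the purely arithmetic identity for supernatural numbers
\[
\lcm_{(\gamma,\delta)} \bigl(\deg(A_\gamma)\cdot \deg(B_\delta)\bigr) \;=\; \Bigl(\lcm_\gamma \deg(A_\gamma)\Bigr)\cdot\Bigl(\lcm_\delta \deg(B_\delta)\Bigr).
\]
This I would verify prime by prime: for a fixed prime $p$, the $p$-exponent of the left side is $\sup_{(\gamma,\delta)} \bigl(v_p(\deg A_\gamma) + v_p(\deg B_\delta)\bigr)$, and since $\gamma,\delta$ range independently this supremum splits as $\sup_\gamma v_p(\deg A_\gamma) + \sup_\delta v_p(\deg B_\delta)$ (with the usual conventions when one of the suprema is $\infty$), which is exactly the $p$-exponent of the right side. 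Hence the two supernatural numbers agree.

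**The main obstacle.** The only genuinely delicate point is the claim that $A_\gamma \tensor B_\delta$ is central simple over $F$ — i.e., that $\CC{}$ is closed under tensor product at the finite-dimensional level, which is where centrality over the common base field $F$ is used essentially. For \fd{} algebras this is classical (Wedderburn theory: $M_r(D) \tensor M_s(E) \cong M_{rs}(D\tensor E)$ with $D\tensor E$ central simple when $D,E$ are), but the statement as phrased also implicitly asserts closure of $\CC{}$ under tensor product in general; if that closure is not yet available, I would establish it here by precisely the direct-limit argument above, observing that a direct limit of \fdcsa{s} over $F$ lies in $\CC{}$ by definition. Everything else — commuting $\tensor$ with $\dlim$, and the $\lcm$ bookkeeping — is routine.
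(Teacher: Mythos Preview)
The paper does not actually supply a proof of this proposition; it is stated without argument, with the surrounding text deferring to \cite{Local} for complete proofs. So there is no in-paper proof to compare against.

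That said, your argument is correct and is precisely the natural one: present $\alg{A}$ and $\alg{B}$ as direct limits of \fdcsa{s}, pass the tensor product inside the limits, invoke multiplicativity of degree in the finite-dimensional case, and finish with the prime-by-prime verification that $\lcm_{(\gamma,\delta)}(m_\gamma n_\delta) = (\lcm_\gamma m_\gamma)(\lcm_\delta n_\delta)$ for supernatural numbers. Your identification of the only nontrivial input --- that the tensor product of two \fdcsa{s} over the same center is again one, hence $\CC{}$ is closed under $\tensor$ --- is exactly right, and the paper in fact asserts this closure in the sentence immediately preceding the proposition. Nothing is missing.
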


\subsection{Algebras of countable dimension}\label{ss:31}

An algebra in $\CC{}$ is countably generated if and only if it can be presented as a direct limit over the first infinite ordinal $\omega$, namely $\alg{A} = \dlim_{n} A_n$, where the $A_n$ are finite dimensional central simple. We thus let $\CC{\omega}$ denote the class of countably generated algebras in $\CC{}$. For example, supernatural matrices are in $\CC{\omega}$.
\begin{prop}[{\cite[Prop.~9.6]{Local}}]\label{noarrow}
	For a sequence of finite dimensional central simple algebras $A_1,A_2,\dots$, all the direct limits $\dlim_{n} A_n$ are isomorphic (regardless of the morphisms $A_n \ra A_{n+1}$).
\end{prop}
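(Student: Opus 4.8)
The plan is to show that any countably-generated algebra in $\CC{}$ presented as a direct limit $\dlim_n A_n$ of finite dimensional central simple algebras does not depend on the connecting maps. First I would reduce to the case where every structure map $A_n \to A_{n+1}$ is injective: since the algebra lies in $\CC{}$, it is central simple, hence simple, so the kernel of each map $A_n \to \dlim A_n$ is a two-sided ideal of the simple algebra $A_n$, therefore either zero or all of $A_n$. Discarding the $A_n$ that map to zero and reindexing, I may assume all maps into the limit — and hence all connecting maps — are injective. Thus it suffices to compare two chains of finite dimensional central simple $F$-algebras related by inclusions.

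The heart of the argument is a back-and-forth (interlacing) construction. Suppose $\alg A = \bigcup_n A_n$ and $\alg B = \bigcup_n B_n$ are two such increasing unions, with $\deg A_n = \deg B_n'$ controllable; concretely, since every $A_n$ is a central simple $F$-algebra that is finite dimensional, and since the degree of the limit is by the preceding proposition the $\lcm$ of the degrees appearing, I can pass to cofinal subsequences so that for each $n$ there are embeddings $A_n \hra B_{k(n)}$ and $B_{k(n)} \hra A_{n+1}$. Here I would use two facts: (i) a finite dimensional central simple $F$-algebra embeds into another one if and only if the first divides the second in the Brauer-degree sense (Noether--Skolem together with the index/degree divisibility criterion), and (ii) inside an algebra of $\CC{}$, any finite dimensional subalgebra is contained in a finite dimensional \emph{central simple} subalgebra, which is what lets me enlarge $A_n$ to something containing a given $B_{k(n)}$. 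Matching degrees along the way forces all the intermediate algebras to be \emph{split}, i.e.\ matrix algebras $\M[d](F)$, because a central simple $F$-algebra sitting between $\M[d](F)$ and $\M[dd'](F)$ (both split) has degree dividing $dd'$ and is Brauer-trivial, hence split. This interlacing yields a commuting ladder of inclusions whose two rails have colimits $\alg A$ and $\alg B$, giving $\alg A \isom \alg B$.

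Alternatively, and more cleanly, I would isolate the key lemma: if $A \sub B$ are finite dimensional central simple $F$-algebras with $A$ split, then $B$ is split and the inclusion is conjugate (inside $B$) to the standard block-diagonal embedding $\M[d](F) \hra \M[e](F)$ determined only by $d \divides e$. This follows from Noether--Skolem applied to the two embeddings of $A \isom \M[d](F)$ into $B \isom \M[e](F)$ (the standard one and the given one), after noting $d \divides e$ forces $B$ split since $[B]=[A]=0$ in $\Br{}(F)$ by the divisibility of indices. Granting this lemma, every chain of finite dimensional central simple algebras with injective maps and with the first (hence, by the lemma, every) term split is, up to isomorphism of directed systems, the standard chain $\M[n_1](F) \hra \M[n_2](F) \hra \cdots$ with $n_1 \divides n_2 \divides \cdots$, and its colimit is $\M[\nn](F)$ for $\nn = \lcm n_i$. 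The main obstacle I anticipate is the bookkeeping in the interlacing step — ensuring the two rails can be made cofinal in each other while keeping all intermediate algebras split and all squares commutative — rather than any single hard idea; the splitting of the intermediate algebras, which is the linchpin, falls out of elementary Brauer-group divisibility once the degree bookkeeping is set up correctly.
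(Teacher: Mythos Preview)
The paper does not prove this proposition here; it is quoted from \cite{Local}. So there is no in-paper argument to compare against, and I will simply assess your proposal on its own.

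There is a genuine gap. Your ``key lemma'' in the alternative approach --- that if $A \sub B$ are finite dimensional central simple $F$-algebras with $A$ split then $B$ is split --- is false. Take $B = \M[d](F) \tensor D$ for any nontrivial central division algebra $D$, and $A = \M[d](F) \tensor 1 \sub B$: then $A$ is split but $B$ is not. The mistake is the claim ``$[B]=[A]$ in $\Br{}(F)$ by the divisibility of indices'': what the double centraliser theorem gives is $B \isom A \tensor \Ce[B]{A}$, hence $[B] = [A]\cdot[\Ce[B]{A}]$, and nothing forces the centraliser to be split. Consequently your reduction ``the first term split $\Rightarrow$ every term split $\Rightarrow$ the chain is the standard matrix chain'' collapses, and the first back-and-forth paragraph inherits the same error where you assert ``matching degrees forces all the intermediate algebras to be split''.

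More fundamentally, you have overshot the statement. The proposition fixes the \emph{sequence} $A_1,A_2,\dots$ and varies only the connecting maps; you are not free to argue about which abstract central simple algebras occur. The clean argument uses exactly the tool you name but without any splitness hypothesis: since each $A_n$ is simple and the maps are unital, every $\varphi_n \co A_n \ra A_{n+1}$ is injective; and by Skolem--Noether any two $F$-algebra embeddings of $A_n$ into the central simple algebra $A_{n+1}$ differ by an inner automorphism of $A_{n+1}$. Given two systems $(A_n,\varphi_n)$ and $(A_n,\psi_n)$, set $\theta_1 = \id_{A_1}$ and, having built an automorphism $\theta_n$ of $A_n$, choose $u \in A_{n+1}^{\times}$ with $\Inn(u)\circ \varphi_n = \psi_n \circ \theta_n$ and put $\theta_{n+1} = \Inn(u)$. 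The resulting isomorphism of directed systems yields $\dlim(A_n,\varphi_n) \isom \dlim(A_n,\psi_n)$. Your interlacing idea can be made to work along these lines, but only after dropping the incorrect splitness step and using Skolem--Noether in full generality.
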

The statement of \Pref{noarrow} is false for limits over arbitrary directed sets. Indeed one can form a system of matrix algebras with two compatible systems of morphisms, such that the direct limit is countably generated in one, and has no countable basis in the other.

\subsection{Primary decomposition}

Let $\set{A_\lam \suchthat \lam \in \Lambda}$ be an arbitrary set of algebras. The {\bf{infinite tensor product}} $\Tensor A_\lam$ is defined as the direct limit of the tensor products over finite subsets, with the obvious morphisms. If $\Lambda = \set{\lam_1,\lam_2,\dots}$ is countable, the limit is equal to $\dlim_{n} (A_{\lam_1} \tensor \cdots \tensor A_{\lam_n})$.

\begin{exmpl}
	Every supernatural matrix algebra is a countable tensor product of finite matrix algebras, as $\M[\nn](F) = \Tensor_{i < \omega} \M[m_i](F)$ where $\nn = \prod m_i$.
\end{exmpl}

Algebras in $\CC{\omega}$ have a primary decomposition:
\begin{thm}[{\cite[Theorem~13.22]{Local}}]\label{primarydec}
	Let $\alg{A} \in \CC{\omega}$. There is a unique decomposition $\alg{A} \isom \Tensor \alg{A}_p$ where $\deg(\alg{A}_p)$ is a (possibly infinite) $p$-power.
\end{thm}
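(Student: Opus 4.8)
The plan is to mimic the primary decomposition of a finite supernatural number $\nn = \prod_p p^{\alpha_p}$, but carried out at the level of algebras in $\CC{\omega}$. First I would write $\alg A = \dlim_n A_n$ as a direct limit over $\omega$ of finite dimensional central simple algebras, which is possible by the characterization of $\CC{\omega}$. For each $n$, the central simple algebra $A_n$ has a degree $d_n = \deg(A_n)$, and by the theory of finite dimensional central simple algebras one may factor $A_n$ according to the prime factorization of $d_n$: writing $d_n = \prod_p p^{e_{n,p}}$, there is a (not necessarily unique, but canonical up to isomorphism in a suitable sense) decomposition $A_n \isom \bigotimes_p A_{n,p}$ with $\deg(A_{n,p}) = p^{e_{n,p}}$. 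The idea is to show the transition maps $A_n \to A_{n+1}$ can be arranged to respect these $p$-primary factors, at least after passing to a cofinal subsequence, so that setting $\alg A_p := \dlim_n A_{n,p}$ gives an algebra in $\CC{\omega}$ with $\deg(\alg A_p) = \sup_n p^{e_{n,p}}$, a $p$-power (possibly infinite), and $\alg A \isom \bigotimes_p \alg A_p$ by commuting the direct limit past the (infinite) tensor product.

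The key technical point is that one does not actually need the transition maps to respect the factorization on the nose. Here is where I would invoke \Pref{noarrow}: since all direct limits over $\omega$ of a fixed sequence of finite dimensional central simple algebras are isomorphic regardless of the maps, it suffices to (i) produce \emph{some} sequence of finite dimensional central simple algebras $A_{n,p}$ for each prime $p$, with $\deg(A_{n,p})$ a $p$-power and $\bigotimes_p A_{n,p} \isom A_{n}$, such that (ii) $\dlim_n \bigotimes_p A_{n,p} \isom \dlim_n A_n = \alg A$. For (ii) one checks that $\bigotimes_p A_{n,p}$, $n = 1, 2, \dots$, is again a sequence of finite dimensional central simple algebras (the tensor product of finitely many nontrivial $A_{n,p}$, the rest being $F$), and \Pref{noarrow} forces its direct limit to be isomorphic to $\alg A$ — so one is free to choose the most convenient transition maps, namely the tensor products of fixed embeddings $A_{n,p} \hra A_{n+1,p}$ chosen once for each $p$. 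Then $\alg A \isom \dlim_n \bigotimes_p A_{n,p} \isom \bigotimes_p \dlim_n A_{n,p} = \bigotimes_p \alg A_p$, the middle isomorphism being the standard commutation of a countable direct limit with a (cofinally finite) infinite tensor product. The degree statement follows from the degree-of-limit proposition: $\deg(\alg A_p) = \lcm_n \deg(A_{n,p}) = \lcm_n p^{e_{n,p}}$, which is $p^{\sup_n e_{n,p}} \in \{p, p^2, \dots\} \cup \{p^\infty\}$.

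For uniqueness I would argue as follows. Suppose $\alg A \isom \bigotimes_p \alg B_p$ with $\deg(\alg B_p)$ a $p$-power; I want $\alg B_p \isom \alg A_p$. By the multiplicativity of degree on tensor products (the proposition $\deg(\alg A \tensor \alg B) = \deg(\alg A)\deg(\alg B)$, extended to the infinite tensor product by taking limits) one first gets $\deg(\alg A) = \prod_p \deg(\alg B_p)$, which by unique factorization of supernatural numbers pins down $\deg(\alg B_p) = \deg(\alg A_p)$ for every $p$. To upgrade this to an isomorphism of the factors themselves, I would use that in $\CC{\omega}$ two algebras of the same degree which are both "$p$-primary" (degree a $p$-power) are isomorphic — this should be available from \cite{Local}, since an algebra in $\CC{\omega}$ of $p$-power degree is a countably generated locally central simple algebra of that degree, and the classification there (the analogue of the fact that a supernatural matrix algebra is determined by its degree, plus the division-algebra part of the Brauer-type invariant) forces the isomorphism type. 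The main obstacle I anticipate is precisely this last uniqueness step: making sure the invariant that classifies $p$-primary algebras in $\CC{\omega}$ is genuinely just the degree (as for supernatural matrices) versus requiring an additional "infinite Brauer monoid" component — if the latter, one must check that the component of $\alg B_p$ matches that of $\alg A_p$, which in turn follows from the corresponding decomposition of the Brauer-class of $\alg A$ into $p$-primary pieces, again using unique factorization. Modulo citing the appropriate classification result from \cite{Local}, this reduces to the bookkeeping sketched above.
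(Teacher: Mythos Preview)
The paper does not prove this theorem here; it is quoted from \cite[Theorem~13.22]{Local}, and \Sref{sec:CC} explicitly defers all proofs in that section to that reference. There is thus no in-paper argument to compare your proposal against.

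On the proposal itself, two points. First, in the existence half you assert that embeddings $A_{n,p} \hra A_{n+1,p}$ can be ``chosen once for each $p$'', but this needs justification: degree divisibility alone does not force an embedding of central simple algebras (a degree-$p$ division algebra does not embed in $\M[p](F)$). The fix is to use the given inclusion $A_{n,p} \hra A_n \hra A_{n+1}$, take the centralizer $C = \Ce[A_{n+1}]{A_{n,p}}$, and decompose $C$ into its own primary parts; then $A_{n,p} \tensor C_p \isom A_{n+1,p}$, which supplies the required embedding. With this in hand your use of \Pref{noarrow} is legitimate. Second, your first attempt at uniqueness --- that $p$-primary algebras in $\CC{\omega}$ of equal degree must be isomorphic --- is false already in the finite-dimensional case, by the same example. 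You correctly retreat to the Brauer-monoid invariant, but at that point the argument is a citation of the classification in \cite{Local} rather than an independent proof; since that is exactly what the present paper does, this is acceptable.
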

This fact calls attention to the primary components, each containing the algebras of prime-power degree for some prime.
It follows from \cite[Prop.~18.3]{Local} that the algebra $\M[p^{\infty}](F)$ is the zero element in this monoid:
\begin{prop}\label{zero}
	For every algebra $\alg{A} \in \CC{\omega}$ with degree a power of $p$, we have that $\M[p^{\infty}](F) \tensor \alg{A} \isom \M[p^{\infty}](F)$.
\end{prop}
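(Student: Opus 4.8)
The plan is to reduce the statement to the basic absorption fact $\M[p^\infty](F)\tensor\M[p^k](F)\isom\M[p^\infty](F)$ for finite $k$, which is immediate from the definition of $\M[p^\infty](F)$ as $\dlim_t\M[p^t](F)$: tensoring each $\M[p^t](F)$ with $\M[p^k](F)$ just shifts the chain, so the direct limit is unchanged. The work is then to pass from this finite statement to an arbitrary $\alg{A}\in\CC{\omega}$ of $p$-power degree. First I would write $\alg{A}=\dlim_n A_n$ with the $A_n$ finite dimensional central simple of $p$-power degree (possible since $\alg{A}\in\CC{\omega}$ and $\deg(\alg{A})$ is a $p$-power forces the degrees of the subalgebras to be $p$-powers, up to cofinal replacement). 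Then $\M[p^\infty](F)\tensor\alg{A}=\dlim_n(\M[p^\infty](F)\tensor A_n)$ because $-\tensor\M[p^\infty](F)$ commutes with direct limits.

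Next I would observe that each $\M[p^\infty](F)\tensor A_n$ is an algebra in $\CC{\omega}$ whose degree is $\deg(\M[p^\infty](F))\deg(A_n)=p^\infty\cdot p^{k_n}=p^\infty$ (using the multiplicativity of degree, \Pref{} — the proposition that $\deg(\alg{A}\tensor\alg{B})=\deg(\alg{A})\deg(\alg{B})$). By the uniqueness part of the primary decomposition \Tref{primarydec}, or more directly by the structure theory, an algebra in $\CC{\omega}$ with degree exactly $p^\infty$ is isomorphic to $\M[p^\infty](F)$: indeed $\M[p^\infty](F)\tensor A_n$ is countably generated, locally finite dimensional central simple, and contains matrix subalgebras of degree $p^t$ for all $t$ (coming from the $\M[p^\infty](F)$ factor), with all finite dimensional simple subalgebras of $p$-power degree, hence is a countably generated locally matrix algebra of supernatural degree $p^\infty$, which by the classification of supernatural matrices is $\M[p^\infty](F)$. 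So each term in the direct system $\dlim_n(\M[p^\infty](F)\tensor A_n)$ is $\isom\M[p^\infty](F)$.

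Finally, the direct limit of a sequence of copies of $\M[p^\infty](F)$, with whatever transition maps arise, is again $\M[p^\infty](F)$ by \Pref{noarrow}: any direct limit over $\omega$ of a fixed sequence of finite dimensional central simple algebras is independent of the morphisms, and $\M[p^\infty](F)$ itself presents this way (as $\dlim_t\M[p^t](F)$), so $\dlim_n(\M[p^\infty](F)\tensor A_n)\isom\M[p^\infty](F)$; alternatively, degree is preserved under direct limits (the proposition $\deg(\dlim\alg{A}_\gamma)=\lcm\deg(\alg{A}_\gamma)$), giving degree $p^\infty$ again and then invoking the classification. Combining the displayed isomorphisms yields $\M[p^\infty](F)\tensor\alg{A}\isom\M[p^\infty](F)$. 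The main obstacle I anticipate is the bookkeeping in the first step — justifying that $\alg{A}$ of $p$-power degree can be presented as a direct limit over $\omega$ of finite dimensional central simple algebras whose degrees are actually $p$-powers (not merely whose lcm is a $p$-power), which should follow by intersecting a given presentation with the $p$-primary part or by a cofinality argument, but needs to be stated carefully.
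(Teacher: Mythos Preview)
The paper does not prove this proposition here; it cites \cite[Prop.~18.3]{Local}. So there is no in-paper argument to compare against, but your proposal has a genuine gap.

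The step that fails is the assertion that ``an algebra in $\CC{\omega}$ with degree exactly $p^\infty$ is isomorphic to $\M[p^\infty](F)$'', equivalently your ``hence is a countably generated locally matrix algebra''. This is false in general: over suitable base fields there exist infinite-dimensional division algebras $\alg{D}\in\CC{\omega}$ of degree $p^\infty$, and such $\alg{D}$ is certainly not $\M[p^\infty](F)$. The paper itself treats division algebras in $\CC{\omega}$ in the subsection on splitting fields, and the whole purpose of the Brauer monoid $\Br{\omega}(F)$ is that degree alone does not determine an algebra in $\CC{\omega}$. Your justification---that $\M[p^\infty](F)\tensor A_n$ contains $\M[p^t](F)$ for every $t$---only establishes condition \XIV\ of \Sref{Equation}; containing arbitrarily large matrix subalgebras is not the same as being locally matrix.

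What actually works for the finite-dimensional step is an interleaving argument exploiting \Pref{noarrow}: since $A_n\tensor \op{A_n}\isom\M[p^{2k_n}](F)$ (writing $\deg A_n=p^{k_n}$), one has embeddings $\M[p^t](A_n)\hookrightarrow\M[p^{t+2k_n}](F)\hookrightarrow\M[p^{t+2k_n}](A_n)$, and the two cofinal subsequences of the resulting chain have direct limits $\M[p^\infty](F)\tensor A_n$ and $\M[p^\infty](F)$ respectively. Your invocation of \Pref{noarrow} in the last step is also off---that proposition is stated for sequences of \emph{finite-dimensional} central simple algebras, not copies of $\M[p^\infty](F)$---though once each term is genuinely known to be locally matrix the limit is too, and the classification of countably generated locally matrix algebras then applies legitimately. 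Finally, the obstacle you flagged at the end (arranging the $A_n$ to have $p$-power degree) is not a real issue: their degrees divide $\deg(\alg{A})$ by definition of the latter as an $\lcm$.
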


\subsection{The Brauer monoid}

Two finite dimensional central simple algebras are {\bf{Brauer equivalent}} if they coincide after  tensor product with (finite) matrix algebras. The equivalence classes form the Brauer group of the field,~$\Br{}(F)$. A similar notion can be extended to $\CC{\omega}$. In light of \Pref{zero}, allowing tensor product with arbitrary supernatural matrices will collapse the whole structure. On the other hand allowing only finite  matrices does not capture the infinite dimensionality of the algebras.

A supernatural number $\prod p^{\alpha_p}$ is {\bf{locally finite}} if all $\alpha_p < \infty$.
We say that algebras $\alg{A},\alg{B} \in \CC{}$ are {\bf{Brauer equivalent}} if they coincide after tensor product with locally finite supernatural matrix algebras. The equivalence classes compose a monoid $\Br{\omega}(F)$, whose unique maximal subgroup is the direct product of the $p$-primary components of the classical Brauer group. The direct sum is, obviously, the Brauer group itself.

Algebras of $p$-power degree are equivalent if they coincide after tensor product with finite matrix algebras of $p$-power degree. The equivalence classes compose the $p$-primary part of the Brauer monoid, $\Brp{\omega}(F)$. Whereas $\Br{}(F) \isom \bigoplus_p \Brp{}(F)$, we have that $\Br{\omega}(F) \isom \prod \Brp{\omega}(F)$. The zero element of $\Br{\omega}(F)$ is the absolute supernatural matrix algebra, $\M[\pi](F)$, where $\pi = \prod_p p^{\infty}$.

\subsection{Splitting fields}

We say that a field extension $K/F$ {\bf{splits}} an algebra $\alg{A} \in \CC{\omega}$ if $K \tensor[F] \alg{A}$ is a supernatural matrix algebra. For an algebraic extension $\alg{K}/F$, let the {\bf{supernatural dimension}} $\dim(\alg{K})$ be the lcm of the finite dimensions of subfields. We proved that if $\alg{D} \in \CC{\omega}$ is a division algebra, then a subfield $\alg{K} \sub \alg{D}$ splits $\alg{D}$ if and only if $\dim(\alg{K}) = \deg(\alg{D})$. In particular every division algebra splits, and in this sense every algebra in $\CC{\omega}$ is a ``form of matrices'', namely isomorphic to some $\M[\nn](F)$ after a suitable scalar extension.

\section{Cancellation of matrices}\label{s:matab}\label{sec:MpA=A}\label{Equation}

Cancellation is a classical topic in algebra. Lam's monograph \cite{Lam:cm} is devoted to module and matrix cancellation. The latter is concerned with the conditions under which
$\M[n](A) \isom \M[n](B)$
implies $A \isom B$. In this section we consider variations on a similar question: what algebras $A$ satisfy the isomorphism
\begin{equation}\label{eqn}
	\M[n](A) \isom A
\end{equation}
of unital algebras?

Supernatural matrix algebras are an obvious class of examples.
\begin{prop}
	Let $\nn$ be a supernatural number. For a natural number~$n$,
	$$\M[n](F) \tensor \M[\nn](F) \isom \M[\nn](F)$$
	if and only if $n^{\infty} \divides \nn$.
\end{prop}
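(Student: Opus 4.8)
The plan is to prove both implications by unwinding the definition of the supernatural matrix algebra as a direct limit over natural divisors of $\nn$.

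For the "if" direction, assume $n^{\infty} \divides \nn$. Write $n = \prod_{p} p^{e_p}$ (finite product), so the hypothesis says $\alpha_p = \infty$ for every prime $p \divides n$, where $\nn = \prod_p p^{\alpha_p}$. I would first reduce to the prime-power case: using the primary decomposition $\M[\nn](F) \isom \Tensor_p \M[p^{\alpha_p}](F)$ (\Tref{primarydec}, or just the explicit factorization $\nn = \prod p^{\alpha_p}$) and the distributivity $\M[n](F) \isom \Tensor_{p \divides n} \M[p^{e_p}](F)$, it suffices to show $\M[p^{e}](F) \tensor \M[p^{\infty}](F) \isom \M[p^{\infty}](F)$ for each $p \divides n$. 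But this is exactly \Pref{zero} applied to $\alg{A} = \M[p^{e}](F)$, whose degree $p^{e}$ is a power of $p$. Tensoring over all primes then gives
$$\M[n](F) \tensor \M[\nn](F) \isom \Big(\Tensor_{p\divides n}\M[p^{e_p}](F)\Big)\tensor\Big(\Tensor_p \M[p^{\alpha_p}](F)\Big) \isom \Tensor_p \M[p^{\alpha_p}](F) \isom \M[\nn](F),$$
using that for $p \divides n$ we have $\alpha_p = \infty$ so the extra factor is absorbed, and for $p \notdivides n$ nothing is added.

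For the "only if" direction, I would argue by the degree invariant, which is well-defined by the results of \Sref{sec:CC}. We have $\deg(\M[n](F) \tensor \M[\nn](F)) = n \cdot \nn$ and $\deg(\M[\nn](F)) = \nn$, so the isomorphism forces $n \cdot \nn = \nn$ as supernatural numbers. Writing this out primewise, $p^{e_p + \alpha_p} = p^{\alpha_p}$ for all $p$, which for a prime $p \divides n$ (so $e_p \geq 1$) is possible only if $\alpha_p = \infty$. Hence $p^{\infty} \divides \nn$ for every $p \divides n$, i.e. $n^{\infty} \divides \nn$.

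The only genuinely delicate point is making sure the degree is a legitimate isomorphism invariant of $\M[\nn](F)$ as an abstract $F$-algebra — that is, that $\nn$ is recovered from the algebra and does not depend on the chosen presentation as a direct limit. This is precisely the content of the uniqueness statement quoted in the Introduction (every countably generated locally matrix algebra is determined by its supernatural degree) together with the Proposition just before this one asserting $\deg(\M[\nn](F)) = \nn$; both are available from \cite{Local} and the earlier part of the paper, so no new work is needed here. The rest is bookkeeping with supernatural numbers.
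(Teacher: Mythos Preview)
Your proof is correct and follows essentially the same idea as the paper: both directions ultimately rest on the fact that a supernatural matrix algebra is determined up to isomorphism by its supernatural degree, and that degree is multiplicative under tensor product. The paper compresses the whole argument into one line by invoking a proposition (labeled \texttt{mn=mn}, presumably the identity $\M[\mm](F)\tensor\M[\nn](F)\isom\M[\mm\nn](F)$), after which the statement reduces to the elementary equivalence $n\nn=\nn \iff n^{\infty}\nn=\nn \iff n^{\infty}\divides\nn$.

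Your ``only if'' direction is exactly this degree computation. Your ``if'' direction is correct but takes a longer detour through primary decomposition (\Tref{primarydec}) and \Pref{zero}; this works, but once you grant yourself the degree machinery you invoke for the converse, you could equally well argue directly that $\M[n](F)\tensor\M[\nn](F)$ is a countably generated locally matrix algebra of degree $n\nn$, hence isomorphic to $\M[n\nn](F)$, and then $n\nn=\nn$ gives the isomorphism immediately. So the two proofs are the same in spirit; yours just splits the forward direction into finer pieces than necessary.
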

\begin{proof}
	Follows from \Pref{mn=mn}, as $n^\infty\nn = \nn$ if and only if $n^\infty \divides \nn$.
\end{proof}
Therefore, for any set $P$ of natural primes, we have a supernatural matrix algebra $A$ for which $\M[p](F) \tensor A \isom A$ if and only if $p \in P$, namely the supernatural matrix algebra $A = \M[(p^*)^\infty](F)$ where $p^* = \prod_{p \in P}p$.

\smallskip

If $A \isom A^n$ as modules, then clearly $A \isom \End_A(A) \isom \End_A(A^n) \isom \M[n](A)$. We thus say that $A$ is a {\bf{modular solution}} to the equation~\eq{eqn}.  Interestingly, supernatural matrix algebras are a nonmodular solution, since they satisfy IBN (the Invariant Basis Number property, stating that $A^n \isom A^m$ implies $n = m$). Indeed, the direct limit of algebras with IBN (in particular, of finite dimensional algebras) is IBN as well.

\smallskip

\def\XI{{\mbox{$(c_1^*)$}}}
\def\XII{{\mbox{$(c_1)$}}}
\def\XIII{{\mbox{$(c_3)$}}}
\def\XIV{{\mbox{$(c_4^*)$}}}
\def\XV{{\mbox{$(c_4)$}}}
\def\XIIIs{{\mbox{$(c_2)$}}}
\def\XIVs{{\mbox{$(c_4^*)$}}}
\def\XVs{{\mbox{$(c_4^\#)$}}}
\def\XVII{{\mbox{$(c_3^*)$}}}
In light of primary decomposition (\Tref{primarydec}), let us focus on the case where the degree of the matrix algebra is a prime~$p$, in particular $\M[p](F)$ and $\M[p^\infty](F)$. Following \eq{eqn}, we consider an embedding of $\M[p](F) \tensor A$ or $\M[p^{\infty}](F) \tensor A$ into~$A$. The embedding may take three forms: as a  subalgebra; as a factor; or by an isomorphism. (We say that a subalgebra $B$ is a {\bf{factor}} in an algebra~$C$ if $C = B \tensor B'$ for some algebra $B'$; we write this as $B \divides C$). We are thus led to six embedding conditions, two of which are equivalent (see below). Adding the embedding of the matrix and supernatural matrix algebras themselves into $A$, we obtain seven conditions, that are presented with their logical dependencies in \Fref{ALL}. We denote the conditions by \XII, \dots, \XV, \XI, \dots, \XIV\ as in the diagram.

\begin{figure}
	$$\xymatrix@R=0.5pt@C=3pt{
		{} & \XI\ \ \M[p^{\infty}](F) \tensor A \isom A  \ar@/^4ex/@{->}[rddd] \ar@{->}[dd]  \\
		{} & {} & {\phantom{X}} \\
		{} & \XII\ \ \M[p](F) \tensor A \isom A \ar@{->}[dd] \\
		{} & {} & \XVII\ \ \M[p^{\infty}](F) \tensor A \hra A \ar@/^1ex/@{->}[lddd]  \\
		{} & \XIIIs\ \ \M[p](F) \tensor A \divides A \ar@{->}[dd] & \\
		{} & {} & {\phantom{X}} \\
		{} & \XIII\ \ \M[p](F) \tensor A \hra A \ar@{->}[dd] \\
		{} & {} & {\phantom{X}} \\
		{} & \XIV\ \ \M[p^{\infty}](F) \hra A \ar@{->}[dd]  \\
		{} & {} & {\phantom{X}} \\
		{} &  \XV\ \ \forall n: \M[p^n](F) \hra A
	}$$
	\caption{Conditions on unital algebras}\label{ALL}
\end{figure}
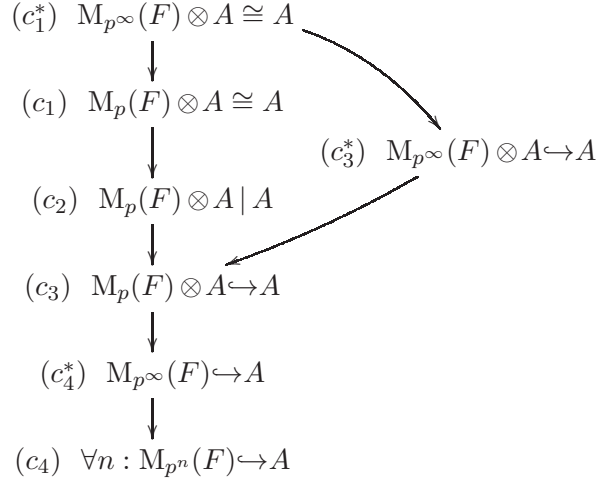

\smallskip
\begin{rem}\label{X1+}
	The algebra $A$ satisfies \XI\ if and only if it has $\M[p^{\infty}](F)$ as a factor. One direction is trivial. On the other hand if $A = \M[p^{\infty}](F) \tensor B$ for some~$B$, then $\M[p^{\infty}](F) \tensor A \isom (\M[p^{\infty}](F) \tensor \M[p^{\infty}](F)) \tensor B \isom \M[p^{\infty}](F) \tensor B = A$.
\end{rem}

In particular, $\M[p^{\infty}](F)$ solves \XI.

\begin{prop}
	We have that $$\XI \implies \XII \implies \XIIIs \implies \XIII \implies \XIV \implies \XV$$
	and $\XI \implies \XVII \implies \XIII$.
\end{prop}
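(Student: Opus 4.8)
The plan is to verify each implication in the chain $\XI \implies \XII \implies \XIIIs \implies \XIII \implies \XIV \implies \XV$ and the side chain $\XI \implies \XVII \implies \XIII$ one at a time, as most of them are essentially immediate from the definitions. I would organize the argument in the order the arrows appear in \Fref{ALL}.

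First, $\XI \implies \XII$: if $\M[p^\infty](F) \tensor A \isom A$, then tensoring both sides with $\M[p](F)$ and using $\M[p](F) \tensor \M[p^\infty](F) \isom \M[p^\infty](F)$ (an instance of \Pref{zero}) gives $\M[p](F) \tensor A \isom \M[p](F) \tensor \M[p^\infty](F) \tensor A \isom \M[p^\infty](F) \tensor A \isom A$. Next, $\XII \implies \XIIIs$ is trivial: an isomorphism $\M[p](F) \tensor A \isom A$ in particular exhibits $\M[p](F) \tensor A$ as a factor of $A$ (with complementary factor $F$, or rather with the isomorphism itself serving as the required decomposition $A \isom (\M[p](F)\tensor A)\tensor F$). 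Then $\XIIIs \implies \XIII$ is again immediate, since a factor is in particular a subalgebra: if $A \isom (\M[p](F)\tensor A) \tensor B'$, then $\M[p](F) \tensor A \hra A$ via $x \mapsto x \tensor 1$. For $\XIII \implies \XIV$: from $\M[p](F) \tensor A \hra A$ we get, in particular, $\M[p](F) \hra A$ (restricting to $\M[p](F)\tensor 1$); iterating the embedding $\M[p](F)\tensor A \hra A$ produces $\M[p^n](F) \tensor A \hra A$ for all $n$, hence $\M[p^n](F) \hra A$ for all $n$, and taking the direct limit over $n$ (with the standard diagonal maps $\M[p^n](F)\hra\M[p^{n+1}](F)$, which are exactly the ones induced by these embeddings) yields $\M[p^\infty](F) = \dlim \M[p^n](F) \hra A$. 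Finally $\XIV \implies \XV$ is trivial, since $\M[p^n](F) \hra \M[p^\infty](F) \hra A$.

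For the side chain: $\XI \implies \XVII$ is trivial, since an isomorphism is in particular an embedding. And $\XVII \implies \XIII$ follows because $\M[p](F) \tensor A \hra \M[p^\infty](F)\tensor A$ via the diagonal embedding $\M[p](F)\hra \M[p^\infty](F)$ tensored with $\id_A$, and composing with $\M[p^\infty](F) \tensor A \hra A$ gives $\M[p](F)\tensor A \hra A$.

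The only step requiring genuine care is $\XIII \implies \XIV$: one must check that the chain of embeddings $\M[p^n](F)\tensor A \hra A$ obtained by iteration is \emph{compatible}, i.e.\ that the resulting maps $\M[p^n](F) \hra \M[p^{n+1}](F)$ agree with the standard diagonal maps whose limit defines $\M[p^\infty](F)$, so that \Pref{noarrow} (or just the explicit construction of $\M[p^\infty](F)$ as a direct limit) lets us pass to the colimit. By \Pref{noarrow} any choice of connecting maps gives the same limit, so in fact no matter which embeddings $\M[p^n](F)\hra\M[p^{n+1}](F)$ the iteration produces, their direct limit is $\M[p^\infty](F)$, and the compatible family of embeddings into $A$ induces the desired embedding $\M[p^\infty](F)\hra A$. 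This is the one place where a previously proved structural fact, rather than a one-line observation, is doing the work.
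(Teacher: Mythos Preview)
Your proof is correct and follows essentially the same route as the paper. The only substantive step, $\XIII \implies \XIV$, is handled the same way: iterate the given embedding $\varphi_1\colon \M[p](F)\tensor A \hra A$ to produce embeddings $\varphi_n\colon \M[p^n](F)\tensor A \hra A$, restrict to $\M[p^n](F)\tensor 1$, and pass to the direct limit. The paper writes the iteration explicitly as $\varphi_{n+1}(x\tensor t\tensor a) = \varphi_n(x\tensor\varphi_1(t\tensor a))$ and checks $\varphi_{n+1}(x\tensor 1\tensor 1) = \varphi_n(x\tensor 1)$ directly, so the connecting maps are literally the standard diagonal ones and no appeal to \Pref{noarrow} is needed; your invocation of \Pref{noarrow} is a harmless safety net but superfluous once the iteration is written out.
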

\begin{proof}
	$\XI \implies \XII$ follows from $\M[p](F) \tensor \M[p^{\infty}](F) \isom \M[p^{\infty}](F)$.
	$\XII \implies \XIIIs$ and $\XIIIs \implies \XIII$ are trivial, and so are $\XIV \implies \XV$ and $\XI \implies \XVII \implies \XIII$.
	We thus prove $\XIII \implies \XIV$.
	We are given an embedding $\varphi_1 \co \M[p](F) \tensor A \hra A$.
	By induction, assume we fixed an embedding $\varphi_n \co \M[p^n](F) \tensor A \hra A$. We then have an embedding $$\varphi_{n+1} \co \M[p^{n+1}](F) \tensor A = \M[p^n](F) \tensor (\M[p](A) \tensor A) \hra \M[p^n](F) \tensor A \hra A,$$ defined by $\varphi_{n+1}(x \tensor t \tensor a) = \varphi_n(x \tensor \varphi_1(t \tensor a))$. In particular $\varphi_{n+1}(x \tensor 1 \tensor 1) = \varphi_n(x \tensor \varphi_1(1 \tensor 1)) = \varphi_n(x \tensor 1)$, so the sequence $\varphi_n \co \M[n](F) \ra A$ is compatible and defines an embedding of the direct limit $\M[p^{\infty}](F) \hra A$.
\end{proof}

The fact that $\XII \implies \XIV$ shows that~$\M[p^{\infty}](F)$ is a minimal solution to the equation $\M[p](A) \isom A$, in the sense that any solution contains a copy of~$\M[p^{\infty}](F)$. We compare this to Leavitt algebras in \Ssref{ssec:LPA} below.

\begin{exmpl}\label{matprod}
	Let $Q = \prod \M[p^n](F)$ be the direct product over all $n\geq 1$, and $J = \bigoplus \M[p^n](F)$ the direct sum, which is an ideal in $Q$. Although $Q$ does not satisfy \XV, 
	$Q/J$ does satisfy~\XII, by sending $x \tensor (a_1,a_2,\dots) + J \mapsto (0,x\tensor a_1, x \tensor a_2, \dots) + J$.
	
	And indeed, we have an embedding $\M[p^{\infty}](F) \hra Q/J$ by the compatible sequence of embeddings $\varphi_n \co a \mapsto (0,\dots,0,a,a\tensor 1,a\tensor 1\tensor 1,\dots)$.
\end{exmpl}

The condition \XI\ is strictly stronger than \XII:
\begin{prop}
	$\XII \ \, \not \!\!\! \implies \XI$.
\end{prop}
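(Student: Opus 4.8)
The plan is to exhibit a concrete $F$-algebra $A$ satisfying $\M[p](F)\tensor A\isom A$ but failing $\M[p^\infty](F)\divides A$, which by \Rref{X1+} is equivalent to failing \XI. The natural source of such an $A$ is the free product / universal construction: let $A$ be the \emph{universal} unital $F$-algebra generated by elements $a,b,c$ subject only to $b^p=0$ and $ab+b^{p-1}c=1$ (the Agnarsson--Amitsur--Robson relations of \Rref{AAR} for $n=p$, $k=1$). By that remark $F[a,b,c]\isom\M[p](F)$, so $A$ is an $\M[p](F)$-ring; writing $A\isom\M[p](F)\tensor_{F}e_{00}Ae_{00}$ via the Peirce decomposition with respect to the matrix units in $F[a,b,c]$, one gets $A\isom\M[p](F)\tensor A_0$ where $A_0=e_{00}Ae_{00}$, and the universal property then forces $\M[p](F)\tensor A\isom A$ (intuitively, adjoining one more copy of $\M[p](F)$ costs nothing in the universal object because one can re-absorb it). I would make this precise by checking that the obvious map $\M[p](F)\tensor A\to A$, $x\tensor y\mapsto (\text{image of }x)\cdot y$ after twisting the generators, has an inverse built from the universal property — this is the routine but slightly fiddly part.

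The heart of the argument is to show $A$ does \emph{not} satisfy \XI, equivalently $\M[p^\infty](F)$ is not a factor of $A$. The cleanest obstruction I would use is a dimension/growth invariant that $\M[p^\infty](F)$-factors must inflate but which is finite for the universal algebra $A$. Concretely, $A$ (or its corner $A_0$) has at most exponential growth: it is generated by finitely many elements with a single quadratic-type relation family, so $\GKdim A<\infty$ or at least $A$ has a well-defined (finite) growth function; whereas if $A\isom\M[p^\infty](F)\tensor B$ then $A$ contains $\M[p^\infty](F)$, which is \emph{not finitely generated} and — more to the point — any algebra having $\M[p^\infty](F)$ as a tensor factor cannot be finitely generated as an algebra. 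But $A$ \emph{is} finitely generated by construction. This is the contradiction: \XI\ forces $A$ to have $\M[p^\infty](F)$ as a factor, hence (\Rref{X1+}) $A\cong\M[p^\infty](F)\tensor B$, which is not finitely generated, contradicting that $A$ is generated by $a,b,c$.

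So the skeleton is: (1) define $A$ as the universal $F$-algebra on $a,b,c$ with $b^p=0$, $ab+b^{p-1}c=1$; (2) use \Rref{AAR} to see $\M[p](F)\hra A$ and the Peirce decomposition to write $A\isom\M[p](F)\tensor A_0$; (3) use the universal property to upgrade this to $\M[p](F)\tensor A\isom A$, i.e.\ $A$ satisfies \XII; (4) observe $A$ is finitely generated while any algebra satisfying \XI\ contains $\M[p^\infty](F)$ as a tensor factor (\Rref{X1+}) and is therefore not finitely generated; (5) conclude $A$ witnesses $\XII\not\!\!\implies\XI$.

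The main obstacle I anticipate is step (3): verifying that the universal algebra genuinely satisfies $\M[p](F)\tensor A\isom A$ rather than merely $\M[p](F)\tensor A\hra A$ or $\M[p](F)\tensor A\divides A$. One must produce mutually inverse homomorphisms, and the subtlety is that in $\M[p](F)\tensor A$ one has a \emph{new} triple of AAR-generators (coming from a root-type construction as in \Pref{AARme}, shifting $b$) whose universal image must land back in $A$ and recover all of $A$; checking surjectivity of the resulting endomorphism of $A$ and that it is an isomorphism is where the real care is needed. If that turns out to be delicate, a fallback is to replace "universal algebra" by the more hands-on model $A=\dlim \M[p^n](F)*_F F$ (free product adding a generator at each stage) or to invoke the theory of \cite{Local} directly; but the finitely-generated-versus-$\M[p^\infty]$ contradiction in steps (4)--(5) is robust regardless of which concrete $A$ is chosen, so the overall strategy does not depend on the particular presentation.
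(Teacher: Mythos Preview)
Your obstruction in step (4) is correct and elegant: if $A \isom \M[p^\infty](F)\tensor B$ with $B$ a nonzero unital $F$-algebra, then any finite set of elements of $A$ lies in some $\M[p^N](F)\tensor B$, which is a proper subalgebra; hence $A$ cannot be finitely generated. So any finitely generated algebra satisfying \XII\ automatically fails \XI\ by \Rref{X1+}.

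The gap is in step (3). The universal $F$-algebra $A$ on the AAR relations is only guaranteed to have the form $A\isom\M[p](A_0)$ with $A_0=e_{00}Ae_{00}$; for \XII\ you need $\M[p](A)\isom A$, i.e.\ $\M[p](A_0)\isom A_0$, and nothing in the universal property delivers that. The universal property of $A$ produces maps \emph{out of} $A$ (one for each AAR triple in the target), not an isomorphism of $A$ with $\M[p](F)\tensor A$. Concretely, an AAR triple in $\M[p](F)\tensor A$ yields a homomorphism $A\to\M[p](F)\tensor A$, but there is no reason its image should be all of $\M[p](F)\tensor A$, nor that it should be injective; the ``re-absorbing'' intuition does not go through. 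Your fallback constructions are too vague to repair this.

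The easy fix is already in the paper: take $A = L_F(1,p)$. It is finitely generated (by $x_0,\dots,x_{p-1},y_0,\dots,y_{p-1}$) and satisfies $\M[p](F)\tensor L_F(1,p)\isom L_F(1,p)$ via $e_{ij}\tensor\ell\mapsto x_i\ell y_j$ (see \eq{niceisom}). Your step (4) then finishes the argument.

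With this correction, your proof is genuinely different from the paper's. The paper builds $A=T(F)$, the sum of finite matrices and $p^\infty$-recurrent matrices inside $\rcfin$, verifies \XII\ directly, and then invokes Abrams' theorem \cite{GAb} (isomorphism of such intermediate matrix rings forces Morita equivalence of the coefficient rings) to rule out $T(F)\isom T(\M[p^\infty](F))$. Your finite-generation obstruction is more elementary---it avoids the Morita-equivalence machinery entirely---and exploits the Leavitt algebra that the paper already has on hand. The paper's example, on the other hand, has the virtue of being an IBN algebra (so a \emph{nonmodular} solution of \XII), whereas $L_F(1,p)$ is the prototypical modular one.
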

\begin{proof}
	We construct an algebra $A$, for which $\M[p](A)\isom A$ but $\M[p^\infty](F)\tensor A \not\cong A$. For an $F$-algebra $R$, let $\rcfin[R]$ denote the algebra of row and column finite matrices over $R$. The algebra of finite matrices is an ideal of $\rcfin[R]$, and as described in \Cref{recurr}, $\M[p^{\infty}](R)$ is the subring of $p^{\infty}$-recurrent matrices over $R$. Let $T(R)$ denote their sum.  Consider the algebra $A = T(F)$.
	It is easy enough to see that $\M[p](F) \tensor A \isom A = T(F)$, so $A$ satisfies~\XII.
	
	We also have that $\M[p^{\infty}](F) \tensor A = \M[p^{\infty}](F) \tensor T(F) = T(\M[p^{\infty}](F))$.
	Gene Abrams proved in \cite{GAb} that when $A$ and $B$ are simple, if $T(A) \isom T(B)$ and~$T$ is an intermediate algebra between finite and row-finite matrices, then $A$ and~$B$ are Morita equivalent. Since~$F$ and $\M[p^{\infty}](F)$ are not Morita equivalent, we have that $A = T(F) \not \cong T(\M[p^{\infty}](F)) = \M[p^{\infty}](F) \tensor A$, so $A$ does not satisfy~\XI.
\end{proof}

In particular $\XIV \ \, \not \!\!\! \implies \XI$: unlike standard matrix algebras, a subalgebra isomorphic to $\M[p^{\infty}](F)$ is not necessarily a factor.

\begin{rem}
	Each of the seven conditions is ``closed upwards'', in the sense that if $A$ satisfies it, then so does every algebra in which $A$ is a factor.
\end{rem}

\begin{rem}
	Raising to an infinite tensor power $A^{\tensor \infty} = \dlim A^{\tensor n}$ transfers solutions of some equation to solutions of more prominent ones, as follows. If $A$ satisfies \XII, then $A^{\tensor \infty}$ satisfies \XI.  If $A$ satisfies \XIIIs, namely $\M[p](F) \tensor A \tensor B \isom A$, then $A \tensor B^{\tensor \infty}$ satisfies \XII.
\end{rem}

\medskip

Let the $p$-{\bf{matrix degree}} of an algebra $B$ be the supremum of the set $\set{p^n \suchthat \M[p^n](F) \hra B}$.

\begin{prop}
	The conditions $\XI, \dots, \XV, \XVII$ are equivalent for infinite tensor products of 
	algebras of finite $p$-matrix degree. 
\end{prop}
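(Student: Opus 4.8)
The plan is to prove the one missing implication $\XV \implies \XI$: the excerpt already gives $\XI \implies \XII \implies \XIIIs \implies \XIII \implies \XIV \implies \XV$ together with $\XI \implies \XVII \implies \XIII$, so this single implication collapses all seven conditions into one. By \Rref{X1+}, condition $\XI$ says exactly that $\M[p^{\infty}](F)$ is a tensor factor of $A$; hence, assuming $\M[p^n](F)\hra A$ for every $n$, the task is to produce such a factor.

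Fix a presentation $A = \Tensor_{i\in I}A_i$ with all $p$-matrix degrees $\mu_p(A_i)$ finite, and write $A_S = \Tensor_{i\in S}A_i$. Two elementary facts are used throughout. First, a finite-dimensional central simple $F$-subalgebra splits off as a tensor factor: if $\M[p^n](F)\hra R$ sharing the unit, then $R\isom\M[p^n](F)\tensor\Ce[R]{\M[p^n](F)}$. Consequently ``$\M[p^n](F)\hra R$'' and ``$\M[p^n](F)\divides R$'' are the same condition, $\mu_p(R)=p^{c(R)}$ where $p^{c(R)}$ is the largest $p$-power matrix tensor factor of $R$, and $c$ is superadditive: $c(A_{S\cup S'})\geq c(A_S)+c(A_{S'})$ for disjoint $S,S'$. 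Second, $\M[p^n](F)$ is finitely generated, so every embedding $\M[p^n](F)\hra A$ factors through some $A_S$ with $S$ finite.

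Using these, build pairwise disjoint nonempty finite sets $T_1,T_2,\dots\subseteq I$ with $\M[p](F)\divides A_{T_j}$ for all $j$: given disjoint $T_1,\dots,T_k$ with union $U$, search for a finite $T_{k+1}\subseteq I\setminus U$ with $\mu_p(A_{T_{k+1}})>1$. If the search always succeeds, then writing $A_{T_j}\isom\M[p](F)\tensor C_j$ gives $\Tensor_j A_{T_j}\isom\bigl(\Tensor_j\M[p](F)\bigr)\tensor\Tensor_j C_j\isom\M[p^{\infty}](F)\tensor\Tensor_j C_j$, and since $\Tensor_j A_{T_j}$ is a tensor factor of $A$ we get $\M[p^{\infty}](F)\divides A$, i.e.\ $\XI$. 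If instead the search fails at some $U$, then no finite $T\subseteq I\setminus U$ has $\mu_p(A_T)>1$; as $\M[p](F)$ is finite-dimensional this forces $\mu_p(A_{I\setminus U})=1$, so $A = A_U\tensor A_{I\setminus U}$ with $\mu_p(A_{I\setminus U})$ finite but $\mu_p(A)=\infty$. Thus in every case $\XI$ follows from the assertion
\[(\star)\qquad \mu_p(X),\,\mu_p(Y)<\infty\ \text{ and }\ \mu_p(X\tensor Y)=\infty\ \Longrightarrow\ \M[p^{\infty}](F)\divides X\tensor Y,\]
applied with $X=A_U$ and $Y=A_{I\setminus U}$; and if $\mu_p(A_U)$ is itself infinite, a short induction on $|U|$, writing $A_U=A_{U'}\tensor A_i$ and invoking $(\star)$ at each step, reduces to this (singletons have finite $p$-matrix degree, so the induction bottoms out).

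The genuine content, and the step I expect to be the main obstacle, is $(\star)$ --- delicate because $\mu_p$ is \emph{not} submultiplicative under $\tensor$: an infinite-dimensional division algebra $D$ can satisfy $\mu_p(D)=1$ while $D\tensor D\isom\M[p^{\infty}](F)$. To prove $(\star)$ I would first split off the matrix tensor factors of $X$ and $Y$ to reduce to $\mu_p(X)=\mu_p(Y)=1$, and then control the matrix parts of finite sub-pieces via the estimate that $\mu_p(D\tensor E)$ is bounded, for fixed finite-dimensional $E$, as $D$ ranges over central simple algebras: writing $E\isom\M[p^{c(E)}](F)\tensor E_0$ with $\mu_p(E_0)=1$, the inequality $\ind(D\tensor E_0)\geq\ind(D)/\ind(E_0)$ pins the matrix part of $D\tensor E_0$ down to $E$ alone. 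Propagating this bound through the building blocks of $X$ and $Y$ should show $\mu_p(X\tensor Y)$ can be infinite only if the matrix parts already accumulate to $\M[p^{\infty}](F)$; carrying out this bookkeeping --- and in particular handling the matrix-factor reductions, which brush against matrix-cancellation phenomena --- is the hard part.
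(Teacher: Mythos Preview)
Your opening matches the paper's: reduce to $\XV\Rightarrow\XI$ and exhibit $\M[p^\infty](F)$ as a tensor factor of~$A$. The paper then simply asserts that under~$\XV$ there must be infinitely many indices~$\lambda$ with $\M[p](F)\hra A_\lambda$, splits one copy of $\M[p](F)$ off each, and collects them into $\M[p^\infty](F)$. You correctly see that this assertion is unjustified as stated: with $A=\Tensor_n D$ for $D$ a degree-$p$ division algebra of period~$2$, no single factor contains $\M[p](F)$ even though $A\isom\M[p^\infty](F)$. Your remedy --- replace single indices by finite blocks~$T_j$ --- is the natural fix, and when the search for such blocks succeeds indefinitely your argument is complete and genuinely sharpens the paper's.

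The gap is that you have not closed the remaining case. When the block search terminates at some finite~$U$, you fall back on~$(\star)$, which you do not prove; your sketch toward it invokes the index inequality $\ind(D\tensor E_0)\geq\ind(D)/\ind(E_0)$, valid only for finite-dimensional central simple algebras, whereas the hypothesis on the~$A_i$ is merely finite $p$-matrix degree --- they may be infinite-dimensional and need not even lie in~$\CC{}$. Your inductive reduction bottoms out at a single factor $A_i$ tensored against a complementary piece, but such an $A_i$ is still not finite-dimensional central simple, so the Brauer-theoretic bound does not apply. In short, the subtlety you diagnosed in the paper's one-line argument has been relocated to~$(\star)$ rather than resolved; the proof remains incomplete at precisely the point you yourself flag as ``the hard part''.
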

In particular, the conditions are equivalent in the class $\CC{\omega}$ (\Ssref{ss:31}), where every algebra is a countable tensor product of finite dimensional central simple algebras.
\begin{proof}
	It suffices to prove $\XV \implies \XI$. Write $A = \Tensor A_{\lam}$, where $A_{\lam}$ are of finite matrix degree. By assumption there must be at least countably many algebras $A_{\lam}$ containing copies of $\M[p](F)$. Since finite matrix subalgebras are factors, each of those has the form $\M[p](F) \tensor A_{\lam}'$, and the infinite tensor product of the matrix factors is equal to $\M[p^\infty](F)$, thus showing that \XI\ holds by \Rref{X1+}.
\end{proof}

We say that a class ${\mathcal{M}}$ of algebras is {\bf{cancellable}} if $\M[p](F) \tensor A \isom \M[p](F) \tensor B$ implies $A \isom B$ for $A,B \in \mathcal{M}$, and {\bf{closed}} if $\M[p](F) \tensor A \in \mathcal{M}$ implies $A \in \mathcal{M}$.  For example, $\CC{}$ is cancellable and closed, essentially because the class of finite dimensional central simple algebras is cancellable and closed.
\begin{prop}
	Let $\mathcal{M}$ be a cancellable and closed class of algebras. Then $\XV \implies \XIV$ for the algebras in $\mathcal{M}$.
\end{prop}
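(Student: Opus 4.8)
The plan is to promote the a priori unrelated embeddings supplied by \XV\ into a single coherent tower $\M[p](F) \sub \M[p^2](F) \sub \cdots$ inside $A$, whose union is a copy of $\M[p^{\infty}](F)$; this is in the spirit of the proof that $\XIII \implies \XIV$, except that here one is handed only the subalgebras $\M[p^n](F) \hra A$, not an embedding of $\M[p](F) \tensor A$ into $A$, so an extra ingredient is needed. That ingredient is the standard fact that a \emph{unital} finite dimensional central simple subalgebra splits off as a tensor factor: whenever $\M[p^n](F) \hra R$ unitally, the multiplication map gives $R \isom \M[p^n](F) \tensor \Ce[R]{\M[p^n](F)}$. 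Combined with closedness of $\mathcal M$ this lets us peel off one matrix factor at a time without leaving $\mathcal M$, and cancellation will force the successive ``complements'' to line up.

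Concretely, I would build a sequence $A = A_0, A_1, A_2, \dots$ of algebras in $\mathcal M$, each satisfying \XV, together with isomorphisms $\theta_k \co \M[p](F) \tensor A_{k+1} \isom A_k$. Given $A_k \in \mathcal M$ satisfying \XV, choose a unital copy of $\M[p](F)$ in $A_k$ and let $A_{k+1}$ be its centralizer, so $\theta_k$ exists and $A_{k+1} \in \mathcal M$ by closedness (since $\M[p](F) \tensor A_{k+1} \isom A_k \in \mathcal M$). The one nontrivial point is that $A_{k+1}$ again satisfies \XV. Fix $n \geq 1$; by \XV\ for $A_k$ we have $\M[p^n](F) \hra A_k$, so $A_k \isom \M[p^n](F) \tensor B = \M[p](F) \tensor (\M[p^{n-1}](F) \tensor B)$ where $B = \Ce[A_k]{\M[p^n](F)}$, and since $A_k \in \mathcal M$ closedness gives $\M[p^{n-1}](F) \tensor B \in \mathcal M$. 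Then
\[
	\M[p](F) \tensor A_{k+1} \;\isom\; A_k \;\isom\; \M[p](F) \tensor (\M[p^{n-1}](F) \tensor B),
\]
with $A_{k+1}$ and $\M[p^{n-1}](F) \tensor B$ both in $\mathcal M$, so cancellation yields $A_{k+1} \isom \M[p^{n-1}](F) \tensor B$; in particular $\M[p^{n-1}](F) \hra A_{k+1}$. Since $n$ was arbitrary, $A_{k+1}$ satisfies \XV, and the induction goes through.

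Finally I would assemble the tower. Composing the $\theta_k$ gives isomorphisms $\psi_k \co \M[p^k](F) \tensor A_k \isom A$ with $\psi_0 = \id_A$ and $\psi_{k+1} = \psi_k \circ (\id_{\M[p^k](F)} \tensor \theta_k)$, under the identification $\M[p^{k+1}](F) = \M[p^k](F) \tensor \M[p](F)$. Setting $\alpha_k \co \M[p^k](F) \to A$, $\alpha_k(x) = \psi_k(x \tensor 1)$, a one-line check shows that $\alpha_{k+1}$ restricted to $\M[p^k](F)$ along the standard embedding $x \mapsto x \tensor 1$ equals $\alpha_k$. Hence the $\alpha_k$ form a compatible system over the directed system defining $\M[p^{\infty}](F) = \dlim \M[p^k](F)$, and induce a unital homomorphism $\M[p^{\infty}](F) \to A$; it is nonzero (since \XV\ forces $A \neq 0$) and $\M[p^{\infty}](F)$ is simple, so it is injective. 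This is precisely \XIV.

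The crux — and the only place the hypotheses really enter — is the inductive step showing each $A_{k+1}$ inherits \XV: a bare family of mutually incompatible embeddings $\M[p^n](F) \hra A$ carries no internal coherence, and it is exactly cancellation, applied after splitting off a single $\M[p](F)$ factor and kept applicable by closedness, that forces the centralizers to match up as $n$ varies. The remaining bookkeeping in the assembly step — that $\psi_{k+1}$ genuinely restricts to $\psi_k$ — is routine, but requires a little care with the identification $\M[p^{k+1}](F) = \M[p^k](F) \tensor \M[p](F)$.
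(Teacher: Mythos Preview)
Your proof is correct and follows essentially the same strategy as the paper: split off matrix factors via centralizers, use closedness to keep the complements in $\mathcal{M}$, use cancellation to align them, and then build a compatible tower $\M[p](F) \sub \M[p^2](F) \sub \cdots$ inside $A$. The only difference is organizational---the paper first defines $A_n$ as the centralizer of the \emph{given} embedding $\M[p^n](F)\hra A$ and applies cancellation to consecutive $A_n$'s, whereas you peel off one $\M[p](F)$ at a time and re-verify \XV\ at each stage---and your version handles the compatibility of the resulting tower more explicitly than the paper does.
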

\begin{proof}
	For every $n$, there is by assumption an embedding $\M[p^n](F) \ra A$. We can therefore decompose $A = \M[p^n](F) \tensor A_n$. Since $\mathcal{M}$ is closed, $A_n \in \mathcal{M}$ and $\M[p](F) \tensor A_{n+1} \in \mathcal{M}$.  Now $\M[p^n](F) \tensor A_n = A = \M[p^n](F) \tensor (\M[p](F) \tensor A_{n+1})$, so by cancellation, $A_n \isom \M[p](F) \tensor A_{n+1}$. We now construct a chain of finite matrix algebras, ordered by inclusion, inside $A$. Once $M_n = \M[p^n](F)$ is chosen, we fix a copy $\M[p](F) \isom Q_n \sub A_n$ and choose $M_{n+1} = M_n \tensor Q_n$. The limit $\dlim M_n = \M[p^\infty](F)$ is a subalgebra of $A$.
\end{proof}

The implication $\XII \implies \XIV$ has a stronger version for algebras $\alg{A} \in \CC{\omega}$: there is an isomorphism $\alg{A} \isom \alg{B} \tensor \alg{A}$ (with $\alg{B} \in \CC{\omega}$) if and only if $\alg{A}$ contains a copy of $\M[p^{\infty}](F)$ for some prime~$p$ (\cite[Cor.~19.6]{Local}).

\section{The algebra of deep matrices}\label{s:deep}

Our construction of supernatural matrices generalizes McCrimmon's algebra of deep matrices, introduced in \cite{DM1}.
\subsection{The homogeneous case}
Let $m$ be a natural number, and consider the supernatural power $\mm = m^{\infty}$. Thus $\mm$-adic words are nothing but numbers in base $m$ representation.

The algebra $\DM^m$ of {\bf{deep matrices on $m$ letters}} is generated by the elements $d_{u,v}$, where $u,v$ are arbitrary (finite) $\mm$-adic words, of any length; subject to the multiplication rule \eq{dmall} (we reverse the index sequences with respect to McCrimmon's notation, to conform with the standard decimal representation of the integers).

The algebra $\DM^m$ is graded over $\Z$ by $\deg(d_{u,v}) = \abs{u}-\abs{v}$. Matrix units of degree zero, namely $d_{u,v}$ with $\abs{u} = \abs{v}$, generate the degree zero component~$\DM^m_0$, termed ``balanced deep matrix algebra'' in \cite[Definition~4.2]{DM7}.

Let $\alg{V}_{\mm}$ be the vector space spanned by left-infinite $\mm$-adic sequences (thus $\dim(\alg{V}_{\mm}) = \aleph_0$). Let $\epsilon_{u,v} \in \End(\alg{V}_\mm)$ be the ``chopping and sewing heads'' operators
\begin{equation}\label{dmact}
	{\epsilon}_{u,v}\pi = \begin{cases} \pi' u & \mbox{if $\pi = \pi'v$},\\
		0 & \mbox{otherwise.}\end{cases}
\end{equation}
The ``Frankenstein action'' of $\DM^m$ on $\alg{V}_{\mm}$ is defined by sending $d_{u,v}$ to $\epsilon_{u,v}$; indeed, the $\epsilon_{u,v}$ satisfy \eq{dmall}.
We thus have a map $\Phi \co \DM^m \ra \End(\alg{V}_\mm)$. It is shown in~\cite{DM2} (when the letter set is finite, as in our case), that the kernel of~$\Phi$ is the unique nonzero ideal of $\DM^m$.

Let us explain how the supernatural matrix algebra $\M[\mm](F)$ fits into this picture.
The countably dimensional space $F^{\mm}$ (\Ssref{ss:nadic}) embeds into~$\alg{V}_\mm$ by sending a basis element~1$u$ to $\pi = {\cdots 000u}$. Now, because~\eq{dmact} is effectively identical with~\eq{Mmact}, the action of $\M[\mm](F)$ on $F^{\mm}$ extends to an action on~$\alg{V}_\mm$, defining an embedding $\M[\mm](F) \hra \End(\alg{V}_{\mm})$ by $e_{u,v} \mapsto \epsilon_{u,v}$. Indeed, the operators $\epsilon_{u,v}$ satisfy the relations~\eq{dmrel1}--\eq{dmrel2}.

\begin{figure}[!h]\label{Franky}
	$$\xymatrix{\DM^m_0 \ar@{^(->}[r] \ar@{->}[d]  & \DM^m \ar@{->}[d]^{\Phi} \\ \M[\mm](F) \ar@{^(->}[r] & \End(\alg{V}_{\mm})}$$
	\caption{Frankenstein action of deep matrices and supernatural matrices}
\end{figure}
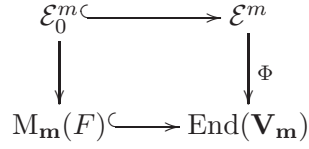

We now have a map $\DM^m_0 \ra \M[\mm](F)$ defined by sending a deep generator~$d_{u,v}$ to the respective supernatural matrix unit $e_{u,v}$; and the diagram in \Fref{Franky} is easily seen to commute. It follows that:
\begin{prop}\label{dmonto}
	The supernatural matrix algebra $\M[\mm](F)$, where $\mm = m^{\infty}$, is a homomorphic image of the algebra~$\DM_0^m$ of balanced deep matrices on~$m$ letters.
\end{prop}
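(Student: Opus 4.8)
The plan is to make precise the commutative square in \Fref{Franky} and read off the statement from it. First I would observe that the balanced deep matrix algebra $\DM_0^m$ is, by definition, generated as a unital $F$-algebra by the degree-zero generators $d_{u,v}$ with $\abs{u}=\abs{v}$, and these satisfy the multiplication rule \eq{dmall} restricted to equal-length words; restricting \eq{dmall} to that case gives exactly the relation $d_{u,u'}d_{w,w'}=\delta_{u'w}d_{uw'}$ together with the consequences of shifting. So the assignment $d_{u,v}\mapsto e_{u,v}$, sending each balanced deep generator to the supernatural matrix unit with the same index word, is a candidate algebra homomorphism $\DM_0^m\to\M[\mm](F)$.

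Second I would verify this assignment is well defined, i.e. that the $e_{u,v}$ (for $\abs{u}=\abs{v}$) satisfy all the defining relations of $\DM_0^m$. The only relations are the multiplication rule \eq{dmall} (equal-length case), and this is precisely relation \eq{dmrel1} of the generating theorem for $\M[\nn](F)$ once one checks that the supernatural matrix units obey the ``shift'' identity \eq{dmrel3} — which they do, since $\M[\mm](F)$ is exactly the direct limit built from those relations. Concretely: $\M[\mm](F)=\dlim\M[m^t](F)$ with matrix units indexed by $\mm$-adic words, and the extended multiplication rule for words of arbitrary (equal) length is exactly \eq{dmall}; hence the universal property of $\DM_0^m$ (presented by generators and the relation \eq{dmall}) yields the homomorphism $\psi\co\DM_0^m\to\M[\mm](F)$, $d_{u,v}\mapsto e_{u,v}$.

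Third I would show $\psi$ is surjective. This is immediate: the image contains every matrix unit $e_{u,v}$ with $\abs{u}=\abs{v}$, and by the generating theorem (\Tref{} on supernatural matrix units) these generate $\M[\mm](F)$ as a unital algebra — indeed already the $e_{u,v}$ of a fixed length $t$ span $\M[m^t](F)$, and taking the union over all $t$ gives all of $\M[\mm](F)$. Note $\psi$ sends $d_{\emptyset,\emptyset}$ to $e_{\emptyset,\emptyset}=1$, so it is unital. To make the picture transparent I would also note that $\psi$ is the left vertical map in \Fref{Franky}: composing $\DM_0^m\hookrightarrow\DM^m\xrightarrow{\Phi}\End(\alg{V}_\mm)$ agrees, on the generators $d_{u,v}$ with $\abs u=\abs v$, with the embedding $\M[\mm](F)\hookrightarrow\End(\alg{V}_\mm)$ sending $e_{u,v}\mapsto\epsilon_{u,v}$, since \eq{dmact} and \eq{Mmact} are the same formula; so the square commutes and $\psi$ is forced to be the stated map.

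I do not expect a genuine obstacle here; the content is really the bookkeeping that the restriction of McCrimmon's multiplication rule \eq{dmall} to balanced indices coincides with the direct-limit relations \eq{dmrel1}–\eq{dmrel3} defining $\M[\mm](F)$. The one point that deserves care is the well-definedness in step two: one must confirm that $\DM_0^m$ carries \emph{no} relations beyond \eq{dmall} (so that there is nothing further to check on the $e_{u,v}$ side), and conversely that every defining relation of $\M[\mm](F)$ is a consequence of \eq{dmall} among balanced generators — in particular that the unit relation \eq{dmrel2}, $\sum_{\abs w=t}e_{ww}=1$, holds in the image, which follows because $\sum_{\abs w = t} d_{ww}$ is the identity of $\DM_0^m$ (this is part of how the balanced deep matrix algebra is set up in \cite[Definition~4.2]{DM7}). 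Once these identifications are in place, \Pref{dmonto} is exactly the assertion that $\psi$ is a surjective unital $F$-algebra homomorphism.
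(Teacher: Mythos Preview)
Your core argument is correct and matches the paper's: the map $\psi\co\DM_0^m\to\M[\mm](F)$, $d_{u,v}\mapsto e_{u,v}$, is exactly the left vertical arrow in \Fref{Franky}, and the paper obtains it just as you describe in your third step, by factoring the Frankenstein action $\Phi|_{\DM_0^m}$ through the embedding $\M[\mm](F)\hookrightarrow\End(\alg{V}_\mm)$. Surjectivity then follows because the matrix units $e_{u,v}$ generate $\M[\mm](F)$.

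However, your final paragraph contains a genuine error that you should remove. The claim that $\sum_{\abs{w}=t} d_{ww}$ is the identity of $\DM_0^m$ is \emph{false} for $t\geq 1$: the element $d_{\emptyset\emptyset}-\sum_i d_{ii}$ is nonzero in $\DM^m$ (and in $\DM_0^m$), and indeed the paper's \Rref{} immediately following \Pref{dmonto} notes that this element lies in the kernel of the Frankenstein action. The relation \eq{dmrel3} is precisely what is \emph{imposed} in $\M[\mm](F)$ but \emph{not} in $\DM_0^m$; this is the source of the kernel of $\psi$. Fortunately, you do not need that claim at all: to show $\psi$ is a well-defined surjection you only need relations of the \emph{source} to hold in the \emph{target} (which they do, since the $e_{u,v}$ satisfy \eq{dmall}), and the image to contain a generating set of the target (which it does). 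The ``converse'' direction you worry about --- that relations of $\M[\mm](F)$ follow from those of $\DM_0^m$ --- would only be relevant for injectivity, which is neither claimed nor true. The commutative-diagram argument you already gave in step three is the cleanest route and is exactly how the paper proceeds; it sidesteps any need to enumerate the relations of $\DM_0^m$.
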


\begin{rem}
	The action of the deep matrix algebra $\DM$ on $F^{\nn}$ is not faithful because $d_{\emptyset\emptyset}-\sum d_{ii}$ acts trivially. In comparison, the action of $\DM$ on $\bigoplus F^{m^t}$ is faithful by \cite[Theorem~2]{DM2}.
\end{rem}

\subsection{The general case}

Although deep matrices were only defined for a fixed set of letters (namely for $\mm = m^{\infty}$), most of the above construction extends easily to an arbitrary supernatural number $\nn$.

The algebra $\DM^\nn$ can be defined as above, and similarly has a $\Z$-grading. We thus have the balanced deep matrix algebra $\DM^\nn_0$. The space $\alg{V}_{\nn}$ is defined with no problem as well. However, the action of $\DM^\nn$ on $\alg{V}_{\nn}$ is not defined, because a generator $e_{u,v}$ with $\abs{u} \neq \abs{v}$ cannot shift an $\nn$-adic sequence, as the letter set depends on the position. Nevertheless, the action of $\DM^\nn_0$ {\it{is}} defined.
Indeed, since the relations \eq{dmall} provide the full multiplication table, it is easy to see that $\DM^\nn_0$ is generated by the balanced deep matrix units, subject only to \eq{dmall} (restricted to balanced generators), so $\Phi_0 \co \DM^\nn_0 \ra \End(\alg{V}_\nn)$ is well defined by $\Phi_0(d_{uv}) = \epsilon_{uv}$. We obtain the diagram in \Fref{deepF}, to be extended below.
\begin{figure}[!h]$$\xymatrix{\DM^\nn_0 \ar@{^(->}[r] \ar@{->}[d] \ar@{->}[dr]^{\Phi_0} & \DM^\nn  \\ \M[\nn](F) \ar@{^(->}[r] & \End(\alg{V}_{\nn})}$$
	\caption{Generalized deep matrices and supernatural matrices}
	\label{deepF}
\end{figure}
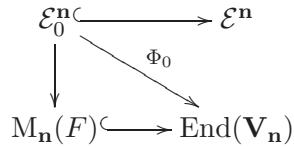

\section{Supernatural matrices and Leavitt algebras}\label{sec:LPA}

We offer a concrete realization of the Leavitt algebra as infinite matrices, based on their action on $\alg{V}_m$ and its connection with supernatural matrices. This connection can be exploited through nonstandard gradings, which leads to the characterization of elementary gradings for supernatural matrices.

For a recent review on Leavitt algebras see \cite{Lpa}. Let $m$ be a natural number. The Leavitt  algebra $L_F(1,m)$ of the $m$-petal is defined by the presentation
$$L_F(1,m) = F\sg{x_0,\dots,x_{m-1},y_0,\dots,y_{m-1} \subjectto y_i x_j = \delta_{ij}, \ \sum x_i y_i = 1}.$$

Fix the supernatural exponent $\mm = m^{\infty}$ as in \Sref{s:deep}. For an $\mm$-adic number $v = (i_{t-1},\dots,i_0)$, we write
$x_v = x_{i_{0}} \cdots x_{i_{t-1}}$ and $y_v = y_{i_{t-1}}\cdots y_{i_{0}}$ (notice the order inversion).

Since elements of the form $y_i x_j$ can be reduced, the Leavitt algebra is spanned by elements of the form $x_uy_v$ (where $u,v$ are of arbitrary length). The multiplication rule can easily be deduced from the defining relations:
\begin{equation}\label{xyrel}
	x_{u}y_{u'}\cdot x_{w}y_{w'} = \begin{cases}
		x_{vu}y_{w'} & \mbox{if $w=vu'$},\\
		x_{u}y_{vw'} & \mbox{if $u'=vw$},\\
		0 & \mbox{if neither $u'$ nor $w$ is a tail of the other.}\end{cases}
\end{equation}
The standard $\Z$-grading of $L_F(1,m)$ is given by $\deg(x_i) = - \deg(y_i) = 1$.
As already described in \cite{DM8}, we have:

\begin{prop}\label{DMtoLP}
	There is a degree-preserving projection $\DM^m \ra L_F(1,m)$ by $e_{u,v} \mapsto x_{u}y_v$.
\end{prop}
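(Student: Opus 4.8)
The plan is to produce the homomorphism directly on generators and relations, exploiting the fact that $\DM^m$ is (by definition) the free $F$-algebra on generators $d_{u,v}$ (for $u,v$ arbitrary $\mm$-adic words) modulo the multiplication rule \eq{dmall}, while $L_F(1,m)$ is spanned by the monomials $x_uy_v$ with the product law \eq{xyrel}. Since \eq{xyrel} is literally \eq{dmall} with $e_{u,v}$ replaced by $x_uy_v$, the assignment $d_{u,v}\mapsto x_uy_v$ respects all the defining relations of $\DM^m$, hence extends (uniquely) to an $F$-algebra homomorphism $\Psi\co\DM^m\ra L_F(1,m)$. So the first step is simply to verify that \eq{xyrel} holds in $L_F(1,m)$ — this is the "easily deduced" computation alluded to just before the statement, using $y_ix_j=\delta_{ij}$ repeatedly to cancel interior $y$'s against $x$'s, and it is the only genuine calculation needed; I would either cite the cited reference \cite{DM8} or spell out the three cases briefly.

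Next I would check that $\Psi$ is surjective. This is immediate: the elements $x_uy_v$ span $L_F(1,m)$ (again using $y_ix_j=\delta_{ij}$ to reduce any word in the $x$'s and $y$'s to this normal form, together with $\sum x_iy_i=1$ for the empty-word case), and each such element lies in the image of $\Psi$. In particular $x_i=x_iy_\emptyset=\Psi(d_{i,\emptyset})$ and $y_i=x_\emptyset y_i=\Psi(d_{\emptyset,i})$, so even the generators of $L_F(1,m)$ are hit, which makes surjectivity transparent.

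Finally I would verify that $\Psi$ is degree-preserving for the stated gradings: $\DM^m$ is $\Z$-graded by $\deg(d_{u,v})=\abs{u}-\abs{v}$, and under the standard grading of $L_F(1,m)$ with $\deg(x_i)=1$, $\deg(y_i)=-1$, the monomial $x_uy_v$ has degree $\abs{u}-\abs{v}$; so $\Psi$ carries the degree-$k$ component into the degree-$k$ component. This is just bookkeeping on the two generating families.

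There is essentially no serious obstacle here — the proposition is a presentation-matching statement, and the "hard part," such as it is, is only making sure that the product rule \eq{xyrel} is correct in every one of its three cases (the tail/prefix bookkeeping with the reversed index conventions is where a sign- or order-type slip could creep in), and that one is genuinely allowed to define $\DM^m$ by generators and the single relation family \eq{dmall} (which the paper has already asserted in \Sref{s:deep}). Given those, $\Psi$ exists, is surjective, and is homogeneous, which is exactly the claim. If one wanted, one could also remark that $\ker\Psi$ is related to the kernel of the Frankenstein action $\Phi$, since the analogous operators for $L_F(1,m)$ act on $\alg{V}_m$ in the same way, but this is not needed for the stated proposition.
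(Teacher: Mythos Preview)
Your proposal is correct and follows exactly the paper's own approach: verify that the defining relations \eq{dmall} of $\DM^m$ hold in $L_F(1,m)$ via \eq{xyrel}, and observe surjectivity because the $x_uy_v$ span. The paper's proof is the two-line version of what you wrote; your additional explicit check of the grading and of surjectivity on generators is just spelling out what the paper leaves implicit.
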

\begin{proof}
	The defining relations \eq{dmall} hold in $L_F(1,m)$ by \eq{xyrel}, and the map is clearly onto.
\end{proof}

Recall the space $\alg{V}_\mm$ and the operators $\epsilon_{uv}$ from \eq{dmact}. Verifying the defining relations, we observe that
there
is a map $L_F(1,m) \ra \End(\alg{V}_{\mm})$ by $x_i \mapsto \epsilon_{i\emptyset}$ and $y_i \mapsto \epsilon_{\emptyset i}$, factoring
the map $\Phi \co \DM^m \ra \End(\alg{V}_{\mm})$.

\begin{rem}
	The map takes $x_{u} \mapsto \epsilon_{u\emptyset}$ and $y_u \mapsto \epsilon_{\emptyset u}$; and therefore $x_{u}y_v \mapsto \epsilon_{u\emptyset}\epsilon_{\emptyset v} = \epsilon_{uv}$.
\end{rem}

We can thus extend \Fref{Franky}:
\begin{figure}[!h]
	$$\xymatrix@R=12pt@C=10pt{
		{} & \DM_0^m \ar@{^(->}[rr] \ar@{->}[dd]|{\hole} \ar@{->}[dl] 
		& {} & \DM^m \ar@{->}[dd]^{\Phi} \ar@{->}[dl]  & {}
		\\ L_F(1,m)_0 \ar@{->}[dr] \ar@{^(->}[rr] & {} & L_F(1,m)  \ar@{->}[dr] & {} & {}
		\\ {} & \M[\mm](F) \ar@{^(->}[rr] & & \End(\alg{V}_{\mm}) &
	}
	$$
	\caption{Maps between supernatural matrices, Leavitt algebra and deep matrices}\label{deepF+}
\end{figure}

The map of \Pref{DMtoLP} sends balanced deep matrices to the zero homogeneous component $L(1,m)_0$, spanned by elements $x_{\bar{u}}y_v$ where $\abs{u}=\abs{v}$. Again, the map $\DM_0 \ra \M[\nn](F)$ factors through $L(1,m)_0$.

\begin{thm}\label{same}
	For $\mm = m^{\infty}$ we have that $L(1,m)_0 \isom \M[\mm](F)$.
\end{thm}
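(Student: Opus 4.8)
The plan is to show that the projection map $\DM_0^m \ra L(1,m)_0$ from \Pref{DMtoLP} (restricted to the balanced component) and the homomorphic image map $\DM_0^m \ra \M[\mm](F)$ from \Pref{dmonto} have the same kernel; then both quotients are isomorphic. Since both maps factor the embedding $\M[\mm](F)\hra\End(\alg{V}_\mm)$ as recorded in \Fref{deepF+} (the front triangle of that diagram commutes), we already know that the composite $\DM_0^m \ra L(1,m)_0 \ra \End(\alg{V}_\mm)$ (via $x_{\bar u}y_v\mapsto\epsilon_{uv}$) coincides with $\DM_0^m\ra\M[\mm](F)\hra\End(\alg{V}_\mm)$. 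So it suffices to prove that the map $L(1,m)_0 \ra \End(\alg{V}_\mm)$, $x_{\bar u}y_v\mapsto \epsilon_{uv}$, is \emph{injective}, with image exactly $\M[\mm](F)$; then $L(1,m)_0\isom\M[\mm](F)$ as desired.

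First I would set up the internal structure of $L(1,m)_0$: it is spanned by monomials $x_{\bar u}y_v$ with $|u|=|v|=t$, and by \eq{xyrel} these multiply exactly by the matrix-unit rule \eq{dmall} restricted to balanced indices. In particular, for each fixed $t\geq 0$ the elements $\set{x_{\bar u}y_v : |u|=|v|=t}$ span a subalgebra isomorphic to $\M[m^t](F)$ (the relation $\sum_i x_iy_i=1$ gives $\sum_{|w|=t}x_{\bar w}y_w=1$, and the other matrix-unit relations follow from $y_ix_j=\delta_{ij}$), and the relation $x_{\bar u}y_v=\sum_{i}x_{\overline{iu}}y_{iv}$ realizes the diagonal embedding $\M[m^t](F)\hra\M[m^{t+1}](F)$. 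Hence $L(1,m)_0 = \bigcup_t \M[m^t](F) = \dlim\M[m^t](F)$ as an abstract algebra --- but a priori this is only a surjection $\M[\mm](F)\twoheadrightarrow L(1,m)_0$, since one must rule out extra collapsing in the Leavitt algebra. That is exactly what faithfulness of the action on $\alg{V}_\mm$ provides.

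So the key step is: the action of $L(1,m)_0$ on $\alg{V}_\mm$ by the $\epsilon_{uv}$ is faithful. For this I would argue directly on the span: a general element of $L(1,m)_0$ is a finite sum $\xi=\sum c_{u,v}\,x_{\bar u}y_v$, and after using \eq{xyrel} to push everything to a common length $t$ (via $x_{\bar u}y_v=\sum_{|z|=t-|u|}x_{\overline{zu}}y_{zv}$), we may write $\xi = \sum_{|u|=|v|=t} c'_{u,v}\,x_{\bar u}y_v$. Apply this to the basis vector $\pi_v := {\cdots 000\,v}\in\alg V_\mm$ for a word $v$ of length $t$: by \eq{dmact}, $\epsilon_{u',v'}\pi_v = \delta_{v',v}\,\pi_{u'}$ up to the periodicity identification, so $\xi\cdot\pi_v = \sum_{|u|=t} c'_{u,v}\,\pi_u$, and since distinct words of length $t$ give linearly independent vectors $\pi_u$, this vanishes for all $v$ only if all $c'_{u,v}=0$, i.e. $\xi=0$. (One should double-check that the passage $F^\mm\hra\alg V_\mm$, $1u\mapsto\pi_u$, is the one used in \Sref{s:deep}, and that $n_t$-periodicity of elements of $F^\mm$ doesn't create coincidences $\pi_u=\pi_{u'}$ for distinct length-$t$ words --- it does not, since two length-$t$ words give the same left-infinite sequence $\cdots000u$ iff they are equal.) This gives injectivity; surjectivity onto $\M[\mm](F)\sub\End(\alg V_\mm)$ is then immediate because $\M[\mm](F)$ is by construction the span of the $\epsilon_{uv}$ with $|u|=|v|$, and so is the image of $L(1,m)_0$.

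The main obstacle I anticipate is purely bookkeeping: making the identification of $L(1,m)_0$'s spanning monomials with the balanced deep matrix units precise enough that \eq{dmall} and the diagonal-embedding relation \eq{dmrel3} hold on the nose (including the order-inversion conventions for $x_v,y_v$), and carefully checking that reducing an arbitrary element of $L(1,m)_0$ to a single common length $t$ is legitimate. Once that normal form is in hand, faithfulness on $\alg V_\mm$ is a short linear-independence argument, and the theorem follows by combining injectivity with the already-known surjection $\M[\mm](F)\twoheadrightarrow L(1,m)_0$, or equivalently by the commuting triangle in \Fref{deepF+} together with injectivity of $\M[\mm](F)\hra\End(\alg V_\mm)$.
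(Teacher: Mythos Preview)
Your proof is correct, but you take a more hands-on route than the paper. The paper's argument is two lines: since $L_F(1,m)$ is simple (a standard fact about Leavitt algebras), the representation $L_F(1,m)\to\End(\alg V_\mm)$ is injective, hence so is its restriction $L(1,m)_0\to\M[\mm](F)$ from \Fref{deepF+}; surjectivity of the latter is \Pref{dmonto}. You instead establish faithfulness of the $L(1,m)_0$-action on $\alg V_\mm$ by an explicit computation, normalizing every element to a common length $t$ and testing against the basis vectors $\pi_v$, thereby avoiding any appeal to the simplicity of the full Leavitt algebra. Your approach is more elementary and self-contained, at the cost of length. One remark: your internal-structure paragraph already essentially finishes the job on its own, since each $\M[m^t](F)$ is simple and maps unitally (hence injectively) into $L(1,m)_0$, so the direct-limit map $\M[\mm](F)\to L(1,m)_0$ is automatically injective as well as surjective; the faithfulness computation on $\alg V_\mm$ is then a second, independent confirmation rather than a necessary step.
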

\begin{proof}
	Since $L_F(1,m)$ is simple, the map $L_F(1,m) \ra \End(\alg{V}_{\mm})$ is injective, and so $L(1,m)_0 \ra \M[\mm](F)$ is also injective; but the latter map is onto by \Pref{dmonto}.
\end{proof}

This identification embeds $L(1,m)_0$ into $\rcfin$, as described in \Cref{recurr}. Let us generalize this to a description of the whole algebra $L(1,m)$.
For simplicity of the notation, let $\alg{V} = \alg{V}_\mm$. There is a natural isomorphism $\mu \co \alg{V} \ra \alg{V} \oplus \cdots \oplus \alg{V}$ ($m$ copies), defined by sending an infinite $\mm$-adic sequence $\pi = \pi' i$ to the vector whose $i$th entry is $\pi'$ (with zero everywhere else), which we write as $\mu(\pi i) = \pi e_i$.
Since $x_i \mapsto \epsilon_{i \emptyset}$ and $\epsilon_{i \emptyset}$ sends $\pi$ to $\pi i$, we may view $x_i \in \End(\alg{V}) \isom \Hom(\alg{V},\alg{V} \oplus \cdots \oplus \alg{V})$ as the column $i$th unit vector, identifying $\alg{V}$ with the $i$th summand. Likewise since
$y_i \mapsto \epsilon_{\emptyset i}$ and $\epsilon_{\emptyset i}$ sends $\pi i$ to $\pi$ (and acts as zero on sequences not ending with $i$), we may view $y_i \in \End(\alg{V}) \isom \Hom(\alg{V} \oplus \cdots \oplus \alg{V}, \alg{V})$ as the row $i$th unit vector.

\forget
This can be exploited in two ways: on one hand, $\End(\alg{V}) \isom \Hom(\alg{V}, \alg{V} \oplus \cdots \oplus \alg{V})$ is a ``column'' direct sum of $m$ copies of $\End(\alg{V})$, namely $\varphi \mapsto \mu \circ \varphi$; and on the other hand $\End(\alg{V}) \isom \Hom(\alg{V} \oplus \cdots \oplus \alg{V}, \alg{V})$ is a ``row'' direct sum of $m$ copies of $\End(\alg{V})$, each acting on a single component, namely $\varphi \mapsto \varphi \circ \mu^{-1}$. Since $x_i \mapsto \epsilon_{i\emptyset}$ and $\epsilon_{i \emptyset}$ sends $\pi$ to $\pi i$, it acts as the identification of $\alg{V}$ with the $i$th component, and consequently $x_i$ can be viewed as the column $i$th unit vector. Similarly, since $y_i \mapsto \epsilon_{\emptyset i}$ and $\epsilon_{\emptyset i}$ sets $\pi i$ to $\pi$ (and acts as zero on sequences not ending with $i$), $y_i$ is the map $(\pi_0,\dots,\pi_{m-1}) \mapsto \pi_i$, and can be viewed as the row $i$th unit vector.
\forgotten

We say that a matrix $x \in \rcfin$ is {\bf{rectangular $\mm$-recurrent}} if there are natural divisors $m^i, m^j \divides \mm$ and a rectangular matrix $a \in \M[m^i \times m^j](F)$ such that $x = a \oplus a \oplus \cdots$.
Although rectangular matrices cannot be multiplied, their recurrent sums are row- and column- finite matrices, which can be multiplied. For example, whereas $x_i$ is the recurrent $m \times 1$ unit vector, it can also be viewed as a recurrent $m^2 \times m$ matrix, and then $x_{i_0}x_{i_1}$ is the unit vector $e_{i_1i_0}$ of size $m^2$. More generally $x_{i_0}\cdots x_{i_{t-1}} y_{j_{s-1}} \cdots y_{j_{0}}$ is now mapped to the matrix unit $e_{i_{t-1}\cdots i_0, j_{s-1} \cdots j_0} \in \M[m^t \times m^s](F)$.

We thus proved:
\begin{thm}\label{recrec}
	$L_F(1,m)$ is isomorphic to the subalgebra of $\rcfin$ composed of rectangular $m^{\infty}$-recurrent matrices.
\end{thm}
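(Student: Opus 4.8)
The plan is to make precise the identification of the generators $x_i, y_j$ already sketched before the statement, check that under this identification they land in the subalgebra of $\rcfin$ of rectangular $m^\infty$-recurrent matrices and satisfy the Leavitt relations, and then invoke simplicity of $L_F(1,m)$ to conclude injectivity, with a separate surjectivity argument onto the rectangular-recurrent subalgebra.

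First I would fix, once and for all, the isomorphism $\mu\co \alg V \ra \alg V\oplus\cdots\oplus\alg V$ ($m$ copies) of the preceding paragraph, and iterate it to get natural isomorphisms $\mu_t\co \alg V \ra \alg V^{m^t}$ for every $t\geq 0$, sending an $\mm$-adic sequence $\pi = \pi' w$ with $\abs{w}=t$ to $\pi'$ placed in the $w$th coordinate. Under $\mu_t$ on the source and $\mu_s$ on the target, an operator in $\End(\alg V)$ becomes an $m^s\times m^t$ block matrix with entries in $\End(\alg V)$, and this is exactly the data of a row- and column-finite matrix once one further expands $\End(\alg V)$ recursively; the point to record is that $\epsilon_{u,v}$ with $\abs u = t$, $\abs v = s$ becomes, under $\mu_t,\mu_s$, the matrix unit $e_{u,v}\in\M[m^t\times m^s](F)$ tensored with the identity of $\alg V$ on the deeper coordinates, i.e. the recurrent sum $e_{u,v}\oplus e_{u,v}\oplus\cdots$. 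Then I would verify the compatibility ``$x_i$ viewed as $m\times 1$ equals $x_i$ viewed as $m^2\times m$'' — this is the content of \eq{dmrel3} transported through $\mu$, and it is what makes the assignment consistent across the various $\mu_t$. The upshot: composing $L_F(1,m)\ra\End(\alg V_\mm)$ with the $\mu_t$'s sends $x_{i_0}\cdots x_{i_{t-1}}\,y_{j_{s-1}}\cdots y_{j_0}$ to the matrix unit $e_{i_{t-1}\cdots i_0,\,j_{s-1}\cdots j_0}\in\M[m^t\times m^s](F)$, realized as a rectangular $\mm$-recurrent matrix in $\rcfin$, and the Leavitt multiplication \eq{xyrel} matches the multiplication of these recurrent rectangular matrices, which is the block form of \eq{dmall}.

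For injectivity: $L_F(1,m)$ is simple (standard for Leavitt algebras of the $m$-petal over a field), and the Frankenstein action $L_F(1,m)\ra\End(\alg V_\mm)$ is nonzero, hence injective; our map is just that action written in the $\mu_t$-coordinates, so it is injective onto its image in $\rcfin$. For surjectivity onto the rectangular $\mm$-recurrent subalgebra: every such matrix is by definition $a\oplus a\oplus\cdots$ for some $a\in\M[m^i\times m^j](F)$, and $a$ is an $F$-linear combination of matrix units $e_{u,v}$ with $\abs u = i$, $\abs v = j$; each such recurrent sum is the image of $x_u y_v$ by the computation just made, so the image contains all rectangular $\mm$-recurrent matrices. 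Conversely the image is contained in that subalgebra because it is spanned by the images of the spanning set $\{x_u y_v\}$ of $L_F(1,m)$, each of which is rectangular $\mm$-recurrent, and the rectangular $\mm$-recurrent matrices form a subalgebra (closed under the $\rcfin$ product and under $F$-linear combinations, after passing to a common refinement of the periods). This gives the isomorphism.

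The main obstacle I anticipate is purely bookkeeping: pinning down the iterated identification $\mu_t$ and checking that it is \emph{the same} identification for all $t$ — equivalently, that the recurrent-matrix description of $\epsilon_{u,v}$ is independent of how deep one chooses to expand — so that one genuinely gets a well-defined algebra homomorphism into $\rcfin$ rather than a family of incompatible block presentations. Once that coherence is in place (it follows from \eq{dmrel3}, already established), everything else is a direct translation of facts proved earlier: \eq{dmall}/\eq{xyrel} for the multiplication, \Tref{same} and \Cref{recurr} for the degree-zero part, and simplicity of $L_F(1,m)$ for injectivity.
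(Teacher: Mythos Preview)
Your proposal is correct and follows essentially the same route as the paper: the paper's argument is the discussion immediately preceding the theorem (culminating in ``We thus proved''), which identifies $x_i,y_j$ via $\mu$ as recurrent unit column/row vectors, computes that $x_{i_0}\cdots x_{i_{t-1}}y_{j_{s-1}}\cdots y_{j_0}$ lands on the recurrent matrix unit $e_{i_{t-1}\cdots i_0,\,j_{s-1}\cdots j_0}$, and implicitly relies on simplicity of $L_F(1,m)$ (already used in \Tref{same}) for injectivity. You have simply made explicit the coherence of the iterated $\mu_t$, the surjectivity onto the rectangular-recurrent span, and the closure of that span under multiplication---details the paper leaves to the reader.
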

This is an isomorphism of graded algebras, and the homogeneous component of degree zero is \Cref{recurr}.

\Tref{recrec} leads to a generalization of the Leavitt algebra, similar to the one given above for deep matrices: $x \in \rcfin$ is {\bf{rectangular $\nn$-recurrent}} if there are natural divisors $n, n' \divides \nn$ and a rectangular matrix $a \in \M[n \times n'](F)$ such that $x = a \oplus a \oplus \cdots$. The algebra of rectangular $\nn$-recurrent matrices should rightfully be denoted $L_F(1,\nn)$.

\subsection{The conditions $A^m \isom A$ and $\M[m](A) \isom A$}\label{ssec:LPA}

Write $\mm = m^{\infty}$. If an algebra~$A$ satisfies $A^m \isom A$ (as a module) then $\M[m](A) \isom A$ (as unital algebras). The simple Leavitt algebra $L_F(1,m)$ is a minimal solution to $A^m \isom A$, while its zero degree component $\M[\mm](F) = L_F(1,m)_0$ is a minimal solution to $\M[m](A) \isom A$. Any algebra containing $\M[\mm](F)$ but not $L_F(1,m)$ is a nonmodular solution to $\M[m](A) \isom A$.

Leavitt algebras demonstrate the difference between the two equations in the title of this subsection in another manner. By \cite[Theorem~3.10]{Lpa}, $L_F(1,m)$ solves $\M[d](A)\isom A$ if and only if $\gcd(d,m-1) = 1$. Thus, if $\gcd(d,m-1) = 1$, we have that $\M[d^{\infty}](F) \hra L_F(1,m)$.
\begin{cor}
	The algebra $L_F(1,m)$ contains the supernatural matrix algebra $\M[p^{\infty}](F)$ for almost every prime $p$; while its zero component $\M[\mm](F)$ contains almost no finite matrix algebra $\M[p](F)$.
\end{cor}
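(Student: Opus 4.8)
The plan is to derive both halves of the corollary from facts already established in the excerpt, applied to the numerics of $\gcd$. For the first statement, I would invoke the result quoted from \cite[Theorem~3.10]{Lpa}: $L_F(1,m)$ solves $\M[d](A)\isom A$ precisely when $\gcd(d,m-1)=1$, together with the implication $\XIII \implies \XIV$ (which is the chain $\M[d](F)\tensor A \hra A \implies \M[d^\infty](F)\hra A$, proved just after \Fref{ALL}). Thus for every prime $p$ with $p \nmid (m-1)$ we get $\M[p^\infty](F)\hra L_F(1,m)$. Since $m-1$ has only finitely many prime divisors, this holds for all but finitely many primes, which is the meaning of ``almost every prime''. (If $m=1$ the statement is vacuous or trivial; if $m=2$ then $m-1=1$ and it holds for \emph{every} prime — I would note this as the extremal case.)

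For the second statement, I would argue the contrapositive flavour: if $\M[p](F)\hra \M[\mm](F)$ then $p \divides \mm = m^\infty$, i.e.\ $p \divides m$. Indeed, $\M[\mm](F)$ has degree $\mm=m^\infty$ by the degree computation in \Sref{sec:CC}, and a finite matrix subalgebra $\M[p](F)$ forces $p$ to divide the degree (this is the ``$n^\infty \divides \nn$'' circle of ideas, or simply that $\deg$ of a central simple subalgebra divides the degree of the ambient algebra in $\CC{}$; one can also see it concretely via \Cref{recurr}, since a $p$-recurrent matrix pattern must be compatible with the $m$-adic block structure). Hence the only primes $p$ for which $\M[p](F)\hra\M[\mm](F)$ are the (finitely many) prime divisors of $m$, so $\M[\mm](F)$ contains $\M[p](F)$ for only finitely many — ``almost no'' — primes $p$. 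Combining the two paragraphs gives the stated contrast: the big algebra $L_F(1,m)$ swallows $\M[p^\infty](F)$ for almost all $p$, while its degree-zero part $\M[\mm](F)=L_F(1,m)_0$ swallows $\M[p](F)$ for almost no $p$.

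I do not expect a serious obstacle here; the corollary is essentially a bookkeeping consequence of \Tref{same}, \Cref{recurr}, the implication diagram in \Fref{ALL}, and the cited Leavitt-algebra fact. The one point deserving care is making precise why $\M[p](F)\hra\M[\mm](F)$ implies $p\divides m$: the cleanest route is to use that any \fdcsa[sub] of an algebra in $\CC{}$ has degree dividing the degree of the whole (so $p \divides \deg(\M[\mm](F)) = m^\infty$, forcing $p\divides m$), and I would phrase it that way rather than chasing recurrent matrix patterns by hand. It is also worth flagging the mild abuse in ``almost every'': it means ``all but finitely many'', which is exactly ``all primes not dividing $m-1$'' in the first case and ``all primes not dividing $m$'' (complemented) in the second.
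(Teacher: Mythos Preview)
Your proposal is correct and matches the paper's approach exactly: the paper's entire proof is the parenthetical remark ``The claim holds for every~$p$ prime to $m(m-1)$'', and you have simply unpacked the two halves of that remark using the cited Leavitt fact plus $\XII \Rightarrow \XIV$ for the first, and the degree-divisibility observation for the second. The only cosmetic point is that since $\M[p](A)\isom A$ is condition \XII\ rather than \XIII, citing $\XII \Rightarrow \XIV$ is slightly more direct than $\XIII \Rightarrow \XIV$, though of course either works.
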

(The claim holds for every~$p$ prime to $m(m-1)$).

\subsection{Elementary Grading}

Famously, the equation
\begin{equation}\label{niceisom}
	\M[m](F) \tensor L \isom L,
\end{equation}
holds for
the Leavitt algebra $L = L_F(1,m)$ by defining $e_{ij} \tensor \ell \mapsto x_i \ell y_j$. This is a graded isomorphism when~$L$ is given the standard grading as above, and $\M[m](F)$ is graded trivially. The zero homogeneous component is $\M[m](F) \tensor L_F(1,m)_0 \isom L_F(1,m)_0$, as expected in light of \Tref{same}.

But there are other gradings of $L$. Let $G$ be any group, and $g_0,\dots,g_{m-1} \in G$ be arbitrary elements (we preserve the additive notation, although $G$ could well be nonabelian). Setting
$$\deg(x_i) = g_i, \qquad \deg(y_i) = -g_i$$
respects the defining relations, and induces a $G$-grading of $L$. In order for~\eq{niceisom} to preserve grading, we must grade $\M[m](F)$ so that $\deg(e_{ij}) = \deg(x_i y_j)$, namely $\deg(e_{ij}) = g_i- g_j$. This is called an {\bf{elementary}} grading of the matrix algebra (see~\cite{BZ}). The gradings of both algebras induce the decompositions into homogeneous components $L = \bigoplus_g L_g$ and $\M[m](F) = \bigoplus_g \M[m](F)_g$, and the homogeneous component at the identity provides us with an isomorphism
$$\bigoplus_g (\M[m](F)_g \tensor L_{-g}) =  (\M[m](F) \tensor L)_0 \isom L_0$$

\begin{exmpl}
	Fix $m = 2$, and consider the $\Z$-grading on $L = L_F(1,2)$ by $\deg(x_0) = - \deg(y_0) = 1$ and $\deg(x_1) = -\deg(y_1) = 2$. With this grading, the components of $L = \bigoplus L_k$ satisfy $L_k \isom \smat{L_k}{L_{k+1}}{L_{k-1}}{L_k}$ for every $k \in \Z$.
\end{exmpl}

Thus one is led to consider
elementary grading of the supernatural matrix algebra $\M[\nn](F)$, where $\nn$ is an arbitrary supernatural number. As before, let $1=n_0 \divides n_1 \divides n_2 \divides \cdots$ be divisors converging to $\nn$.

Let $G$ be a group. An elementary $G$-grading of $\M[\nn](F) = \dlim \M[n_t](F)$ is a $G$-grading which induces an elementary grading on each of the subalgebras $\M[n](F) \sub \M[\nn](F)$.
\begin{prop}
	Every elementary grading of $\M[\nn](F)$ is defined by
	$$\deg(e_{i_{t-1}\cdots i_0, i'_{t-1}\cdots i'_0}) = h_{0,i_0} + h_{1,i} + \cdots + h_{t-1,i_{t-1}} - h_{t-1,i_{t-1}} - \cdots - h_{0,i_0}$$
	where $h_{t,j} \in G$ are arbitrary elements ($t \geq 0$, $0 \leq j < \frac{n_{t+1}}{n_t}$).
\end{prop}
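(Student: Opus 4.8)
The plan is to establish both directions of the asserted classification: first that every choice of elements $h_{t,j}\in G$ (with $t\geq 0$ and $0\leq j<n_{t+1}/n_t$) produces, via the displayed formula, an elementary grading of $\M[\nn](F)$, and conversely that every elementary grading has this shape. The single ingredient I need is the standard description of elementary gradings of a finite matrix algebra: an elementary $G$-grading of $\M[n](F)$ amounts to a function $w\mapsto g_w\in G$ on its (indexed) matrix units, via $\deg(e_{w,w'})=g_w-g_{w'}$, and two such functions define the same grading precisely when they differ by a common right translation $g_w\mapsto g_w+c$. I will record this first; it is immediate, since $g_w-g_{w'}=g'_w-g'_{w'}$ for all $w,w'$ forces $g'_w=g_w+(-g_{w_0}+g'_{w_0})$ after fixing a base index $w_0$ (here $G$ is written additively, possibly non-commutatively, as in the paper).

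For the forward direction, given the $h_{t,j}$ put $\phi(\emptyset)=0$ and $\phi(k_{t-1}\cdots k_0)=h_{0,k_0}+h_{1,k_1}+\cdots+h_{t-1,k_{t-1}}$, so that prepending a digit satisfies $\phi(iw)=\phi(w)+h_{\abs{w},i}$. For each $t$ the tuple $(\phi(w))_{\abs{w}=t}$ defines an elementary grading of $\M[n_t](F)$, and these are compatible with the inclusions $\M[n_t](F)\hra\M[n_{t+1}](F)$: in the refinement relation $e_{ww'}=\sum_i e_{iw,iw'}$ every summand gets the degree $\phi(iw)-\phi(iw')=(\phi(w)+h_{t,i})-(\phi(w')+h_{t,i})=\phi(w)-\phi(w')$, which is exactly $\deg(e_{ww'})$. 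Hence the gradings assemble on $\M[\nn](F)=\dlim\M[n_t](F)$ into a $G$-grading that restricts to an elementary grading on every $\M[n_t](F)$, i.e. an elementary grading, and by construction it is given by the stated formula.

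For the converse, take an elementary grading of $\M[\nn](F)$; by definition it restricts to an elementary grading of each $\M[n_t](F)$, hence there are tuples $(g^{(t)}_w)_{\abs{w}=t}$ with $\deg(e_{w,w'})=g^{(t)}_w-g^{(t)}_{w'}$. Since $e_{ww'}=\sum_i e_{iw,iw'}$ is a homogeneous element written as a sum of homogeneous (and linearly independent) matrix units of $\M[n_{t+1}](F)$, every $e_{iw,iw'}$ has degree $\deg(e_{ww'})$. Now choose the tuples coherently by induction: normalize $g^{(0)}_\emptyset=0$; given $g^{(t)}$, pick any tuple $g^{(t+1)}$ representing the grading of $\M[n_{t+1}](F)$, and observe that for each digit $i$ the sub-tuple $(g^{(t+1)}_{iw})_{\abs{w}=t}$ defines the same elementary grading of $\M[n_t](F)$ as $(g^{(t)}_w)_{\abs{w}=t}$ (both realize $(w,w')\mapsto\deg(e_{iw,iw'})=\deg(e_{ww'})$), so by the uniqueness remarked above there is a unique $h_{t,i}\in G$ with $g^{(t+1)}_{iw}=g^{(t)}_w+h_{t,i}$ for all $\abs{w}=t$. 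Unwinding this recursion yields $g^{(t)}_{k_{t-1}\cdots k_0}=h_{0,k_0}+h_{1,k_1}+\cdots+h_{t-1,k_{t-1}}$, and therefore $\deg(e_{w,w'})=g^{(t)}_w-g^{(t)}_{w'}$ is precisely the claimed expression.

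The only genuine difficulty is the non-uniqueness of the tuple presenting an elementary grading of a matrix algebra (it is defined only up to right translation), which prevents one from simply splicing together the level-wise data; the coherent inductive choice of representatives is what resolves this, and this same ambiguity is exactly the freedom that lets the $h_{t,j}$ be arbitrary. A small point to keep track of is that since $G$ need not be abelian the ``shift digit'' must be prepended, so that $h_{\abs{w},i}$ sits at the far right of $\phi(iw)$ and cancels against $\phi(iw')$ — consistent with how the $\nn$-adic indices and the relation $e_{ww'}=\sum_i e_{iw,iw'}$ are arranged.
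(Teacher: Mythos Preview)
Your proof is correct and follows essentially the same approach as the paper's: both use the compatibility relation $e_{ww'}=\sum_i e_{iw,iw'}$ to force $g^{(t+1)}_{iw}-g^{(t+1)}_{iw'}=g^{(t)}_w-g^{(t)}_{w'}$, then extract $h_{t,i}$ as the shift relating the level-$t$ and level-$(t{+}1)$ tuples and unwind recursively. You are somewhat more thorough than the paper in also verifying the forward direction explicitly and in handling the normalization (the paper simply sets $h_{t,i}=-g_{t,0}+g_{t+1,i0}$ by taking $w'=0$, tacitly assuming $g_{0,\emptyset}=0$), but the substance is the same.
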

\begin{proof}
	By assumption, for every $t \geq 0$ there are $n_t$ elements $g_{t,u} \in G$ such that $\deg(e_{uv}) = g_{t,u} -g_{t,v}$, where $u,v$ are $\nn$-adic numbers of length~$t$. By the identification relation \eq{dmrel3}, we must have that $g_{t+1,iw}  -g_{t+1,iw'} = \deg(e_{iw,iw'}) = \deg(e_{w,w'}) = g_{t,w} - g_{t,w'}$ for every $i$ and every $w,w'$ of length $t$. Taking $w' = 0$, we have that
	$g_{t+1,iw} = g_{t,w} + h_{t,i}$ where $h_{t,i} = - g_{t,0} + g_{t+1,i0}$. By induction, it follows that $g_{t,i_{t-1}\cdots i_0} = h_{0,i_0} +  h_{1,i}+ \cdots+ h_{t-1,i_{t-1}}$, and the grading follows.
\end{proof}
\section{Simple presentations}
We are after the simple presentations of the supernatural matrix algebra $A = \M[\nn](F)$.
A first and most basic example of a simple module has been given in Section 1. One notices that this example is in fact a \textit{locally simple} as will be defined later. The most important result of this section is that in contrary to (finite) matrix algebras, supernatural matrix algebras admits uncountably many nonisomorphic pairwise simple modules.

Let $\alg{A}$ be any algebra in $\CC{\omega}$.
\begin{rem}\label{ldsr}
	Write $\alg{A} = \dlim (A_n,\varphi_n)$. For each $n$ let $V_n$ be a module of $A_n$. Let $f_n \co V_n \ra V_{n+1}$ be functions satisfying $f_n(ax) = \varphi_n(a)f_n(x)$ for all $a \in A_n$ and $x \in V_n$. Let $\alg{V} = \dlim(V_n,f_n)$. Then $\alg{V}$ is a module over $\alg{A}$.
\end{rem}
Indeed, the action of the $A_n$ on $V_n$ extends to an action of $\alg{A}$ on $\alg{V}$ unambiguously.

\begin{defn}
	A module $\alg{V}$ over $\alg{A}$ is {\bf{locally finite}} if there is a presentation $\alg{A} = \dlim A_n$ and there are finite dimensional modules $V_n$ over $A_n$, and compatible maps $f_n \co V_n \ra V_{n+1}$, such that $\alg{V} = \dlim (V_n,f_n)$. Similarly, for every property $P$, we say that $\alg{V}$ is locally-$P$ if $\alg{V}$ is locally finite and for every $n$, $V_n$ has the property $P$.
\end{defn}

\begin{prop}
	A locally simple module is simple.
\end{prop}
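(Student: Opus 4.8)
The plan is to show that any nonzero submodule $\alg{W} \sub \alg{V}$ of a locally simple module $\alg{V} = \dlim(V_n, f_n)$ must be all of $\alg{V}$, by exploiting the fact that elements of $\alg{V}$ live at finite stages and that each $V_n$ is simple over $A_n$. First I would fix a presentation $\alg{A} = \dlim(A_n,\varphi_n)$ together with the simple modules $V_n$ over $A_n$ and the compatible maps $f_n$, and write $g_n \co V_n \ra \alg{V}$ for the canonical maps into the limit, so that $\alg{V} = \bigcup_n g_n(V_n)$ and $g_{n+1} f_n = g_n$. The key point to verify at the outset is that each $f_n$ is injective: since $V_n$ is a simple (hence nonzero) module over $A_n$ and $f_n$ is $A_n$-linear (via $\varphi_n$) with $f_n(x) = 0$ forcing $\ker f_n$ to be a proper submodule — here I should check $f_n \ne 0$, which holds because $f_n(1\cdot x) = \varphi_n(1) f_n(x) = f_n(x)$ and unitality is preserved, or more directly because the $V_n$ assemble into the nonzero limit $\alg{V}$ — so $\ker f_n = 0$ by simplicity. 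Consequently each $g_n$ is injective as well, and we may regard $V_0 \sub V_1 \sub \cdots \sub \alg{V}$ as an increasing chain of subspaces.

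Next I would take a nonzero submodule $\alg{W} \sub \alg{V}$ and a nonzero element $w \in \alg{W}$. By construction of the direct limit there is some $n$ and some $x \in V_n$ with $g_n(x) = w$, and $x \ne 0$ since $g_n$ is injective. I claim $g_n(V_n) \sub \alg{W}$: indeed $V_n$ is simple over $A_n$, so $A_n x = V_n$, and therefore $g_n(V_n) = g_n(A_n x) = \varphi_n'(A_n)\, g_n(x) \sub \alg{A}\cdot w \sub \alg{W}$, where $\varphi_n' \co A_n \ra \alg{A}$ is the canonical map (using that $g_n$ intertwines the $A_n$-action with the $\alg{A}$-action via $\varphi_n'$, which is exactly the content of \Rref{ldsr}). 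Now for any $m \ge n$ the compatible map $V_n \ra V_m$ embeds $V_n$ into $V_m$, and since $V_m$ is simple over $A_m$ the nonzero submodule generated by the image of $V_n$ is all of $V_m$; hence $g_m(V_m) = \alg{A}\cdot g_m(\text{image of } x) = \alg{A} \cdot w \sub \alg{W}$ by the same argument. Since $\alg{V} = \bigcup_{m \ge n} g_m(V_m)$, we conclude $\alg{W} = \alg{V}$, so $\alg{V}$ is simple.

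The only genuinely delicate point — and the step I would be most careful about — is the injectivity of the transition maps $f_n$, equivalently the claim that $g_n(V_n) \ne 0$. This is where one uses that the $V_n$ are honest simple (nonzero) modules and that the maps $f_n$ are module homomorphisms with respect to unital actions: a unital module homomorphism out of a simple module is either zero or injective, and the zero map would be incompatible with the whole system assembling to a nonzero limit. One should also double-check the bookkeeping that the $\alg{A}$-submodule of $\alg{V}$ generated by $g_n(x)$ really contains $g_m(V_m)$ for all large $m$ and not merely $g_m$ of the image of $V_n$; this follows because that image is a nonzero subset of the simple $A_m$-module $V_m$, so $A_m \cdot (\text{image}) = V_m$, and then applying $g_m$ and using the intertwining property of $g_m$ gives $g_m(V_m) \sub \alg{A} \cdot g_n(x)$. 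With these two points in hand the argument is routine. (Nothing here uses countable generation essentially, but the statement is phrased for $\alg{A} \in \CC{\omega}$, so the index set may be taken to be $\omega$, which makes the "$\bigcup_{m\ge n}$" exhaustion transparent.)
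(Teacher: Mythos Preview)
Your argument is correct and takes the same route as the paper, which simply phrases it as transitivity: given nonzero $v_1$ and any $v_2$ in $\alg{V}$, both come from some common $V_n$, and simplicity of $V_n$ over $A_n$ furnishes $a \in A_n$ with $av_1 = v_2$. The injectivity of the $f_n$ that you single out as the delicate point is neither provable in general (nothing prevents an early $f_n$ from being zero while later ones still assemble a nonzero limit; your justification ``$f_n(1\cdot x)=\varphi_n(1)f_n(x)$'' is a tautology) nor needed: at each stage $m \ge n$ you only use that the image $x_m$ of $x$ in $V_m$ is nonzero, and that follows immediately from $g_m(x_m) = w \ne 0$, so the whole injectivity discussion can be dropped.
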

\begin{proof}
	It is equivalent to prove that the action of $A$ on $V$ is transitive. Let $v_1,v_2\in V$. They come from some $V_n$. Since $V_n$ is simple over $A_n$ there exists some $a\in A_n$ such that $av_1=v_2$.
\end{proof}

\begin{construction}\label{alldsr}
	The locally simple modules for $\M[\nn](F)$ are described as follows:
	\begin{enumerate}
		\item Let $1 = n_0 \divides n_1\divides n_2 \divides \cdots$ be a sequence such that $\nn = \lcm(n_i)$, and assume $\M[\nn](F) = \dlim \M[n_i](F)$ with the standard embeddings.
		\item Set $W_i = F^{n_i/n_{i-1}}$ and $V_i = W_1 \tensor \cdots \tensor W_i$, so that $\dim_F(V_i) = n_i$.
		\item Let $\alpha_i \in W_i$ be nonzero elements.
		\item Embed $V_i \ra V_{i+1}$ by $f_i \co x \mapsto x \tensor \alpha_i$. (These maps are compatible in the sense of \Rref{ldsr}).
	\end{enumerate}
	Then $\alg{V} = \dlim (V_i,f_i)$ is a locally simple, and every locally simple module is obtained in this way.
\end{construction}

This follows from the definition. Notice that if $(n_{k_i})$ is a subsequence of $(n_k)$, then there are more locally simple modules associated with $(n_{k_i})$ than for $(n_k)$, because in passing from $n_{k_i}$ to $n_{k_{i+1}}$ we may choose vectors in $F^{n_{k_{i+1}}/n_{k_i}}$ which are not tensor products from $F^{n_{k_{i+1}}/n_{k_{i+1}-1}} \tensor \cdots \tensor F^{n_{k_i+1}/n_{k_i}}$.

\begin{rem}
	The module $V$ of \Rref{originalV} results from taking each $\alpha_i \in W_i$ to be the first vector in the standard basis.
\end{rem}

\begin{prop}
	Two modules $\alg{V}$ and $\alg{V}'$ defined in \Tref{alldsr} over the same chain of naturals, are isomorphic if and only if for every large enough $i$, $\alpha_i$ and $\alpha_i'$ are linearly dependent.
\end{prop}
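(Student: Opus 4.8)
The plan is to compute the direct limit $\alg{V} = \dlim(V_i, f_i)$ and $\alg{V}' = \dlim(V_i, f_i')$ concretely and compare them as $\alg{A}$-modules, where $\alg{A} = \M[\nn](F)$. First I would set up the bookkeeping: both modules live over the \emph{same} chain $1 = n_0 \divides n_1 \divides \cdots$ and the same spaces $V_i = W_1 \tensor \cdots \tensor W_i$, but with embeddings $f_i(x) = x \tensor \alpha_i$ versus $f_i'(x) = x \tensor \alpha_i'$. The transition maps into the limit are $\iota_i \co V_i \ra \alg{V}$ and $\iota_i' \co V_i \ra \alg{V}'$, and $\alg{V} = \bigcup_i \iota_i(V_i)$, with $\iota_i = \iota_{i+1} \circ f_i$, and similarly for $\alg{V}'$. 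I would record at the outset that, as a module over the subalgebra $\M[n_i](F) \sub \alg{A}$, the image $\iota_i(V_i)$ is a copy of the simple module $F^{n_i}$, hence $\alg{V}$ is a simple $\alg{A}$-module by the preceding Proposition; the same for $\alg{V}'$.

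For the easy direction, suppose $\alpha_i$ and $\alpha_i'$ are linearly dependent for all $i \geq N$; write $\alpha_i' = c_i \alpha_i$ with $c_i \in F^\times$ for $i \geq N$. Then I would define, for each $i \geq N$, the scaled map $g_i \co \iota_i(V_i) \ra \iota_i'(V_i)$ sending $\iota_i(x) \mapsto \iota_i'(x)$; the point is to check compatibility with the transition maps, which amounts to the identity $f_i'(x) = x \tensor \alpha_i' = c_i (x \tensor \alpha_i) = c_i f_i(x)$, so that after correcting by the cumulative scalars $\prod_{N \leq j < i} c_j$ the squares commute. This assembles to an $F$-linear bijection $\alg{V} \ra \alg{V}'$ which intertwines the actions of each $\M[n_i](F)$, $i \geq N$ — and since these subalgebras exhaust $\alg{A}$, it is an $\alg{A}$-module isomorphism. (One has to be a little careful that the scalar corrections can be chosen consistently; working with $\iota_i(V_i)$ for $i \geq N$ and the unique factorization $\iota_i = \iota_j \circ (f_{j-1} \circ \cdots \circ f_i)$ makes this a routine finite computation at each stage.)

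For the converse — which I expect to be the main obstacle — suppose $\psi \co \alg{V} \ra \alg{V}'$ is an $\alg{A}$-module isomorphism; I must show $\alpha_i, \alpha_i'$ are linearly dependent for all large $i$. The idea is that $\psi$ must carry the ``finite layer'' $\iota_i(V_i)$ into some finite layer $\iota_j'(V_j)$. Concretely, $\iota_i(V_i)$ is a simple $\M[n_i](F)$-submodule of $\alg{V}$ of dimension $n_i$; pick a large enough $j$ so that $\psi(\iota_i(V_i)) \sub \iota_j'(V_j)$ (possible since $\psi(\iota_i(V_i))$ is finite dimensional and the $\iota_j'(V_j)$ are nested with union $\alg{V}'$), and take $j \geq i$. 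Restricting to $\M[n_i](F) \sub \M[n_j](F)$, both $\iota_i(V_i)$ and $\psi(\iota_i(V_i))$ are copies of the unique simple $\M[n_i](F)$-module $F^{n_i}$, so $\psi$ matches them up to a scalar that I can normalize; then one checks that $\psi$ carries $\iota_{i+1}(V_{i+1})$ compatibly, and comparing the images of $\iota_i(V_i) \hra \iota_{i+1}(V_{i+1})$ under $f_i$ versus under $f_i'$ forces $\alpha_i$ and $\alpha_i'$ to span the same line inside $W_{i+1}$ — up to the finitely many indices below whatever $N$ the layer-matching forces. The delicate part is the uniform choice of scalars and confirming that the matching of finite layers is eventually \emph{level-preserving} (layer $i$ goes to layer $i$, not layer $j > i$), which follows because $\dim \iota_i(V_i) = n_i$ is strictly increasing and $\psi$ is a bijection, so the sequence of layers $\psi(\iota_i(V_i))$ must be cofinal in $\{\iota_j'(V_j)\}$ with matching dimensions — forcing $j = i$ for all large $i$. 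Once level-preservation is in hand, the identification $\psi \circ f_i = (\text{scalar}) \cdot f_i' \circ \psi$ on $V_i$ reads off exactly the asserted linear dependence of $\alpha_i$ and $\alpha_i'$.
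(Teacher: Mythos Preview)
Your overall strategy is sound and it differs from the paper's more direct argument. The paper bypasses all of the layer-matching bookkeeping by working with annihilators: pick any $0 \neq x \in \alg{V}$, observe that $\ann_{\alg{A}}(x) = \ann_{\alg{A}}(\psi(x))$, choose $N$ large enough that both $x$ and $\psi(x)$ lie in (the images of) $V_N$, and then for every $m > N$ the level-$m$ representatives $x \tensor \alpha_{N+1} \tensor \cdots \tensor \alpha_m$ and $\psi(x) \tensor \alpha_{N+1}' \tensor \cdots \tensor \alpha_m'$ have the same annihilator in $\M[n_m](F)$, hence are proportional in $V_m$; the proportionality of each $\alpha_i$ with $\alpha_i'$ then falls out from the pure-tensor form. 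This is shorter, and the Schur-type input (``same annihilator in $\M[n](F)$ implies proportional in $F^n$'') is essentially the same fact you invoke anyway.

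Your route via eventual level-preservation and Schur's lemma also works, but the justification you give for the key step is not correct as written. The claim that ``$\psi(\iota_i(V_i))$ must be cofinal in $\{\iota_j'(V_j)\}$ with matching dimensions, forcing $j = i$'' does not by itself yield $\psi(\iota_i(V_i)) = \iota_i'(V_i)$: for $j > i$ there are many $n_i$-dimensional simple $\M[n_i](F)$-submodules of $\iota_j'(V_j)$ (namely $V_i \tensor \beta$ for any nonzero $\beta \in W_{i+1} \tensor \cdots \tensor W_j$), and neither cofinality nor dimension picks out $\iota_i'(V_i)$ among them. The correct argument is the one already implicit in your earlier remarks about simplicity: fix $0 \neq x \in \iota_N(V_N)$ with $\psi(x) \in \iota_M'(V_M)$; then for every $i \geq \max(N,M)$, simplicity of $V_i$ over $A_i = \M[n_i](F)$ gives $\iota_i(V_i) = A_i \cdot x$ and $\iota_i'(V_i) = A_i \cdot \psi(x)$, so $\psi(\iota_i(V_i)) = A_i \cdot \psi(x) = \iota_i'(V_i)$. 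With level-preservation established this way, your Schur-lemma comparison of $f_i$ and $f_i'$ goes through exactly as you describe. Note that this fix is really the paper's annihilator argument in disguise, so the two approaches converge once the bookkeeping is stripped away.
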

\begin{proof}
	$\Leftarrow$ This direction is clear.
	
	$\Rightarrow$ Assume there is an isomorphism $\varphi:\alg{V}\to \alg{V'}$ and choose an element $0\ne x\in \alg{V}$. Then $\ann(x)=\ann(\varphi(x))$.
	Choose some $n$ such that $x,\varphi(x)$ comes from $V_n$. With some abuse of notation, we denote their origin by $x,\varphi(x)$ too. Then $\ann_{A_n}(x)=\ann_{A_n}(\varphi(x))$, and for all $m>n$, $\ann_{A_m}(x\tensor \alpha_{n+1} \tensor \cdots \tensor \alpha_{m})=\ann_{A_m}(\varphi(x)\tensor \alpha_{n+1}' \tensor \cdots \tensor \alpha_{m}')$. It is known that two vectors have the same annihilator if and only if they are linearly dependent. So we are done. 
\end{proof}
\begin{cor}
	For every field $F$ there are uncountably many nonisomorphic locally simple modules over $M_{\nn}(F)$.  
\end{cor}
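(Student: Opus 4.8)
The plan is to exhibit an explicit $2^{\aleph_0}$-sized family of locally simple modules indexed by subsets of $\N$, and then read off pairwise non-isomorphism directly from the isomorphism criterion proved just above. Throughout, $\nn$ is taken infinite, so that the chain $1 = n_0 \divides n_1 \divides n_2 \divides \cdots$ with $\lcm(n_i) = \nn$ used in \Tref{alldsr} is genuinely infinite (if $\nn \in \N$ the algebra $\M[\nn](F)$ is simple Artinian with a single simple module, so there is nothing to prove).

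First I would note that since the $n_i$ are strictly increasing and $n_{i-1} \divides n_i$, every ratio $n_i/n_{i-1}$ is at least $2$; hence the spaces $W_i = F^{n_i/n_{i-1}}$ of \Tref{alldsr} all have dimension $\ge 2$, and I may fix, for each $i \ge 1$, two linearly independent vectors $u_i, v_i \in W_i$. For a subset $S \subseteq \set{1,2,3,\dots}$ let $\alg{V}^S$ be the locally simple module produced by \Tref{alldsr} from the data $\alpha_i = u_i$ if $i \in S$ and $\alpha_i = v_i$ if $i \notin S$ (over the single fixed chain $(n_i)$). By the proposition preceding the corollary, $\alg{V}^S \isom \alg{V}^T$ precisely when the chosen $\alpha_i$'s for $S$ and for $T$ are linearly dependent for all large $i$; because $u_i$ and $v_i$ were taken independent, this forces $\alpha_i$ to be the same vector on both sides, i.e. $i \in S \iff i \in T$, for all large $i$. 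Thus $\alg{V}^S \isom \alg{V}^T$ if and only if the symmetric difference $S \triangle T$ is finite.

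To conclude I would invoke the elementary fact that the relation ``$S \triangle T$ finite'' partitions $\mathcal{P}(\N)$ into uncountably many classes: the class of a fixed $S$ equals $\set{S \triangle E \suchthat E \subseteq \N \text{ finite}}$, which is countable, whereas $\mathcal{P}(\N)$ is uncountable, so the classes cannot be countably many. Hence $\set{\alg{V}^S \suchthat S \subseteq \N}$ already contains uncountably many pairwise non-isomorphic locally simple $\M[\nn](F)$-modules. I do not anticipate a genuine obstacle here: the isomorphism criterion does the real work, and what remains is the cardinality bookkeeping above. The only points deserving a word of care are the choice of non-proportional $u_i, v_i$ (so that distinct ``tails'' of the indexing sequences are genuinely inequivalent) and the observation that the defining chain always leaves room for this, i.e. $\dim W_i \ge 2$, which is automatic from strict divisibility of the $n_i$.
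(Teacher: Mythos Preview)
Your argument is correct and is essentially the intended one: the paper states this result as an immediate corollary of the isomorphism criterion and gives no separate proof, and what you have written is exactly the natural way to make that deduction explicit. Your handling of the edge case $\nn \in \N$ and the observation that $\dim W_i \geq 2$ (needed to choose independent $u_i,v_i$) are appropriate points of care that the paper leaves implicit in its standing convention that the chain $n_1 \divides n_2 \divides \cdots$ is strictly increasing.
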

\begin{cor}\label{lsm}
	Let $\alg{V}$ be locally finite module over $M_{\nn}(F)$. Then $\alg{V}$ is locally simple if and only if there is a cofinal subset of the divisors of $\nn$ such that for every $x\in \alg{V}$ and every $n$ in this subset $\ann_{M_n(F)}(x)$ has co-dimension 1.  
\end{cor}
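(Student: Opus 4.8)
The plan is to combine the classification of locally simple modules in Construction~\ref{alldsr} with the elementary remark that a finite-dimensional module $U$ over a matrix algebra $\M[n](F)$ is simple if and only if $\ann_{\M[n](F)}(u)$ is a maximal left ideal for every $0\neq u\in U$; this is the content of ``co-dimension~$1$'', since then $\M[n](F)/\ann_{\M[n](F)}(u)\isom\M[n](F)u$ is a single copy of the simple $\M[n](F)$-module. So local simplicity should be detected by testing this orbit condition along a cofinal family of matrix subalgebras.

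For the forward implication I would write $\alg V=\dlim(V_i,f_i)$ as in Construction~\ref{alldsr}, over a divisibility chain $1=n_0\divides n_1\divides\cdots$ with $\lcm(n_i)=\nn$ and each $V_i$ simple over $\M[n_i](F)$, and take $\Sigma=\set{n_i\suchthat i\ge 1}$. This $\Sigma$ is cofinal among the divisors of $\nn$, because the $n_i$ form a divisibility chain whose least common multiple is $\nn$, so any finite divisor already divides some $n_i$. Any $x\in\alg V$ arrives from some $V_\ell$, hence lies in every $V_i$ with $i\ge\ell$; for such $i$, simplicity of $V_i$ forces $\M[n_i](F)x=V_i$, so $\ann_{\M[n_i](F)}(x)$ has co-dimension~$1$. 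For the converse, given a cofinal $\Sigma$ with the stated property, I would first extract a cofinal chain $m_1\divides m_2\divides\cdots$ inside it (the finite divisors of $\nn$ form a directed poset carrying a countable cofinal chain, in which $\Sigma$ is cofinal). Fixing any $0\neq x_0\in\alg V$, the submodules $\M[m_j](F)x_0$ are simple by hypothesis and nested since $m_j\divides m_{j+1}$, so $\M[\nn](F)x_0=\dlim[j]\M[m_j](F)x_0$ is a direct limit of simple finite-dimensional modules over $\M[\nn](F)=\dlim[j]\M[m_j](F)$, hence locally simple and in particular simple. Running this over all $x_0$ shows that every cyclic submodule of $\alg V$ is simple, which makes $\alg V$ semisimple; and since every simple $\M[\nn](F)$-module is faithful (the ring being simple) it is at least two-dimensional over its endomorphism division ring, so by the Jacobson density theorem a direct sum of two or more simple modules always contains a non-simple cyclic submodule. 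Hence $\alg V$ has a single simple constituent, $\alg V=\M[\nn](F)x_0$, and we have exhibited it as a direct limit of simple pieces.

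The step I expect to be the main obstacle is reconciling ``for every $x$ and every $n$ in the subset'' with the geometry of the chain: even for a locally simple $\alg V$ there are entangled tensors lying deep in the chain whose orbit under the smaller matrix subalgebras is not simple, so the co-dimension-$1$ condition genuinely holds only for the $n\in\Sigma$ that are large relative to the given $x$. This is why, in the converse, one should not try to align an arbitrary presentation of $\alg V$ with $\Sigma$, but rather regenerate $\alg V$ from a single cyclic submodule and use that $\alg V$ is forced to be simple; pinning down the ``large relative to $x$'' clause, together with the semisimplicity and density arguments that collapse the possibly many simple constituents into one, is where the real work lies.
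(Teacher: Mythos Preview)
The paper states this corollary without proof, so there is no argument to compare against directly. Your approach is sound, and you have correctly put your finger on the main difficulty: the literal quantifier ``for every $x$ and every $n$ in this subset'' cannot hold in the forward direction, because an entangled tensor $x\in V_\ell$ will have $\M[n_i](F)x$ of length $>1$ for $i<\ell$. Your reading ``for every $x$, for all sufficiently large $n$ in $\Sigma$'' is the one that makes the statement true, and your proof works under it.

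Your converse argument is correct but more elaborate than strictly necessary for the \emph{literal} reading. If one assumes the condition holds for \emph{all} $n\in\Sigma$ and all $x$, then after refining the presentation $\alg V=\dlim V_i$ so that $k_i\in\Sigma$ (using cofinality of both $\Sigma$ and the $k_i$), every nonzero $x\in V_i$ has $\M[k_i](F)x$ simple; since $V_i$ is finite-dimensional over $\M[k_i](F)$ it is semisimple, and two independent vectors in distinct summands would give a cyclic submodule of length~$2$, so $V_i$ is already simple. This is presumably what the paper has in mind as a ``corollary''. Under your (correct) weaker reading, however, this shortcut is unavailable, and your route through semisimplicity of $\alg V$ and the observation that $\dim_D S\ge 2$ for every simple $\M[\nn](F)$-module $S$ (since $\M[\nn](F)$ has nontrivial idempotents and hence cannot embed in a division ring) is exactly what is needed to force $\alg V$ to be a single simple constituent. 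One small remark: you invoke Jacobson density, but all you actually use is Schur's lemma together with $\dim_D S\ge 2$; density itself plays no role.
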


We construct a simple module over $\M[p^{\infty}](F)$ which is not locally simple. The idea is based upon the observation that if $\alg{V} = \dlim (V_n,f_n)$ is chosen so that for every $0 \neq x \in V_n$ we have that $f_n(V_n) \sub A_{n+1}f_n(x)$, then $\alg{V}$ is simple, and that this is a weaker assumption than $V_n$ being simple over $A_n$ (or $V_{n+1}$ being simple over $A_{n+1}$).


Fix a parameter $\ell$.

\begin{defn}
	Let $W$ be an arbitrary vector space.
	We define the rank of an element in $W \tensor F^\ell$ by $$\rank(\sum x_i \tensor e_i) = \dim \span\set{x_1,\dots,x_\ell}$$
	where $e_1,\dots,e_\ell$ is the natural basis of $F^\ell$.
\end{defn}

This will be used in the following context. Let $W$ be a vector space. Notice that $W \tensor F^\ell$ is a module over $\End(W)$ by the action $a(t\tensor e) = at \tensor e$.
\begin{prop}\label{p-trans}
	Let $x \in W \tensor F^\ell$. If $\rank(x) = \ell$ then $\End(W)\cdot x = W \tensor F^\ell$.
\end{prop}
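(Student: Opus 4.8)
The plan is to show that the $\End(W)$-orbit of $x$ is everything by first reducing to the case where $x$ has the especially clean form $\sum_{i=1}^{\ell} w_i \tensor e_i$ with $w_1,\dots,w_\ell$ linearly independent, and then producing, for an arbitrary target $y = \sum_{i=1}^{\ell} u_i \tensor e_i$, an operator $a \in \End(W)$ with $a x = y$. Since $\rank(x) = \ell$ means exactly that the coefficient vectors of $x$ against the basis $e_1,\dots,e_\ell$ span an $\ell$-dimensional subspace of $W$, write $x = \sum_i w_i \tensor e_i$ with $\set{w_1,\dots,w_\ell}$ linearly independent; this is already the clean form, so no genuine reduction is needed — I just record this rephrasing of the hypothesis.

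Next I would use linear algebra in $W$: since $w_1,\dots,w_\ell$ are linearly independent, there is a linear map $a \in \End(W)$ sending $w_i \mapsto u_i$ for each $i$ (extend a partial map defined on a complement arbitrarily, e.g.\ by zero). Then by the definition of the action, $a x = \sum_i (a w_i) \tensor e_i = \sum_i u_i \tensor e_i = y$. Since $y \in W \tensor F^\ell$ was arbitrary, this shows $\End(W) \cdot x = W \tensor F^\ell$, which is the claim. The one point requiring a word of care is the existence of the linear map $a$: this is the standard fact that a linearly independent set can be completed to a basis and a linear map can be prescribed freely on a basis, valid for vector spaces of arbitrary dimension over $F$ (using a Hamel basis / Zorn's lemma if $\dim W$ is infinite).

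I do not anticipate a serious obstacle; the statement is essentially the observation that a rank-$\ell$ tensor "uses up" enough independent directions in $W$ to be moved onto any other element by a single endomorphism. The mild subtlety is simply that $W$ may be infinite dimensional, so I should phrase the extension-of-linear-maps step in a way that does not secretly assume finite dimension — but there is nothing deep there. If one prefers to avoid Zorn explicitly, one can instead note that $w_1,\dots,w_\ell$ lie in a finite dimensional subspace $W_0 \sub W$ together with all the $u_i$, pick any projection $W \to W_0$, and define $a$ as the composite of this projection with the (finite dimensional) map $W_0 \to W_0$ sending $w_i \mapsto u_i$ extended over a complement of $\span\set{w_i}$ inside $W_0$; this keeps the argument entirely within finite dimensional linear algebra.
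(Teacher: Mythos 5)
Your proposal is correct and follows the same argument as the paper: write $x = \sum_i w_i \tensor e_i$ with the $w_i$ linearly independent (which is what $\rank(x)=\ell$ means) and use an endomorphism of $W$ prescribing arbitrary images on the $w_i$. The extra care you take about extending linear maps when $\dim W$ is infinite is a reasonable elaboration but does not change the argument.
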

\begin{proof}
	Write $x = \sum x_i \tensor e_i$ for $x_i \in W$. By assumption the $x_i$ are linearly independent, so there is $a \in \End(W)$ mapping them to any suitable vectors.
\end{proof}

\begin{prop}\label{46-15}
	Let $T \co W \ra U$ be a linear transformation. For every $x \in W \tensor F^\ell$, $\rank((T\tensor 1)(x)) \leq \rank(x)$.
\end{prop}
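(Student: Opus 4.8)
The plan is to unwind the definition of rank and use the elementary fact that a linear transformation cannot increase the dimension of the span of a finite set of vectors. Concretely, I would write $x = \sum_{i=1}^{\ell} x_i \tensor e_i$ with $x_i \in W$, so that by definition $\rank(x) = \dim \span\set{x_1,\dots,x_\ell}$. Then $(T \tensor 1)(x) = \sum_{i=1}^{\ell} T(x_i) \tensor e_i$, and hence $\rank((T\tensor 1)(x)) = \dim \span\set{T(x_1),\dots,T(x_\ell)}$, again straight from the definition (the expansion of $(T\tensor 1)(x)$ along the basis $e_1,\dots,e_\ell$ of $F^\ell$ has coefficients $T(x_i)$).

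It then remains to observe that $\span\set{T(x_1),\dots,T(x_\ell)} = T(\span\set{x_1,\dots,x_\ell})$, since $T$ is linear; and that applying any linear map to a subspace cannot increase its dimension, because $T$ restricted to $\span\set{x_1,\dots,x_\ell}$ is a surjection onto $T(\span\set{x_1,\dots,x_\ell})$, so the rank--nullity inequality gives $\dim T(\span\set{x_1,\dots,x_\ell}) \leq \dim \span\set{x_1,\dots,x_\ell}$. Combining the two displayed identities with this inequality yields $\rank((T\tensor 1)(x)) \leq \rank(x)$, as desired.

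There is essentially no obstacle here: the only point requiring a word of care is the well-definedness of the expansion, i.e.\ that the coefficient vectors $T(x_i)$ are exactly the images of the coefficient vectors $x_i$ — but this is immediate once one fixes the basis $e_1,\dots,e_\ell$ of $F^\ell$ used in the definition of $\rank$, since $T \tensor 1$ acts as $T$ on the first factor and as the identity on the second. I would keep the proof to two or three lines along exactly these lines.
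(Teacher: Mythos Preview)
Your proposal is correct and matches the paper's proof essentially line for line: the paper writes $x = \sum_i x_i \tensor e_i$, sets $X = \span\set{x_1,\dots,x_\ell}$, computes $(T\tensor 1)(x) = \sum_i T(x_i)\tensor e_i$, and concludes $\rank((T\tensor 1)(x)) = \dim T(X) \leq \dim X = \rank(x)$. Your additional remarks about well-definedness of the coefficients and the rank--nullity justification are fine but more than the paper bothers to say.
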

\begin{proof}
	Write $x = \sum_i x_i \tensor e_i$ and let $X = \span\set{x_1,\dots,x_\ell} \sub W$.
	By definition $\rank(x) = \dim X$, so that
	$$\rank((T\tensor 1)(x)) = \rank(\sum_i T(x_i)\tensor e_i) = \dim \span\set{T(x_1),\dots,T(x_\ell)} = \dim T(X) \leq \dim(X).$$ 
\end{proof}

Let $$\phi \co F^\ell \ra F^p \tensor F^\ell$$ be a vector space mapping, such that the image of every nonzero element has rank $\ell$. Necessarily, $\ell \leq p$.
\begin{exmpl}
	\begin{enumerate}
		\item If $E/F$ is a field extension of dimension $\ell$, then the embedding $E \hra \M[\ell](F) \hra F^p \tensor F^{\ell}$ defines a map with the desired properties.
		\item If $2\ell-1 \leq p$, the map can be defined by $\(\begin{matrix}\alpha_1 \\ \vdots \\ \alpha_{\ell}\end{matrix}\)
		\mapsto
		\(\begin{matrix}\alpha_1 & 0 & 0 & \cdots & 0 \\ \alpha_2 & \alpha_1 & 0 & \cdots & 0 \\ \alpha_3 & \alpha_2 & \alpha_1 &  \cdots & 0
			\\ \cdots & \cdots & \cdots & \cdots & \cdots
			\\ \alpha_{\ell} & \alpha_{\ell-1} & \alpha_{\ell-2} & \cdots & \alpha_1
			\\ 0 & \alpha_{\ell} & \alpha_{\ell-1} & \cdots & \alpha_2
			\\ 0 & 0 & \alpha_{\ell}  & \cdots & \alpha_3
			\\ \cdots & \cdots & \cdots & \cdots & \cdots
			\\ 0 & 0 & 0  & \cdots & \alpha_{\ell}
		\end{matrix}\)$.
	\end{enumerate}
\end{exmpl}

\begin{rem}
	The condition $2\ell-1 \leq p$ can always be obtained by replacing $p$ by a power $p^t$, as $\M[p^{\infty}](F) = \M[(p^t)^{\infty}](F)$.
\end{rem}

Consider the map $\ \id \tensor \phi \co W \tensor F^\ell \ra W \tensor (F^p \tensor F^\ell)$.
\begin{prop}\label{42:14}
	For every $0 \neq x  \in W\tensor F^\ell$, $\rank((\id \tensor \phi)(x)) = \ell$.
\end{prop}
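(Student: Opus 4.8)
The plan is to trace how the rank is affected by the two tensor factors separately. Fix $0 \neq x \in W \tensor F^\ell$ and write $x = \sum_{i=1}^\ell x_i \tensor e_i$ with $x_i \in W$. Set $X = \span\set{x_1,\dots,x_\ell}$, so $\rank(x) = \dim X$. The point is that applying $\id \tensor \phi$ is, up to reindexing the tensor factors, the same as applying $\phi$ to a vector living in $X \tensor F^\ell$ (viewing $X$ as the ambient vector space in place of $F^\ell$'s usual partner), and $\phi$ was chosen precisely so that nonzero elements are sent to rank-$\ell$ elements. More concretely, I would argue as follows.

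First I would record the trivial upper bound: $\rank((\id \tensor \phi)(x)) \leq \ell$, simply because $(\id \tensor \phi)(x)$ lives in $(W \tensor F^p) \tensor F^\ell$, so it is a sum of $\ell$ simple tensors against the basis of $F^\ell$, hence its rank (the dimension of the span of the first-factor components) is at most $\ell$. This matches the statement of \Pref{46-15} with $T$ a suitable map, but it is elementary enough to state directly.

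For the lower bound, the key step is to reduce to the defining property of $\phi$. Since $x \neq 0$, at least one $x_i$ is nonzero, so $d := \dim X \geq 1$; pick a basis $w_1,\dots,w_d$ of $X$ and write $x_i = \sum_{j=1}^d c_{ij} w_j$. Then $x = \sum_j w_j \tensor (\sum_i c_{ij} e_i) = \sum_j w_j \tensor v_j$ where $v_j = \sum_i c_{ij} e_i \in F^\ell$, and the $v_j$ span a $d$-dimensional subspace of $F^\ell$ (in fact we may choose the $w_j$ so that the $v_j$ are linearly independent, since $X$ and $\span\set{v_j}$ have equal dimension $d$). Now $(\id \tensor \phi)(x) = \sum_j w_j \tensor \phi(v_j)$, and by hypothesis each $\phi(v_j)$ has rank $\ell$ as an element of $F^p \tensor F^\ell$ (using $v_j \neq 0$). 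Writing $\phi(v_j) = \sum_{k=1}^\ell z_{jk} \tensor e_k$ with $z_{jk} \in F^p$, we get $(\id \tensor \phi)(x) = \sum_{k=1}^\ell \left( \sum_j w_j \tensor z_{jk} \right) \tensor e_k$, so $\rank((\id \tensor \phi)(x)) = \dim \span_k \set{ \sum_j w_j \tensor z_{jk} }$. Since the $w_j$ are linearly independent in $W$, the map $F^p \tensor (\text{span of } e_k\text{'s}) \to W \tensor F^p$ sending $z \mapsto \sum_j w_j \tensor z_j$ (applied coordinatewise in the $e_k$ index) is injective on the relevant subspace; concretely, the $\ell$ vectors $\sum_j w_j \tensor z_{jk}$ ($k = 1,\dots,\ell$) are linearly independent in $W \tensor F^p$ precisely when the $\ell$ vectors $(z_{1k},\dots,z_{dk})$ ($k=1,\dots,\ell$) are linearly independent in $(F^p)^d$, and the latter is exactly the statement that $\rank(\phi(v_j)) = \ell$ for each $j$ combined — more care is needed here, so this is the step I expect to be the main obstacle.

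The subtlety in the last step is that having $\rank(\phi(v_j)) = \ell$ for each individual $j$ does not immediately give $\ell$ linearly independent combined vectors; what one really uses is that $\phi$ restricted to $\span\set{v_j}$ is injective with every nonzero image of rank $\ell$, together with linear independence of the $w_j$. I would handle this by applying \Pref{46-15} in the contrapositive direction: if $\rank((\id\tensor\phi)(x)) = r < \ell$, I want to produce a nonzero $v$ in the span of the $v_j$ with $\rank(\phi(v)) < \ell$, contradicting the hypothesis on $\phi$. To do this, note $(\id\tensor\phi)(x)$ lies in $X \tensor F^p \tensor F^\ell$ (as all $z_{jk} \in F^p$ and the $w_j$ span $X$), and choose a linear functional $\xi$ on $X$; then $(\xi \tensor \id \tensor \id)(\id \tensor \phi)(x) = \phi(\sum_j \xi(w_j) v_j)$, and by \Pref{46-15} its rank is $\leq r$. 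Ranging $\xi$ over a basis of $X^*$ and using that $\sum_j \xi(w_j) v_j$ is a nonzero element of $\span\set{v_j}$ for suitable $\xi$ (since the $w_j$ are a basis and the $v_j$ are independent), we get $\rank(\phi(v)) \leq r < \ell$ for some nonzero $v$, the desired contradiction. Hence $\rank((\id\tensor\phi)(x)) = \ell$.
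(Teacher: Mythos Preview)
Your argument is correct and ultimately rests on the same idea as the paper's proof: apply a linear functional on $W$ (your $\xi$) to reduce to the defining property of $\phi$, then invoke \Pref{46-15} to bound the rank from below. The paper's version is more direct---it chooses a single functional $\ell \co W \to F$ with $(\ell \tensor 1)(x) \neq 0$ at the outset and concludes immediately from the commutativity $(\ell \tensor 1 \tensor 1)\circ(\id\tensor\phi) = \phi\circ(\ell\tensor 1)$, bypassing your detour through the basis $w_j$, the vectors $v_j$, the abandoned first attempt, and the contrapositive framing.
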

\begin{proof}
	Write $x = \sum_i x_i \tensor e_i$, $x_i \in W$. Since some $x_i$ are nonzero, there is a functional
	$\ell \co W \ra F$ such that  $\sum \ell(x_i)e_i = (\ell \tensor 1)(x) \neq 0$. By assumption,
	$\phi(\sum \ell(x_i)e_i) \in F^p \tensor F^\ell$ has rank~$\ell$. The following diagram commutes:
	$$\xymatrix@C=40pt{
		W \tensor F^{\ell} \ar@{->}[r]^{1 \tensor \phi} \ar@{->}[d]^{\ell \tensor 1} & W \tensor F^p \tensor F^\ell \ar@{->}[d]^{\ell \tensor 1 \tensor 1} \\
		F^{\ell}\ar@{->}[r]^{\phi} & F^p \tensor F^{\ell}
	}
	$$
	Now, by \Pref{46-15},
	$$\ell = \rank(
	(\phi \circ (\ell \tensor 1))
	(x)) = \rank(
	((\ell \tensor 1 \tensor 1) \circ (1\tensor \phi))
	(x)) \leq
	\rank(
	(1\tensor \phi)(x)).$$

\end{proof}

\begin{thm}
	Let $A_n = \M[p](F)^{\tensor n}$ and let $V_n = (F^p)^{\tensor n} \tensor F^\ell$ be an $A_n$-module, with the action $a \cdot (t \tensor e) = at \tensor e$.
	
	Let $f_n \co V_n \ra V_{n+1}$ be the map $f_n = \id_n \tensor \phi$, where $\id_n$ is the identity on $(F^p)^{\tensor n}$.
	
	Then $\alg{V} = \dlim(V_n,f_n)$ is a simple module over $\alg{A} = \dlim A_n =\M[p^{\infty}](F)$, which is not locally simple.
\end{thm}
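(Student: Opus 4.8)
For simplicity, the plan is to use the observation made just before the statement: $\alg V=\dlim(V_n,f_n)$ is simple as soon as $f_n(V_n)\subseteq A_{n+1}\cdot f_n(x)$ for every $0\ne x\in V_n$. I would in fact prove the stronger equality $A_{n+1}\cdot f_n(x)=V_{n+1}$. This is immediate from the two preceding propositions: writing $W=(F^p)^{\otimes n}$, \Pref{42:14} gives $\rank\bigl((\id\otimes\phi)(x)\bigr)=\ell$, i.e. $\rank(f_n(x))=\ell$, and then \Pref{p-trans} applied with $W'=(F^p)^{\otimes(n+1)}$ and $\End(W')=A_{n+1}$ yields $A_{n+1}\cdot f_n(x)=W'\otimes F^\ell=V_{n+1}$. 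Transitivity of the $\alg A$-action then follows by the routine direct-limit argument: given nonzero $v,w\in\alg V$, represent both in a common $V_N$ (since $f_n=\id_n\otimes\phi$ is injective, the representative $v_N$ of $v$ is nonzero), pass to $V_{N+1}$, and use $A_{N+1}\cdot f_N(v_N)=V_{N+1}\ni f_N(w_N)$ to find $a\in A_{N+1}\subseteq\alg A$ with $a\cdot v=w$. Hence $\alg A v=\alg V$ for every $0\ne v$, so $\alg V$ is simple. (That $\alg V$ is genuinely an $\alg A$-module follows from \Rref{ldsr}, the $f_n$ being compatible because the $A_n$-action lives on the $(F^p)^{\otimes n}$-tensorand and $\phi$ touches only the $F^\ell$-tensorand.)

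For the failure of local simplicity I would exploit the same two propositions in the opposite direction, to show that \emph{every late layer stays $\ell$-fold}. Fix $0\ne x\in\alg V$ and let $m_0$ be the least index at which $x$ has a representative; this representative is nonzero, hence so is the representative $x_{c-1}\in V_{c-1}$ for each $c\ge m_0+1$, so $x_c=f_{c-1}(x_{c-1})$ with $x_{c-1}\ne 0$. By \Pref{42:14} and \Pref{p-trans} again, $A_c\cdot x=A_c\cdot x_c=V_c$; but as an $A_c$-module $V_c=(F^p)^{\otimes c}\otimes F^\ell$ is a direct sum of $\ell$ copies of the unique simple $A_c$-module, hence \emph{not} simple once $\ell\ge 2$. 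So for every cofinal set $S$ of divisors of $p^\infty$ and every nonzero $x\in\alg V$, the cyclic module $\M[p^c](F)\cdot x\isom V_c$ is non-simple for all large $p^c\in S$, i.e. $\ann_{\M[p^c](F)}(x)$ is not of codimension $1$; thus the criterion of \Cref{lsm} is violated for every cofinal $S$, and $\alg V$ is not locally simple.

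One point to flag: the argument for the second assertion uses $\ell\ge 2$, which must be the standing assumption here, since for $\ell=1$ each $V_n=(F^p)^{\otimes n}$ is already a simple $A_n$-module and $\alg V$ is then locally simple (it is one of the modules of \Tref{alldsr}).

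The real content of the proof, in both halves, is \Pref{42:14}: the requirement that $\phi$ carry \emph{every} nonzero vector to a vector of full rank $\ell$ is exactly what forces each transition map to spread an arbitrary nonzero element across an entire simple block — giving simplicity of the limit — while simultaneously keeping each finite layer a sum of $\ell$ copies of the simple module. I expect the only delicate step to be the invocation of \Cref{lsm}: one must observe that the obstruction is present not just for the displayed presentation but for every cofinal refinement of it, and this is automatic precisely because "$A_c\cdot x=V_c$ for $c$ large" holds for \emph{all} nonzero $x$ at once, so no passage to a cofinal subsystem can make the layers simple.
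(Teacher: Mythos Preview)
Your proof is correct and follows essentially the same route as the paper: verify the compatibility of the $f_n$ via \Rref{ldsr}, deduce simplicity from \Pref{42:14} together with \Pref{p-trans} to get $A_{n+1}\cdot f_n(x)=V_{n+1}$, and rule out local simplicity by observing that $A_c\cdot x\cong V_c$ is $\ell$ copies of the simple $A_c$-module, so the annihilator criterion of \Cref{lsm} fails. Your explicit remarks that $\ell\geq 2$ is needed and that the obstruction persists along every cofinal subsystem are points the paper leaves implicit.
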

\begin{proof}
	To show that $\alg{V}$ is a module we verify the condition $f_n(a\cdot y) = \varphi_n(a)\cdot f_n(y)$ of \Rref{ldsr} (for $ a\in A_n$ and $y \in V_n$), where $\varphi_n(a) = a \tensor 1$. We take $y = x \tensor \alpha$ where $y \in (F^p)^{\tensor n}$ and $\alpha \in F^\ell$. By definition we have that
	$$f_n(a\cdot y) = (\id \tensor \phi)(ax \tensor \alpha) = ax \tensor \phi(\alpha)
	= (a\tensor 1) \cdot (x \tensor \phi(\alpha))= \varphi_n(a) \cdot f_n(y).$$
	
	To prove that $\alg{V}$ is simple, we observe that for every $0 \neq x \in V_n$, $f_n(x) \in (F^p)^{\tensor(n+1)} \tensor F^\ell$ has rank~$\ell$ by \Pref{42:14}. Since $A_{n+1} = \End((F^p)^{\tensor(n+1)})$, $A_{n+1}f_n(x) = (F^p)^{\tensor(n+1)} \tensor F^\ell = V_{n+1}$ by \Pref{p-trans}. Therefore $\alg{A}x = \alg{V}$.
	
	For a similar reason $\alg{V}$ cannot be locally simple, as the co-dimension of $(\ann_{A_n}(x))$ is at least~$\ell$, for any $x\in \alg{V}$, whereas this dimension would be $1$ for a locally simple module by \Cref{lsm}.
\end{proof}

\begin{rem}
	The same construction works for every supernatural number $\nn$ (taking $\phi_n \co F^\ell \ra F^{p_n} \tensor F^{\ell}$ instead of a fixed $\phi$, with $\nn = \prod p_n$).
	
	Therefore, every algebra of supernatural matrices has a simple module which is not the limit of simple modules.
\end{rem}

We sum up some connections between basic classes of modules: (Limits here are in the category of pairs, (ring, module), and the presentation of $A = \M[\nn](F)$ as a limit over $\omega$ is fixed.)
$$\xymatrix{
	{} & \dlim_{\omega}\mbox{simple} \ar@{->}[d] & {} \\
	{} & \mbox{simple} \ar@{->}[d]  \ar@<-2ex>@{->}[u]|(0.4){-}_{\mbox{\tiny{fin. vectors}}}
	& {} \\
	{} & \mbox{cyclic = f.g. + $\dlim_{\omega}$\mbox{cyclic}} \ar@{->}[dl] \ar@{->}[dr]
	\ar@<-2ex>@{->}[u]|(0.4){-}_{\mbox{\tiny{$A$}}} & {} \\
	\dlim_{\omega}\mbox{cyclic} \ar@{->}[dr] \ar@<0.5ex>@{-->}[rr]|(0.5){\not{}}^{\mbox{\tiny fin. matrices}}
	\ar@<-0.5ex>@{<--}[rr]|(0.5){\not{}}_{\mbox{\tiny $A^n$}} & {} & \mbox{f.g.} \ar@{->}[dl] \\
	{} & \dlim_{\omega}\mbox{f.g.}  & {} \\	{} & \dlim_{\omega}\mbox{arbitrary} = \mbox{arbitrary}& {}
}$$
\begin{proof}
	We will prove the directions which are not trivial.
	\begin{itemize}
		\item cyclic= f.g+ $\dlim_{\omega}$\mbox{cyclic}:
		
		Let $\alg{V}$ be a cyclic module over $A$. Then obviously $\alg{V}$ is f.g. In addition, there is some $x\in \alg{V}$ such that $\alg{V}= Ax$. So $\alg{V}=\dlim_{\omega}M_n(F)x$ a direct limit of cyclic modules over $M_n(F)$.
		
		On the other hand, assume that $\alg{V}$ is f.g. + $\dlim_{\omega}$\mbox{cyclic}. Denote by $\{x_1,...,x_n\}$ a finite set of generators. There is some $n$ such that $x_1,...,x_n\in V_n$. $V_n$ is cyclic so it can by generated by some $x$. Thus, $V$ is generated by $x$.
		\item  $\dlim_{\omega}\mbox{cyclic}\nrightarrow\mbox{f.g.}$: 
		
		Let $\alg{V}_n=M_n(F)$ with the embeddings: $a\to a\oplus 0$. Then $V_n$ is clearly a cyclic module over $M_n(F)$ and $V= \dlim_{\omega}V_n$ is a module over $M_{\nn}(F)$ consists of finite matrices. $V$ cannot be finitely generated, because that the ranks of matrices in $V$ would be bounded by the ranks of the generators. But clearly the rank are unbounded.
		\item  $\dlim_{\omega}\mbox{cyclic}\nleftarrow\mbox{f.g.}$
		
		Take $A^n$ for some $n>1$. This is clearly a f.g module. If it was a direct limit of cyclic modules then by the first equality we would get that it is cyclic itself. 	The fact that the free module $A^n$ (for any $n>1$) is not cyclic follows from $\M[\nn](F)$ being Dedekind-finite, by \Rref{limIBN}.  
	\end{itemize}
\end{proof}

\begin{rem}\label{limIBN}
	The direct limit of IBN rings is IBN.
\end{rem}
\begin{proof}
	Let $A = \dlim A_i$, where each $A_i$ is IBN. Suppose $A^n=A^m$. Take generators $x_r$ for the left-hand side, and $y_s$ for the right-hand side. Each $x_r$ can be expressed as a linear combination of the $y_s$, and for $i$ large enough, the coefficients all come from $A_i$, so $\sum_r A_i x_r \sub \sum_s A_i y_s$. For the same reason, if $i$ is large enough, we have an equality. But then, because of linear independence, $n=m$ (since this $A_i$ is IBN).
\end{proof}

\ifCM 

\else 

\fi 
\end{document}